%The source code in LaTeX for this paper was written by
%Agnieszka Dutka, agnieszka.dutka [at] gmail.com
\documentclass[10pt, a4paper, leqno]{article}
\usepackage[margin=3cm]{geometry}
\usepackage[utf8]{inputenc} %---latin2, cp1250
\usepackage{lmodern}
\usepackage[T1]{fontenc}
\usepackage[english]{babel}

\usepackage[runin]{abstract}
\usepackage{titling}

\usepackage{amsmath}
\usepackage{amsthm}
\usepackage{amsfonts}
\usepackage{amssymb}
\usepackage{rsfso}%---for math calligraphic letters
\usepackage{mathtools}
\usepackage{clrscode}
\usepackage{enumitem}
\usepackage{mdwlist}

\usepackage[nocompress]{cite}%---for extra cite features

%---abstract
\setlength{\abstitleskip}{-\parindent}
\setlength{\absleftindent}{0pt}
\setlength{\absrightindent}{0pt}
\abslabeldelim{.}
%---abstract END

%---keywords abstract-like
\newcommand{\keywordsname}{Key words}
\makeatletter
\newcommand{\keywords}[1]{%
\def\thekeywords{#1}%
\begin{@bstr@ctlist}
\hspace*{\abstitleskip}{\abstractnamefont\keywordsname\@bslabeldelim}\abstracttextfont\
#1%
\par\end{@bstr@ctlist}
}
\makeatother
%---keywords abstract-like END

%---subjclass abstract-like
\newcommand{\subjclassname}{Mathematics subject classification}
\makeatletter
\newcommand{\subjclass}[2][2010]{%
\begin{@bstr@ctlist}
\hspace*{\abstitleskip}{\abstractnamefont\subjclassname\ (#1)\@bslabeldelim}\abstracttextfont\
#2%
\par\end{@bstr@ctlist}
}
\makeatother
%---subjclass abstract-like END

%---\and from latex source, modified
\makeatletter
\def\and{%				%begin{tabular}
	\end{tabular}%
	%\hskip 1em \@plus.17fil%
	and%
	\begin{tabular}[t]{c}}%
					%\end{tabular}
\makeatother
%--\and from latex source, modified END

%---thanks without a footnotemark
\makeatletter
\def\thanks#1{%\footnotemark
\protected@xdef\@thanks{\@thanks
\protect\footnotetext[\the\c@footnote]{#1}}%
}
\makeatother
%---thanks without a footnotemark END

%--address and email amsart-like, modified
\makeatletter
\let\addresses\@empty      %\let\thankses\@empty
\newcommand{\address}[2][]{\g@addto@macro\addresses{\address{#1}{#2}}}
\newcommand{\curraddr}[2][]{\g@addto@macro\addresses{\curraddr{#1}{#2}}}
\newcommand{\email}[2][]{\g@addto@macro\addresses{\email{#1}{#2}}}
\newcommand{\urladdr}[2][]{\g@addto@macro\addresses{\urladdr{#1}{#2}}}
%
%---[...]
%
\def\enddoc@text{%\ifx\@empty\@translators \else\@settranslators\fi
  \ifx\@empty\addresses \else\@setaddresses\fi}
\AtEndDocument{\enddoc@text}
%---[...]
\def\emailaddrname{E-mail address}
%---[...]
\def\@setaddresses{\par
  \nobreak \begingroup
%\footnotesize
%
%---[...]
%
%  \def\\{\unskip, \ignorespaces}%
  \interlinepenalty\@M
  \def\address##1##2{\begingroup%
    \par\addvspace\bigskipamount%\indent
    \@ifnotempty{##1}{(\ignorespaces##1\unskip) }%
    {\noindent\ignorespaces##2}\par\endgroup}%
%
%---[...]
%
  \def\email##1##2{\begingroup
    \@ifnotempty{##2}{\nobreak\noindent\emailaddrname
      \@ifnotempty{##1}{, \ignorespaces##1\unskip}\/:\space
      \ttfamily##2\par}\endgroup}%
%
%---[...]
%
  \addresses
  \endgroup
}

\makeatother
%---address and email amsart-like, modified END

%---subsections numbered with letters

%---subsections numbered with letters END

%---enumerate-like lists

\makeatletter
\def\cstar#1{\expandafter\@cstar\csname c@#1\endcsname}
\def\@cstar#1{\ifcase#1\or $\ast$\or $\ast\ast$\or $\ast\ast\ast$\fi}
\AddEnumerateCounter{\cstar}{\@cstar}{$\ast\ast\ast$}
\makeatother

\newlist{conditions}{enumerate}{1}
\newlist{iconditions}{enumerate}{1}
\setlist[conditions]{label=\normalfont(\alph*),ref=\normalfont\alph*}
\setlist[iconditions]{label=\normalfont(\roman*),ref=\normalfont\roman*}

%---enumerate-like lists END

%---math commands and environments

%---hyphen in math mode
\mathchardef\mhyphen="2D
%---hyphen in math mode END

\newcommand{\CB}{\mathbb{C}}
\newcommand{\F}{\mathbb{F}}
\newcommand{\G}{\mathbb{G}}
\newcommand{\HB}{\mathbb{H}}
\newcommand{\PB}{\mathbb{P}}
\newcommand{\R}{\mathbb{R}}
\newcommand{\SB}{\mathbb{S}}
\newcommand{\T}{\mathbb{T}}
\newcommand{\Z}{\mathbb{Z}}

\newcommand{\C}{\mathcal{C}}
\newcommand{\FC}{\mathcal{F}}
\newcommand{\PC}{\mathcal{P}}
\newcommand{\RC}{\mathcal{R}}
\newcommand{\SC}{\mathcal{S}}
\newcommand{\TC}{\mathcal{T}}
\newcommand{\UC}{\mathcal{U}}

\newcommand{\Halg}{H_{\mathrm{alg}}}
\newcommand{\Mat}{\func{Mat}}
\newcommand{\PT}{\func{PT}}

\newcommand{\codim}{\func{codim}}
\newcommand{\Int}{\func{int}}
\newcommand{\Sing}{\func{Sing}}

\newtheorem{theorem}{Theorem}[section]
\newtheorem{corollary}[theorem]{Corollary}
\newtheorem{proposition}[theorem]{Proposition}
\newtheorem{lemma}[theorem]{Lemma}

\theoremstyle{definition}
\newtheorem{definition}[theorem]{Definition}
\newtheorem{example}[theorem]{Example}
\newtheorem{notation}[theorem]{Notation}
\newtheorem{problem}[theorem]{Problem}
\newtheorem{remark}[theorem]{Remark}

%---function restriction
\newcommand{\restr}[2]{#1|_{#2}}
%---function restriction END

%---abs and norm
\DeclarePairedDelimiter\abs{\lvert}{\rvert}%
\DeclarePairedDelimiter\norm{\lVert}{\rVert}%

% Swap the definition of \abs* and \norm*, so that \abs
% and \norm resizes the size of the brackets, and the 
% starred version does not.
\makeatletter
\let\oldabs\abs
\def\abs{\@ifstar{\oldabs}{\oldabs*}}
\let\oldnorm\norm
\def\norm{\@ifstar{\oldnorm}{\oldnorm*}}
\makeatother
%---abs and norm END

%---math commands and environments END

\usepackage[pdftex]{hyperref}

\title{\texorpdfstring{\bf}{}Piecewise-regular maps}
\date{}
\author{Wojciech Kucharz\thanks{The author was partially supported by
National Science Centre (Poland) under grant number 2014/15/B/ST1/00046.}}

\address{Institute of Mathematics\\Faculty of Mathematics and Computer
Science\\Jagiellonian University\\\L{}ojasiewicza 6\\30-348
Krak\'ow\\Poland}
\email{Wojciech.Kucharz@im.uj.edu.pl}

\hypersetup{pdfauthor={\theauthor}, pdftitle={\thetitle}}

\begin{document}
\maketitle
\thispagestyle{empty}

\begin{abstract}
Let $V$, $W$ be real algebraic varieties (that is, up to isomorphism,
real algebraic sets), and $X \subseteq V$ some subset. A map from $X$
into $W$ is said to be \emph{regular} if it can be extended to a regular
map defined on some Zariski locally closed subvariety of $V$ that
contains~$X$. Furthermore, a continuous map $f \colon X \to W$ is said
to be \emph{piecewise-regular} if there exists a stratification $\SC$ of
$V$ such that for every stratum $S \in \SC$ the restriction of $f$ to
each connected component of $X \cap S$ is a regular map. By a
stratification of $V$ we mean a finite collection of pairwise disjoint
Zariski locally closed subvarieties whose union is equal to~$V$.
Assuming that the subset $X$ is compact, we prove that every continuous
map from $X$ into a Grassmann variety or a unit sphere can be
approximated by piecewise-regular maps. As an application, we obtain a
variant of the algebraization theorem for topological vector bundles. If
the variety $V$ is compact and nonsingular, we prove that each
continuous map from $V$ into a unit sphere is homotopic to a
piecewise-regular map of class $\C^k$, where $k$ is an arbitrary
nonnegative integer.
\end{abstract}

\keywords{Real algebraic variety, regular map, piecewise-regular map,
approximation, vector bundle, simplicial complex.}
\hypersetup{pdfkeywords={\thekeywords}}
\subjclass{14P05, 14P99, 57R22.}

\section{Introduction}\label{sec-1}

In this paper, by a \emph{real algebraic variety} we mean a locally
ringed space isomorphic to an algebraic subset of $\R^m$, for some $m$,
endowed with the Zariski topology and the sheaf of real-valued regular
functions (such an object is called an affine real algebraic variety in
\cite{bib6}). The class of real algebraic varieties is identical with
the class of quasi-projective real algebraic varieties, cf.
\cite[Proposition~3.2.10, Theorem~3.4.4]{bib6}. Morphisms of real
algebraic varieties are called \emph{regular maps}. Each real algebraic
variety carries also the Euclidean topology, induced by the standard
metric on $\R$. Unless explicitly stated otherwise, we always refer to
the Euclidean topology.

Topological properties of regular maps and their applications to
algebraization of topological vector bundles were investigated in
numerous papers \citeleft \citen{bib2}\citedash\citen{bib14}\citepunct
\citen{bib16}\citedash\citen{bib25}\citepunct \citen{bib27}\citepunct
\citen{bib31}\citedash\citen{bib35}\citepunct
\citen{bib37}\citedash\citen{bib40}\citepunct
\citen{bib44}\citedash\citen{bib46}\citepunct \citen{bib48}\citepunct
\citen{bib50}\citepunct \citen{bib53}\citepunct \citen{bib61}\citepunct
\citen{bib65}\citepunct \citen{bib68}\citepunct \citen{bib69}\citepunct
\citen{bib73}\citedash\citen{bib77}\citepunct \citen{bib79}\citepunct
\citen{bib80}\citeright. In general, regular maps are too rigid to
reflect adequately topological phenomena. It is therefore desirable to
introduce maps which have many good features of regular maps but are
more flexible.

We first generalize the definition of regular map.

\begin{definition}\label{def-1-1}
Let $V$, $W$ be real algebraic varieties, $X \subseteq V$ some subset,
and $Z$ the Zariski closure of $X$ in $V$.

A map $f \colon X \to W$ is said to be \emph{regular} if there
exist a Zariski open neighborhood $Z_0 \subseteq Z$ of~$X$ and a
regular map $\tilde f \colon Z_0 \to W$ such that $\restr{\tilde
f}{X} = f$.
\end{definition}

The next step requires the concept of stratification. By a
\emph{stratification} of a real algebraic variety~$V$ we mean a
finite collection of pairwise disjoint Zariski locally closed
subvarieties (some possibly empty) whose union is equal to $V$.

\begin{definition}\label{def-1-2}
Let $V$, $W$ be real algebraic varieties, $f \colon X \to W$ a
continuous map defined on some subset $X \subseteq V$, and $\SC$
a stratification of $V$.

The map $f$ is said to be \emph{$\SC$-regular} if for every
stratum $S \in \SC$ the restriction of~$f$ to $X \cap S$ is a
regular map. Also, $f$ is said to be \emph{piecewise
$\SC$-regular} if for every stratum $S \in \SC$ the restriction
of $f$ to each connected component of $X \cap S$ is a regular
map.

Moreover, $f$ is said to be \emph{stratified-regular} (resp.\
\emph{piecewise-regular}) if it is $\TC$-regular (resp.\ piecewise
$\TC$-regular) for some stratification $\TC$ of $V$.
\end{definition}

Essentially, these notions do not depend on the ambient variety
$V$. More precisely, suppose that $V$ is a Zariski locally closed
subvariety of a real algebraic variety~$V'$. The map ${f \colon X
\to W}$ is $\SC$-regular (resp.\ piecewise $\SC$-regular) if and
only if it is $\SC'$-regular (resp.\ piecewise $\SC'$-regular),
where $\SC'$ is the stratification of $V'$ defined by ${\SC' = \SC
\cup \{ \overline V \setminus V \} \cup \{ V' \setminus \overline
V \}}$ with $\overline V$ the Zariski closure of $V$ in $V'$.
Conversely, given a stratification $\PC$ of $V'$, the map $f$ is
$\PC$-regular (resp.\ piecewise $\PC$-regular) if and only if it
is $\PC(V)$-regular (resp.\ piecewise $\PC(V)$-regular), where $\PC(V)$ is
the stratification of $V$ defined by $\PC(V) = \{V\cap P \colon
P \in \PC\}$. Thus, in the definition of stratified-regular
(resp.\ piecewise-regular) it does not matter whether $X$ is
regarded as a subset of $V$ or as a subset of $V'$.

Evidently, each stratified-regular map is piecewise-regular,
whereas the converse is not always true. General properties of
these two classes of maps and relationships between them are
discussed in Section~\ref{sec-2}. Stratified-regular maps and
functions are thoroughly investigated, in a more restrictive
framework, in \cite{bib4, bib28, bib29, bib41, bib42, bib47,
bib49, bib51, bib52, bib55, bib57, bib58, bib59, bib63, bib71,
bib72, bib81}, where they sometimes appear under different names
(cf. Remark~\ref{rem-2-5}). If $X$ is a semialgebraic set, then
each piecewise-regular map defined on~$X$ is semialgebraic.
Theorem~\ref{th-2-9} provides a nontrivial characterization of
piecewise-regular maps among semialgebraic ones.

In this section we concentrate on topological properties of
piecewise-regular maps. With notation as in
Definition~\ref{def-1-2}, we let $\C(X,W)$ denote the space of
all continuous maps from $X$ into $W$, endowed with the
compact-open topology. We say that the map $f$ \emph{can be
approximated by piecewise $\SC$-regular maps} if every
neighborhood of $f$ in $\C(X,W)$ contains a piecewise
$\SC$-regular map. Approximation of~$f$ by maps of other types
(regular, $\SC$-regular, stratified-regular, piecewise-regular,
etc.) is defined in the analogous way.

We pay special attention to maps with values in Grassmannians.
We let $\F$ stand for $\R$, $\CB$ or $\HB$ (the quaternions).
When convenient, $\F$ will be identified with $\R^{d(\F)}$, where
$d(\F) = \dim_{\R}\F$. We will consider only left $\F$-vector
spaces, which plays a role if $\F = \HB$ since the quaternions
are noncommutative. For any integers $r$ and $n$, with $0 \leq r
\leq n$, we denote by $\G_r(\F^n)$ the Grassmann space of
$r$-dimensional $\F$-vector subspaces of $\F^n$. As in
\cite[Sections~3.4 and~13.3]{bib6}, $\G_r(\F^n)$ will be regarded
as a real algebraic variety. The disjoint union
\begin{equation*}
\G(\F^n) = \coprod_{r=0}^n \G_r(\F^n)
\end{equation*}
is also a real algebraic variety.

\begin{theorem}\label{th-1-3}
Let $V$ be a real algebraic variety and let $X \subseteq V$ be
a compact subset. Then, for each positive integer $n$, every
continuous map from $X$ into $\G(\F^n)$ can be approximated by
piecewise-regular maps.
\end{theorem}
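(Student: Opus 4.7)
The plan is to reduce to a single Grassmannian, cover it by finitely many principal affine charts, construct a stratification of $V$ such that $f$ maps each connected component of $X \cap S$ into one chart, and then apply Weierstrass approximation coordinate-wise inside the charts.

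First, since $X$ is compact and $\G(\F^n)=\coprod_{r=0}^n\G_r(\F^n)$ has clopen components, the preimages $X_r = f^{-1}(\G_r(\F^n))$ are compact and clopen in $X$, so it suffices to treat a single $f\colon X\to\G_r(\F^n)$. For each $L\in\G_r(\F^n)$, the Zariski open subvariety
\[
U_L = \{L'\in\G_r(\F^n) : L'\cap L^\perp = \{0\}\}
\]
is biregular via an explicit regular isomorphism $\varphi_L$ to $\mathrm{Hom}_\F(L,L^\perp)\cong\F^{r(n-r)}$. Finitely many $U_{L_1},\ldots,U_{L_N}$ cover $\G_r(\F^n)$, and this cover has a positive Lebesgue number $\delta>0$ by compactness.

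Next, using that $\G_r(\F^n)$ is an absolute neighborhood retract, I would extend $f$ to a continuous map $F\colon\Omega\to\G_r(\F^n)$ on an open neighborhood $\Omega$ of $X$ in $V$, and build a stratification $\SC$ of $V$ so that for every $S\in\SC$ and every connected component $C$ of $X\cap S$ the image $F(C)$ has diameter $<\delta$, hence lies in some chart $U_{L_{j(C)}}$. I would obtain $\SC$ as a common refinement of a Zariski stratification of $V$ and a semialgebraic triangulation of a compact neighborhood of $X$ in $V$, subdivided until each cell is $F$-small, appealing to standard refinement results for semialgebraic and algebraic partitions. On each $C$, Weierstrass applied coordinate-wise inside the chart produces a polynomial map $p_C\colon V\to\F^{r(n-r)}$ such that $\varphi_{L_{j(C)}}^{-1}\circ p_C$ is regular on a Zariski open neighborhood of $C$, takes values in $\G_r(\F^n)$, and uniformly approximates $f|_C$. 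Assembling these pieces yields a candidate $g\colon X\to\G_r(\F^n)$ which is piecewise $\SC$-regular by construction.

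The main obstacle will be the \emph{global continuity} of $g$. Pointwise closeness to $f$ does not automatically glue the polynomial pieces continuously at points where several strata accumulate, since distinct polynomial formulas are used on neighboring components of $X \cap S$ and on lower-dimensional strata in their closures. To overcome this I would refine $\SC$ so that around every stratum-boundary point all the relevant charts overlap on a common neighborhood (arranged via the Lebesgue condition) and choose the $p_C$ as restrictions of a small family of global polynomial approximations to a single semialgebraic approximation of $F$; the oscillation of $g$ across any stratum boundary is then controlled by that of $F$ plus the common approximation error, reducing continuity of $g$ to continuity of $F$.
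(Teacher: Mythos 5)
Your proposal correctly identifies the central difficulty and then does not actually resolve it. The gap you flag --- that ``pointwise closeness to $f$ does not automatically glue the polynomial pieces continuously'' --- is precisely the fatal one. A piecewise $\SC$-regular map is \emph{by definition} continuous, so if the pieces $g|_C$ fail to agree exactly on the closures of adjacent strata, you have produced no map at all, let alone a piecewise-regular one. Your proposed repair (Lebesgue number on the chart cover plus ``a small family of global polynomial approximations'') only controls the \emph{oscillation} of $g$ across stratum boundaries; it does not force exact agreement. If a component $C$ uses chart $U_{L_j}$ and a neighboring lower-dimensional component $C'$ in $\overline C$ uses chart $U_{L_{j'}}$, the two regular formulas $\varphi_{L_j}^{-1}\circ p_j$ and $\varphi_{L_{j'}}^{-1}\circ p_{j'}$ will generically be distinct regular maps that only nearly agree on $\overline C\cap C'$. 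Near-agreement is useless here: the assembled $g$ would have jump discontinuities of size comparable to the approximation error, and no amount of refinement makes those zero.

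What is missing is a mechanism for \emph{prescribing} the values of the regular approximant on a lower-dimensional piece and then approximating on the higher-dimensional piece subject to that boundary condition. This is exactly the content of Lemma~\ref{lem-3-6} and Lemma~\ref{lem-3-7} in the paper: given a simplex $\Delta$ and a continuous $f\colon\Delta\to\R$ whose restriction to each facet is already regular, one can find a regular $g\colon\Delta\to\R$ arbitrarily close to $f$ with $g|_{|\dot\Delta|}=f|_{|\dot\Delta|}$ \emph{exactly}. The ingredients there --- extension of a regular function off a simple normal crossing divisor (Lemma~\ref{lem-3-3}), and Weierstrass approximation applied to a function written as a combination of generators of the vanishing ideal of the facets --- have no analogue in your chart-by-chart scheme. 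The paper then runs an induction over the skeleta of a simplicial complex $K$ with $X\subseteq|K|$ (Propositions~\ref{prop-4-3} and~\ref{prop-6-1}): one first approximates on $|K^{(d-1)}|$, then on each $d$-simplex with the boundary values frozen, and the resulting map $\varphi$ glues continuously because the matching is exact. Finally the paper's stratification $\SC_K(V)$ is built from the affine spans of the simplices, so that the connected components of $X\cap S$ sit inside single simplices (Lemma~\ref{lem-4-4}), which is cleaner than your mixed Zariski/semialgebraic refinement. Your reduction to clopen pieces and your tubular-neighborhood extension step are fine, and using $\Mat^0_{n,r}(\F)$ in place of affine charts is also roughly parallel to the paper's use of matrix representations of the tautological bundle, but without a Lemma~\ref{lem-3-7}-type boundary-matching device the argument does not close.
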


Under an additional assumption on $X$, we also have a stronger
result.

\begin{theorem}\label{th-1-4}
Let $V$ be a real algebraic variety and let $X \subseteq V$ be a
compact locally contractible subset. Then there exists a
stratification $\SC$ of $V$ such that, for each positive integer
$n$, every continuous map from $X$ into $\G(\F^n)$ can be
approximated by piecewise $\SC$-regular maps.
\end{theorem}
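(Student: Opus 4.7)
The plan is to strengthen Theorem~\ref{th-1-3} by producing a stratification $\SC$ of $V$ that depends only on the pair $(V,X)$, not on the individual map $f$ nor on $n$. Since $\G(\F^n) = \coprod_r \G_r(\F^n)$ is a disjoint union of clopen pieces and $X$ is compact, each $f$ sends every connected component of $X$ into one of the $\G_r(\F^n)$, so it suffices to approximate maps $f \colon X \to \G_r(\F^n)$ for fixed $r$.

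By a classical theorem of Borsuk, a compact locally contractible metric space is an absolute neighborhood retract. Thus, after embedding $V$ in some $\R^m$, I would choose a compact semialgebraic neighborhood $K$ of $X$ in $V$ together with a continuous retraction $\rho \colon K \to X$. I would then take a semialgebraic triangulation of $K$ and build a Zariski stratification $\SC$ of $V$ subordinate to it, using the Zariski closures of the simplices and their successive differences, with $V \setminus K$ split into further Zariski locally closed pieces; the key design feature is that each connected component of $X \cap S$, for $S \in \SC$, lies inside a single open simplex of the triangulation. By construction, $\SC$ is defined using only $V$ and $X$.

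For a given $f \colon X \to \G_r(\F^n)$, I would form $\hat f = f \circ \rho$ on $K$ and approximate it inductively over the skeleta of the triangulation. Each open simplex is contractible, so the restriction of $\hat f$ to its closure is null-homotopic and can be approximated by a regular map into $\G_r(\F^n)$. At each stage the new regular approximation on a simplex must match, up to small error, the values already constructed on its boundary; this boundary-matching would be performed by composing with a tubular-neighborhood retraction from an ambient matrix space onto $\G_r(\F^n)$ and exploiting the smooth manifold structure of the Grassmannian to absorb a small regular correction. Restricting the glued map to $X$ yields the required piecewise $\SC$-regular approximation of $f$.

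\textbf{Main obstacle.} The crux is this inductive gluing of simplex-wise regular approximations while keeping the stratification genuinely Zariski (not merely simplicial) and the error uniformly small. This requires both density of regular maps from a cell into $\G_r(\F^n)$ in the null-homotopic regime and precise use of the smoothness of the Grassmannian, and it is the compact locally contractible hypothesis on $X$ --- supplying the retraction $\rho$ and hence a geometric framework independent of $f$ and $n$ --- that allows the simplex-wise approximations to be assembled into a single global piecewise $\SC$-regular map approximating $f$.
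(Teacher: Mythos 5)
Your high-level architecture---Borsuk's theorem to make $X$ an ANR, pass to a retraction from a neighborhood, triangulate, approximate inductively over skeleta, and restrict to $X$---is the same as the paper's, and the reduction to fixed $r$ is a correct simplification. However, there is a concrete gap in where you place the triangulation. You propose to take a compact semialgebraic neighborhood $K$ of $X$ \emph{inside} $V$ and to stratify $V$ using the Zariski closures of the semialgebraic ``simplices'' of a semialgebraic triangulation of $K$. But those simplices are merely semialgebraic subsets of $V$, not affine simplices; their Zariski closures are arbitrary subvarieties which can be large and can cut across $X$ in uncontrolled ways, and there is no reason the connected components of $X \cap S$ (for $S$ in your stratification) should lie inside single open simplices. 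The paper avoids this by placing the simplicial complex $K$ in the ambient $\R^m$: $X \subseteq |K| \subseteq U$ with $|K|$ a genuine polyhedron whose simplices $\Delta$ have affine Zariski closures $H_\Delta$, and the induced stratification $\SC_K(V) = \{V \cap S : S \in \SC_K\}$ then satisfies the needed combinatorial property (Lemma~\ref{lem-4-4}) precisely because the $H_\Delta$ are affine subspaces. The retraction $r \colon |K| \to X$ is available for the same Borsuk reason, so nothing is lost by working in $\R^m$ rather than in $V$.

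The second soft spot is your boundary matching ``up to small error.'' The glued map $\varphi$ on $|K|$ must be continuous, so the approximation on a top simplex must agree \emph{exactly} with the previously constructed values on the $(d-1)$-skeleton. The paper gets this in Lemma~\ref{lem-3-7}: given a continuous scalar function on a simplex $\Delta$ whose restriction to each face is regular, it produces a regular function on $\Delta$, $\varepsilon$-close, that agrees exactly with the given function on $|\dot\Delta|$ (via Tietze extension, the ideal of $\C^\infty$ functions vanishing on the bounding hyperplanes being polynomially generated, and Weierstrass). This scalar result is then promoted to $\G_r(\F^n)$ in Lemma~\ref{lem-4-2} through the orthogonal projection $P_f$ and matrix representations. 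Your idea of correcting the error by composing with a tubular-neighborhood retraction of $\G_r(\F^n)$ is the right instinct, but as stated it only realigns the map continuously; you would still need an argument that the correction can be chosen regular on each simplex and can be made to produce exact, not approximate, agreement on the skeleton. That is precisely the nontrivial content that Lemmas~\ref{lem-3-7} and~\ref{lem-4-2} supply.
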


Virtually all topological spaces one encounters in real algebraic
geometry are locally contractible; for example, any semialgebraic
set is locally contractible, cf. \cite[Theorem~9.3.6]{bib6}.

The proofs of Theorems~\ref{th-1-3} and~\ref{th-1-4}, given in
Section~\ref{sec-4}, are based on some fairly explicit
constructions.

It is well-known that maps with values in $\G(\F^n)$ encode
information on algebraic and topological $\F$-vector bundles, cf.
\cite{bib6, bib36}. This is also the case for
stratified-algebraic $\F$-vector bundles introduced in \cite{bib58} and
further
investigated in \cite{bib54, bib56, bib60, bib63}.
Theorems~\ref{th-1-3} and~\ref{th-1-4} have a bearing on
$\F$-vector bundles as well, which is elaborated upon in
Section~\ref{sec-5}. The main results of Section~\ref{sec-5} are
Theorems~\ref{th-5-10} and~\ref{th-5-11}.

We also have an approximation theorem for maps with values in the
unit $n$-sphere
\begin{equation*}
\SB^n = \{(u_0, \ldots, u_n) \in \R^{n+1} \colon u_0^2 + \cdots +
u_n^2 = 1\}.
\end{equation*}

\begin{theorem}\label{th-1-5}
Let $V$ be a real algebraic variety and let $X \subseteq V$ be a
compact subset. Then, for each positive integer $n$, every
continuous map from $X$ into $\SB^n$ can be approximated by
piecewise-regular maps.
\end{theorem}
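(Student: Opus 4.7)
The plan is to reduce Theorem~\ref{th-1-5} to the Grassmannian case of Theorem~\ref{th-1-3} via the regular double cover
\[
\pi\colon\SB^n\to\PB^n=\G_1(\R^{n+1})\subseteq\G(\R^{n+1}),\qquad x\mapsto xx^T.
\]
Given a continuous $f\colon X\to\SB^n$, I would first form $\bar f=\pi\circ f\colon X\to\PB^n$ and apply Theorem~\ref{th-1-3} (with $\F=\R$) to obtain a piecewise-regular map $\bar g\colon X\to\G(\R^{n+1})$ arbitrarily close to $\bar f$. Since $\PB^n$ is open-and-closed in $\G(\R^{n+1})$ in the Euclidean topology, I may assume $\bar g(X)\subseteq\PB^n$.

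Next, I would construct a continuous lift $g\colon X\to\SB^n$ of $\bar g$ approximating $f$. Because $\pi$ is a two-sheeted local homeomorphism, each fibre $\pi^{-1}(\bar g(x))$ is a pair of antipodal unit vectors; provided $\bar g$ is close enough to $\bar f$, exactly one of these lies in the open hemisphere of $\SB^n$ determined by $f(x)$, and this is by definition $g(x)$. The local sheet structure of $\pi$ then ensures $g$ is continuous and close to $f$.

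The main technical step is to verify that $g$ is piecewise-regular. Let $\SC$ be a stratification of $V$ witnessing the piecewise $\SC$-regularity of $\bar g$. For each $S\in\SC$ and each connected component $C$ of $X\cap S$, the restriction $\bar g|_C$ extends to a regular map $\tilde{\bar g}\colon Z_0\to\PB^n$ on some Zariski open neighbourhood $Z_0$ of $C$ in the Zariski closure of $C$ in $V$. The fibre product $\tilde Z_0=Z_0\times_{\PB^n}\SB^n$ is a finite étale double cover of $Z_0$ via the first projection $\pi_1$, together with a regular projection $\pi_2\colon\tilde Z_0\to\SB^n$; the continuous lift $g|_C$ is a continuous section of $\pi_1$ over $C$, picking out one of the two Euclidean connected components of $\pi_1^{-1}(C)$. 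After refining $\SC$ if necessary, this component extends to a Zariski open-and-closed subvariety of $\pi_1^{-1}(Z_0')$ for some Zariski open $Z_0'\subseteq Z_0$ containing $C$, yielding a regular section $s\colon Z_0'\to\tilde Z_0$ whose composition $\pi_2\circ s$ extends $g|_C$ regularly.

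The main obstacle is precisely this last refinement: upgrading the topological choice of sheet of the pullback double cover to an algebraic section on a Zariski open neighbourhood of $C$. The underlying principle is that, over a sufficiently simple (e.g.\ Zariski-irreducible) real algebraic variety, a continuous section of an étale double cover forces the cover to split algebraically---so topological and algebraic sheet decompositions agree after enough refinement---and executing this cleanly requires the standard machinery for decomposing real algebraic varieties into Zariski-irreducible, Zariski locally closed pieces.
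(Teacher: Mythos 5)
The step you flag as ``the main obstacle'' is a genuine gap, and it cannot be closed by refining the stratification: the ``underlying principle'' you appeal to --- that over a Zariski-irreducible real algebraic variety a continuous section of an \'etale double cover forces the cover to split algebraically --- is false. Take $Z_0=\R$ and the regular map $\bar g\colon\R\to\PB^1$, $\bar g(x)=[1:x]$. The pullback of $\pi\colon\SB^1\to\PB^1$ is
\[
\tilde Z_0=\{(x,a)\in\R^2\colon a^2(1+x^2)=1\}\xrightarrow{\ \pi_1\ }\R,\qquad \pi_2(x,a)=(a,xa).
\]
Euclidean-topologically $\tilde Z_0$ is the disjoint union of the two graphs $a=\pm 1/\sqrt{1+x^2}$, so $\pi_1$ is a trivial topological double cover admitting a global continuous section. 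But $a^2(1+x^2)-1$ is irreducible in $\R[x,a]$ (as a quadratic in $a$ over $\R(x)$ its discriminant $4(1+x^2)$ is not a square, and the polynomial changes sign), so $\tilde Z_0$ is an irreducible real algebraic curve, and every Zariski open subset of it remains irreducible. There is therefore no Zariski open $Z_0'\subseteq\R$ --- no matter how small $C$ is, and no matter how $\SC$ is refined --- over which the two Euclidean sheets become Zariski open-and-closed, and the continuous lift $g(x)=(1,x)/\sqrt{1+x^2}$ fails to be a regular map on any interval, since $\sqrt{1+x^2}$ is not rational. The root of the problem is that inverting $\pi$ amounts to extracting a square root, an operation that does not preserve (piecewise-)regularity; your use of Theorem~\ref{th-1-3} as a black box gives no control over this lifting obstruction.

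The paper avoids $\PB^n$ entirely. It triangulates a neighbourhood of $X$ by a finite simplicial complex $K$ and barycentrically subdivides until a continuous extension $\tilde f$ of $f$ satisfies $\tilde f(\Delta)\neq\SB^n$ for every simplex $\Delta\in K$; then $\restr{\tilde f}{\Delta}$ takes values in an affine chart $\SB^n\setminus\{\mathrm{pt}\}\cong\R^n$, and Lemma~\ref{lem-3-7} produces a regular approximation on $\Delta$ agreeing with the already constructed map on $|\dot{\Delta}|$ (Proposition~\ref{prop-6-1}). The induced stratification $\SC_K(V)$ together with Lemma~\ref{lem-4-4} then makes the glued map piecewise-regular. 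Working directly in an affine chart of $\SB^n$ sidesteps the square-root obstruction that sinks the $\PB^n$ route.
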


Theorem~\ref{th-1-5}, which is proved in Section~\ref{sec-6},
implies that each continuous map from~$X$ into $\SB^n$ is
homotopic to a piecewise-regular map. However, the following
homotopy result requires a different proof.

\begin{theorem}\label{th-1-6}
Let $V$ be a compact nonsingular real algebraic variety, $n$ a
positive integer, and ${f \colon V \to \SB^n}$ a continuous map.
Then there exists a stratification $\SC$ of~$V$ such that, for each nonnegative integer $k$, the map
$f$ is homotopic to a piecewise $\SC$-regular map ${g \colon V \to
\SB^n}$ of class~$\C^k$.
\end{theorem}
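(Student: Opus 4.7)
The plan is to combine Theorem~\ref{th-1-5} with a $\C^k$-smoothing procedure that preserves piecewise regularity on a fixed stratification, carried out through stereographic charts so that values stay on $\SB^n$.

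By Theorem~\ref{th-1-5} there exist a stratification $\SC$ of $V$ (which I may refine so that the union of strata of codimension $\geq c$ is Zariski closed for each $c$) and a continuous piecewise $\SC$-regular map $h \colon V \to \SB^n$ with $\sup_{x \in V}\|h(x) - f(x)\| < 1$. The straight-line homotopy $(1-t)f + th$ in $\R^{n+1}$ avoids the origin, and its radial projection $v \mapsto v/\|v\|$ realizes a homotopy $f \simeq h$ in $\SB^n$. This $\SC$ is the stratification claimed in the theorem, and it depends only on $V$, $n$, and $f$, not on $k$. It suffices now, for a given $k \geq 0$, to produce a $\C^k$ piecewise $\SC$-regular map $g \colon V \to \SB^n$ close enough to $h$ in $C^0$ that $g \simeq h$ by the same radial-projection argument.

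To construct $g$ I would work locally on the target via stereographic projection. For any $p \in \SB^n$, stereographic projection $\sigma_p \colon \SB^n \setminus \{p\} \to \R^n$ and its inverse are biregular isomorphisms of real algebraic varieties. Since $h^{-1}(p) \cap h^{-1}(-p) = \emptyset$, the open sets $V_\pm := V \setminus h^{-1}(\mp p)$ cover $V$, and on each $V_\pm$ the composition $u_\pm := \sigma_{\mp p} \circ h$ is a continuous piecewise $\SC$-regular map with values in $\R^n$. The problem reduces to approximating $u_\pm$ componentwise in $C^0$ by $\C^k$ piecewise $\SC$-regular real-valued functions on $V_\pm$, then transporting back via $\sigma_{\mp p}^{-1}$ and gluing on $V_+ \cap V_-$ by a $\C^k$ piecewise $\SC$-regular cutoff. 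The scalar approximation is proved by induction on the codimension of strata. If $Z \subseteq V$ is the Zariski closed union of all strata of codimension $\geq c$, then because $V$ is compact and nonsingular the ideal of $Z$ in $V$ is generated by finitely many regular functions $p_1, \ldots, p_m$, so $\phi := p_1^2 + \cdots + p_m^2$ is a nonnegative regular function on $V$ with $\phi^{-1}(0) = Z$. The inductive hypothesis supplies a $\C^k$ piecewise $\SC$-regular approximation $\tilde u$ of $u$ on $V \setminus Z$; one blends $\tilde u$ with a $\C^k$ piecewise $\SC$-regular extension of $u|_Z$ (obtained from the outer induction layer) using a regular multiplier of the form $\phi^{k+1}/(\phi^{k+1}+\varepsilon)$ for small $\varepsilon > 0$. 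This multiplier is regular on all of $V$ and its derivatives of order $\leq k$ vanish on $Z$, forcing the blended function to be $\C^k$ across $Z$ while preserving piecewise $\SC$-regularity.

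The main obstacle is the joint requirement that $g$ take values exactly in $\SB^n$ and remain piecewise regular for the same $\SC$: a naive normalization $v \mapsto v/\|v\|$ introduces a square root and destroys regularity, so one cannot simply smooth $h$ as an $\R^{n+1}$-valued map and project. The stereographic detour circumvents this, because the transition maps between charts are regular; combined with the inductive smoothing via powers of $\phi$, it yields both the $\C^k$ regularity and the piecewise $\SC$-regular structure required. The delicate point in the scalar induction is the compatibility of the blended extension with the piecewise $\SC$-regular structure on strata contained in $Z$, which is where the assumption that $V$ is compact and nonsingular is used in full strength.
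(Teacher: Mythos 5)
Your approach is genuinely different from the paper's, but it contains a serious gap in the smoothing step that I do not see how to close.

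The paper does not start from Theorem~\ref{th-1-5}. Instead it uses the Pontryagin--Thom construction: it represents $\PT(f)$ by a framed submanifold, realizes a nearby representative as a union $N$ of components of the zero set $Z = \psi^{-1}(0)$ of a regular map $\psi \colon V \to \R^n$ transverse to $0$ (via the Weierstrass approximation theorem), and then \emph{explicitly} builds $g$ by sending $x \mapsto \rho^{-1}\bigl(\psi(x)/\alpha(x)^{2(q+r)}\bigr)$ on $V \setminus K$ and $x \mapsto a$ on $K = Z \setminus N$. Here $\alpha$ is a $\C^l$ function vanishing exactly on $K$ whose restrictions to a tubular neighborhood $T$ of $K$ and to $V \setminus \Int T$ are regular (Lemma~\ref{lem-7-1}); the resulting stratification is $\SC = \{Z, \partial T, V \setminus (Z \cup \partial T)\}$, with at most three strata, and the homotopy $g \simeq f$ is read off from the Pontryagin--Thom correspondence. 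Crucially, the exponents $q$ and $r$ are chosen using the \L ojasiewicz inequality $\|\psi(x)\| \geq c\,\alpha(x)^{2q}$ near $K$, which is exactly what controls the rate at which the ratio $\psi/\alpha^{2(q+r)}$ and its derivatives behave and makes $g$ of class $\C^k$.

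Your proposal instead tries to approximate the piecewise $\SC$-regular map $h$ from Theorem~\ref{th-1-5} by a $\C^k$ piecewise $\SC$-regular map $g$, via an induction on codimension of strata and a blending multiplier $m = \phi^{k+1}/(\phi^{k+1}+\varepsilon)$. This does not work as stated. The multiplier $m$ is flat to order $k$ on $Z$ in a qualitative sense, but the inductively constructed approximation $\tilde u$ on $V \setminus Z$ can have derivatives that blow up near $Z$ at an \emph{arbitrary} rate (this rate depends on $\tilde u$ and on the geometry, not just on $k$), so the product $m\tilde u$ need not extend $\C^k$-smoothly across $Z$. Compensating such blowup is precisely where the paper invokes a \L ojasiewicz estimate and then chooses the exponent large enough; a fixed power $k+1$ is in general insufficient. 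The step ``a $\C^k$ piecewise $\SC$-regular extension of $u|_Z$ obtained from the outer induction layer'' is also unjustified --- this is essentially the statement being proved, applied on a subvariety, and the induction is not set up cleanly enough to deliver it. Finally, note that if your smoothing step were correct, combining it with Theorem~\ref{th-1-5} would show that every continuous map $V \to \SB^n$ can be \emph{approximated} by $\C^k$ piecewise-regular maps, which the paper explicitly identifies as an open question in the paragraph following Example~\ref{ex-1-7}; this is a strong indication that the smoothing claim requires an argument that is not present here.
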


We prove Theorem~\ref{th-1-6} in Section~\ref{sec-7}. It turns
out that a suitable stratification $\SC$ of~$V$ is quite simple
and consists of at most 3~strata.

Theorems~\ref{th-1-3}, \ref{th-1-5} and~\ref{th-1-6} are optimal
in the sense explained in the following example.

\begin{example}\label{ex-1-7}
Let $W$ be a compact nonsingular real algebraic variety.
A~cohomology class in $H^q(W; \Z/2)$ is said to be
\emph{algebraic} if the homology class Poincar\'e dual to it can
be represented by a Zariski closed subvariety of $W$ of
codimension $q$, cf. \cite{bib3, bib6}. The set $\Halg^q(W;
\Z/2)$ of all algebraic cohomology classes in $H^q(W; \Z/2)$
forms a subgroup. Obviously, the unique generator~$s_q$ of
$H^q(\SB^q; \Z/2) \cong \Z/2$ is an algebraic cohomology class.

The real algebraic varieties $\G_1(\F^2)$ and $\SB^{d(\F)}$ are
canonically biregularly isomorphic and will be identified. For any
positive integer $m$, let $\T^m = \SB^1 \times \cdots \times
\SB^1$ be the $m$-fold product of $\SB^1$. Clearly,
\begin{equation*}
\T^m \subseteq \R^{2m} = \R^{2m} \times \{0\} \subseteq
\R^{2m+1}.
\end{equation*}
We will regard $\T^m$ as a subset of $\R^{2m+1}$.

Fix an integer $m \geq d(\F)+1$, and let $y_0$ be a point in
$\T^{m-d(\F)}$. Let $\alpha$ be the homology class in
$H_{d(\F)}(\T^m; \Z/2)$ represented by the $\C^{\infty}$
submanifold $N \coloneqq \T^{d(\F)} \times \{y_0\} \subseteq \T^m$. Set
\begin{equation*}
G \coloneqq \{u \in H^{d(\F)}(\T^m; \Z/2) \colon \langle u,
\alpha \rangle = 0 \},
\end{equation*}
where $\langle u, \alpha \rangle$ stands for the Kronecker
product. Let $\pi \colon \T^m = \T^{d(\F)} \times \T^{m-d(\F)} \to
\T^{d(\F)}$ be the canonical projection and let $\tau \colon \T^{d(\F)}
\to \SB^{d(\F)}$ be a $\C^{\infty}$ map of topological degree~$1$. For
the $\C^{\infty}$ map $g \coloneqq \tau \circ \pi \colon \T^m \to
\SB^{d(\F)}$, we have
\begin{equation*}
g^*(s_{d(\F)}) \notin G.
\end{equation*}
Since the normal bundle to $N$ in $\T^m$ is trivial and $N$ is the
boundary of a compact $\C^{\infty}$ manifold with boundary, it follows
from \cite[Proposition~2.5, Theorem~2.6]{bib23} that there exist a
nonsingular real algebraic variety $V$ and a $\C^{\infty}$ diffeomorphism
$\sigma \colon V \to \T^m$ with
\begin{equation*}
\Halg^{d(\F)} (V; \Z/2) \subseteq \sigma^*(G).
\end{equation*}
The map
\begin{equation*}
h \coloneqq g \circ \sigma \colon V \to \G_1(\F^2) = \SB^{d(\F)}
\end{equation*}
is of class $\C^{\infty}$, and
\begin{equation*}
h^*(s_{d(\F)}) \notin \Halg^{d(\F)}(V; \Z/2).
\end{equation*}
By \cite[Propositions~7.2 and~7.7]{bib58}, $h$ is not homotopic to any
stratified-regular map. In particular, $h$ cannot be approximated by
stratified-regular maps, which is of interest in view of
Theorems~\ref{th-1-3} and~\ref{th-1-5}. Also, by
Proposition~\ref{prop-2-13}, $h$ is not homotopic to any
piecewise-regular map of class $\C^{\infty}$; thus the case $k = \infty$
cannot be included in Theorem~\ref{th-1-6}.

Furthermore, according to \cite[Proposition~1.2]{bib15}, $V$ as above
can be assumed to be a Zariski closed subvariety of $\R^{2m+1}$ that is
obtained from $\T^m$ via an arbitrarily small $\C^{\infty}$ isotopy.
\end{example}

From a different viewpoint, strengthening of Theorems~\ref{th-1-3},
\ref{th-1-5} and~\ref{th-1-6} might be possible. Given a compact
nonsingular real algebraic variety $V$ and two positive integers $n$ and
$k$, it remains an open question whether every continuous map from $V$
into $\G(\F^n)$ or $\SB^n$ can be approximated by piecewise-regular maps
of class $\C^k$.

\begin{example}\label{ex-1-8}
Stratified-regular maps often have better approximation and homotopy
properties than regular ones. For instance, if $V$ is a compact
nonsingular real algebraic variety of dimension $n \geq 1$, then every
continuous map from $V$ into $\SB^n$ can be approximated by
stratified-regular maps, cf. \cite[Corollary~1.3]{bib52}. On the other
hand, if $n$ is even, then each regular map from $\T^n$ into $\SB^n$ is
null homotopic, cf. \cite{bib8} or \cite[Theorem~13.5.1]{bib6}.
\end{example}

Piecewise-regular maps are not always more flexible than regular maps.

\begin{example}\label{ex-1-9}
Let $F_r$ be the Fermat curve of degree $r$ in the real projective
plane~$\PB^2(\R)$,
\begin{equation*}
F_r \coloneqq \{(x,y,z) \in \PB^2(\R) \colon x^r + y^r = z^r\}.
\end{equation*}
Clearly, $F_2$ can be identified with $\SB^1$. If $s > r \geq 2$, then
every piecewise-regular map from $F_r$ into~$F_s$ is constant. This
claim holds since, by virtue of the Hurwitz--Riemann theorem
\cite[p.~140]{bib30}, every rational map from $F_r$ into $F_s$ is
constant.
\end{example}

It would be of interest to decide whether or not counterparts of
Theorems~\ref{th-1-3} and~\ref{th-1-5} hold for maps with values in an
arbitrary rational nonsingular real algebraic variety.

We have already indicated how the present paper is organized. It should
be added that Section~\ref{sec-3} contains some preliminary technical
results.

Henceforth, the following notation will be frequently used.

\begin{notation}\label{not-1-10}
For any function $f \colon \Omega \to \R$ defined on some set $\Omega$,
we put
\begin{equation*}
Z(f) \coloneqq \{x \in \Omega \colon f(x)=0\}.
\end{equation*}
\end{notation}

\section{General properties of piecewise-regular maps}\label{sec-2}

We first deal with regular maps in the sense of
Definition~\ref{def-1-1}.

\begin{lemma}\label{lem-2-1}
Let $V$ be a real algebraic variety, $W \subseteq \R^p$ a Zariski closed
subvariety, and
\begin{equation*}
f = (f_1, \ldots, f_p) \colon X \to W \subseteq \R^p
\end{equation*}
a map defined on some subset $X \subseteq V$. Then the following
conditions are equivalent:
\begin{conditions}
\item\label{lem-2-1-a} The map $f$ is regular.
\item\label{lem-2-1-b} Each component function $f_j \colon X \to \R$ is
regular, $j=1,\ldots,p$.
\end{conditions}
\end{lemma}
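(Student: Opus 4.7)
The plan is to unpack Definition~\ref{def-1-1} componentwise and reassemble it using Zariski density.

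For the implication \ref{lem-2-1-a}$\Rightarrow$\ref{lem-2-1-b}, I would simply take any regular extension $\tilde f \colon Z_0 \to W$ of $f$, where $Z_0$ is a Zariski open neighborhood of $X$ in the Zariski closure $Z$ of $X$ in $V$. Composing with the restriction to $W$ of the $j$-th coordinate projection $\R^p \to \R$ (which is a regular map) gives a regular function $\tilde f_j \colon Z_0 \to \R$ that extends $f_j$. So each $f_j$ is regular. This direction is essentially a tautology.

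For \ref{lem-2-1-b}$\Rightarrow$\ref{lem-2-1-a}, the idea is to intersect the domains of component extensions and then verify that the assembled map actually lands in $W$. For each $j$, pick a Zariski open neighborhood $Z_j \subseteq Z$ of $X$ together with a regular function $\tilde f_j \colon Z_j \to \R$ extending $f_j$; the set $Z_0 \coloneqq Z_1 \cap \cdots \cap Z_p$ is still a Zariski open neighborhood of $X$ in $Z$. Define the regular map $\tilde f \coloneqq (\restr{\tilde f_1}{Z_0}, \ldots, \restr{\tilde f_p}{Z_0}) \colon Z_0 \to \R^p$, so that $\restr{\tilde f}{X} = f$.

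The only real content is checking $\tilde f(Z_0) \subseteq W$. Since $W$ is Zariski closed in $\R^p$, write $W = Z(g_1) \cap \cdots \cap Z(g_s)$ for some polynomials $g_1, \ldots, g_s \in \R[y_1, \ldots, y_p]$. Each composition $h_k \coloneqq g_k \circ \tilde f \colon Z_0 \to \R$ is a regular function vanishing on $X$ (because $f(X) \subseteq W$). Now $X$ is Zariski dense in $Z_0$: indeed, the Zariski closure of $X$ in $V$ is $Z$, and $Z_0 \subseteq Z$ is Zariski open, so the Zariski closure of $X$ in $Z_0$ equals $Z_0 \cap Z = Z_0$. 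Therefore $Z(h_k) \supseteq X$ implies $Z(h_k) = Z_0$, i.e.\ $h_k \equiv 0$ on $Z_0$, giving $\tilde f(Z_0) \subseteq W$, as required.

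The main (mild) obstacle is exactly this last point: the assembled map a priori only takes values in $\R^p$, and one must leverage the Zariski density of $X$ in $Z_0$ together with the fact that $W$ is cut out by polynomial equations in order to confine the image to $W$ without shrinking $Z_0$ to a merely locally closed subset.
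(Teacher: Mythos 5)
Your proof is correct and takes essentially the same approach as the paper: intersect the component domains, assemble $\tilde f$, and use Zariski density of $X$ in $Z_0$ to confine the image to $W$. The only cosmetic difference is that the paper phrases the last step by saying $\tilde f$ is continuous in the Zariski topology (so $\tilde f^{-1}(W)$ is Zariski closed and contains the dense set $X$), while you unpack this by pulling back the defining polynomial equations of $W$.
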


\begin{proof}
It is clear that (\ref{lem-2-1-a}) implies (\ref{lem-2-1-b}).

Suppose
that (\ref{lem-2-1-b}) holds, and let $Z$ be the Zariski closure of~$X$
in $V$. We can find a Zariski open neighborhood $Z_0 \subseteq Z$ of~$X$
and regular functions $\tilde f_j \colon Z_0 \to \R$ such that
$\restr{\tilde f_j}{X} = f_j$ for $j=1,\ldots,p$. The regular map
$\tilde f =(\tilde f_1, \ldots, \tilde f_p) \colon Z_0 \to \R^p$ is
continuous in the Zariski topology and $\tilde f (X) \subseteq W$. Hence
$\tilde f(Z_0) \subseteq W$, which implies (\ref{lem-2-1-a}).
\end{proof}

Regular functions can be characterized as follows.

\begin{samepage}
\begin{lemma}\label{lem-2-2}
Let $V \subseteq \R^n$ be a Zariski closed subvariety, and $f \colon X
\to \R$ a function defined on some subset $X \subseteq V$. Then the
following conditions are equivalent:
\begin{conditions}
\item\label{lem-2-2-a} The function $f$ is regular.

\item\label{lem-2-2-b} For each point $x \in X$ there exist a Zariski
open neighborhood $V_x \subseteq V$ of $x$ and a regular function $F_x
\colon V_x \to \R$ such that $F_x = f$ on $X \cap V_x$.

\item\label{lem-2-2-c} There exist two regular functions $\varphi, \psi
\colon V \to \R$ such that $X \subseteq V \setminus Z(\psi)$ and $f =
\varphi / \psi$ on $X$.

\item\label{lem-2-2-d} For each point $x \in X$ there exist a Zariski
open neighborhood $U_x \subseteq \R^n$ of $x$ and two polynomial
functions $P_x, Q_x \colon \R^n \to \R$ such that $U_x \subseteq \R^n
\setminus Z(Q_x)$ and $f = P_x / Q_x$ on $X \cap U_x$.\nopagebreak

\item\label{lem-2-2-e} There exist two polynomial functions $P, Q \colon
\R^n \to \R$ such that $X \subseteq \R^n \setminus Z(Q)$ and $f = P/Q$
on $X$.
\end{conditions}
\end{lemma}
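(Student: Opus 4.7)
The plan is to establish equivalence via the cycle $(a) \Rightarrow (e) \Rightarrow (d) \Rightarrow (b) \Rightarrow (a)$ together with $(e) \Rightarrow (c) \Rightarrow (a)$. Conceptually, (a), (c), (e) give increasingly explicit \emph{global} representations of the regular function (an extension on a Zariski open of the Zariski closure $Z$ of $X$ in $V$, a global quotient on $V$, a global quotient on $\R^n$), whereas (b) and (d) are the corresponding \emph{local} reformulations.

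I would first handle the easy implications. For $(e) \Rightarrow (d)$, take $U_x := \R^n \setminus Z(Q)$, $P_x := P$, $Q_x := Q$ for every $x \in X$. For $(d) \Rightarrow (b)$, set $V_x := U_x \cap V$ and $F_x := (P_x/Q_x)|_{V_x}$. For $(e) \Rightarrow (c)$, simply restrict $P$ and $Q$ to $V$. For $(c) \Rightarrow (a)$, the set $Z_0 := Z \setminus Z(\psi|_Z)$ is a Zariski open neighborhood of $X$ in $Z$ (as $\psi$ does not vanish on $X$), and $\tilde f := (\varphi/\psi)|_{Z_0}$ is a regular extension of $f$.

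For $(a) \Rightarrow (e)$, I would invoke the standard fact that on a Zariski open subset of an affine real algebraic variety, every regular function admits a global representation as a quotient of regular functions on the ambient variety with non-vanishing denominator, cf.\ \cite[Proposition~3.2.3]{bib6}. Applied to the extension $\tilde f \colon Z_0 \to \R$, this yields $\tilde f = \tilde\varphi/\tilde\psi$ with $\tilde\varphi, \tilde\psi$ regular on $Z$ and $Z(\tilde\psi) \cap Z_0 = \emptyset$. Since $Z$ is Zariski closed in $\R^n$, its regular functions are restrictions of polynomial functions on $\R^n$; lifting $\tilde\varphi$ and $\tilde\psi$ to such polynomials $P$ and $Q$ then delivers (e), because $X \subseteq Z_0$ forces $Q$ to be non-vanishing on $X$.

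The main obstacle is $(b) \Rightarrow (a)$, the gluing step. Setting $W_x := V_x \cap Z$ and $G_x := F_x|_{W_x}$, I obtain a family of regular functions on Zariski open subsets of $Z$, each agreeing with $f$ on $X \cap W_x$. The crucial claim is that $G_x = G_{x'}$ on every overlap $W_x \cap W_{x'}$. This rests on two observations: $X$ is Zariski dense in $Z$ by definition of the Zariski closure, so $X \cap W_x \cap W_{x'}$ is Zariski dense in the Zariski open set $W_x \cap W_{x'}$; and regular functions are Zariski continuous, so the regular function $G_x - G_{x'}$, vanishing on a Zariski dense subset, must vanish identically on $W_x \cap W_{x'}$. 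The family then glues to a single regular function $\tilde f$ on the Zariski open neighborhood $Z_0 := \bigcup_{x \in X} W_x$ of $X$ in $Z$, establishing (a).
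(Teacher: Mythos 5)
Your proof is correct but takes a genuinely different route from the paper's. The paper establishes the cycle $(a) \Rightarrow (b) \Rightarrow (d) \Rightarrow (e) \Rightarrow (c) \Rightarrow (a)$ (plus $(e) \Rightarrow (d)$), treats every arrow except $(d) \Rightarrow (e)$ as immediate, and proves $(d) \Rightarrow (e)$ directly by a sum-of-squares ``partition of unity'' trick: cover $X$ by finitely many $U_{x_i}$ (Noetherianity of the Zariski topology), choose polynomial $S_i$ with $Z(S_i) = \R^n \setminus U_{x_i}$, and set $Q = \sum S_i^2 Q_i^2$, $P = \sum S_i^2 Q_i P_i$. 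You instead route through $(b) \Rightarrow (a)$ (gluing local extensions on overlaps, using that $X$ is Zariski dense in $Z$ and that the zero set of the regular function $G_x - G_{x'}$ is Zariski closed) and $(a) \Rightarrow (e)$ (delegated to the global-quotient representation of regular functions on Zariski opens of affine varieties). The trade-off: the paper is self-contained, concentrating all the work in one explicit construction, whereas you modularize, outsourcing the sum-of-squares patching to a citation but then needing a separate gluing argument that the paper sidesteps entirely by never invoking $(b) \Rightarrow (a)$ directly. One minor slip to repair in $(a) \Rightarrow (e)$: you assert that ``regular functions on $Z$ are restrictions of polynomial functions on $\R^n$,'' which is false (e.g.\ $1/(1+x^2)$ on $Z = \R$). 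They are ratios of restricted polynomials with globally non-vanishing denominator; writing $\tilde\varphi = p_1/q_1$ and $\tilde\psi = p_2/q_2$ with $q_1, q_2$ non-vanishing on $Z$, take $P = p_1 q_2$ and $Q = q_1 p_2$, observing that $p_2$ cannot vanish on $Z_0 \supseteq X$ because $\tilde\psi$ does not.
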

\end{samepage}

\begin{proof}
It readily follows that
\begin{equation*}
\textrm{(\ref{lem-2-2-a}) $\Rightarrow$ (\ref{lem-2-2-b}) $\Rightarrow$
(\ref{lem-2-2-d}), (\ref{lem-2-2-e}) $\Rightarrow$ (\ref{lem-2-2-c})
$\Rightarrow$ (\ref{lem-2-2-a}), and (\ref{lem-2-2-e})
$\Rightarrow$ (\ref{lem-2-2-d}).}
\end{equation*}
Suppose that (\ref{lem-2-2-d}) holds. For each point $x \in X$, pick a
polynomial function ${S_x \colon \R^n \to \R}$ with $Z(S_x) = \R^n
\setminus U_x$. Since the Zariski topology on $\R^n$ is Noetherian, we
can find a finite subset $\{ x_1, \ldots, x_r \} \subseteq X$ such that
\begin{equation*}
X \subseteq U \coloneqq U_{x_1} \cup \ldots \cup U_{x_r}.
\end{equation*}
Set $P_i = P_{x_i}$, $Q_i = Q_{x_i}$, $S_i = S_{x_i}$, and
\begin{equation*}
P = \sum_{i=1}^r S_i^2Q_iP_i, \quad Q = \sum_{i=1}^r S_i^2Q_i^2.
\end{equation*}
Then $U = \R^n \setminus Z(Q)$ and $S_i^2Q_if = S_i^2P_i$ on $X$.
Consequently, $f=P/Q$ on $X$, hence (\ref{lem-2-2-e}) holds. The proof
is complete.
\end{proof}

A~\emph{filtration} of a real algebraic variety $V$ is a finite sequence
$\FC = (V_0, V_1, \ldots, V_{m+1})$ of Zariski closed subvarieties of
$V$ satisfying
\begin{equation*}
V = V_0 \supseteq V_1 \supseteq \cdots \supseteq V_{m+1} = \varnothing.
\end{equation*}
We allow $V_i = V_{i+1}$ for some $i$. Note that $\overline{\FC}
\coloneqq \{ V_i \setminus V_{i+1} \colon 0 \leq i \leq m \}$ is a
stratification of $V$.

The following is a generalization of \cite[Proposition~2.2]{bib58}.

\begin{proposition}\label{prop-2-3}
Let $V$, $W$ be real algebraic varieties, and $f \colon X \to W$ a map
defined on some subset $X \subseteq V$. Then the following conditions
are equivalent:
\begin{conditions}
\item\label{prop-2-3-a} There exists a stratification $\SC$ of $V$ such
that for every stratum $S \in \SC$ the restriction of $f$ to $X \cap S$
is a regular map.

\item\label{prop-2-3-b} There exists a filtration $\FC$ of $V$ such that
for every stratum $T \in \overline{\FC}$ the restriction of $f$ to $X
\cap T$ is a regular map.

\item\label{prop-2-3-c}
For every Zariski closed subvariety $Z \subseteq V$, there exists a
Zariski open dense subset $Z^0 \subseteq Z$ such that the restriction of
$f$ to $X \cap Z^0$ is a regular map.
\end{conditions}
In particular, the map $f$ is stratified-regular if and only if it is
continuous and satisfies the equivalent conditions
\normalfont{(\ref{prop-2-3-a})}, \normalfont{(\ref{prop-2-3-b})},
\normalfont{(\ref{prop-2-3-c})}.
\end{proposition}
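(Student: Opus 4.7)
The plan is to establish the cycle $\text{(b)} \Rightarrow \text{(a)} \Rightarrow \text{(c)} \Rightarrow \text{(b)}$; the ``in particular'' assertion then follows at once from Definition~\ref{def-1-2}. The implication $\text{(b)} \Rightarrow \text{(a)}$ is immediate, since $\overline{\FC}$ is by construction a stratification of $V$ whose strata are the successive differences $V_i \setminus V_{i+1}$.

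For $\text{(a)} \Rightarrow \text{(c)}$, fix a Zariski closed subvariety $Z \subseteq V$ and decompose it into irreducible components $Z = Z_1 \cup \cdots \cup Z_r$. Since each irreducible $Z_i$ is covered by the finite family of Zariski locally closed subsets $\{Z_i \cap S : S \in \SC\}$, there is a unique stratum $S_{j_i} \in \SC$ and a Zariski open dense subset $U_i \subseteq Z_i$ with $U_i \subseteq S_{j_i}$. Setting $U_i' \coloneqq U_i \setminus \bigcup_{k \neq i} Z_k$ gives a Zariski open subset of $Z$ that is dense in $Z_i$ and meets no other component, and $Z^0 \coloneqq U_1' \cup \cdots \cup U_r'$ is a Zariski open dense subset of $Z$ on which each piece $f|_{X \cap U_i'}$ is regular as a restriction of the regular map $f|_{X \cap S_{j_i}}$. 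To glue these into a single regular extension in the sense of Definition~\ref{def-1-1}, write $Y \coloneqq \overline{X \cap Z^0}$ and $Y_i \coloneqq \overline{X \cap U_i'} \subseteq Z_i$; then $Y = \bigcup_i Y_i$, distinct $Y_i$ and $Y_k$ meet only inside the lower-dimensional $Z_i \cap Z_k$, and so $Y_i' \coloneqq Y \setminus \bigcup_{k \neq i} Y_k$ is Zariski open in $Y$, the sets $Y_i'$ are pairwise disjoint, and each contains $X \cap U_i'$. Intersecting $Y_i'$ with the Zariski open subset of $\overline{X \cap S_{j_i}}$ carrying the regular extension of $f|_{X \cap S_{j_i}}$ produces a Zariski open neighborhood of $X \cap U_i'$ in $Y$, and the resulting maps assemble on the disjoint union into a single regular map on a Zariski open neighborhood of $X \cap Z^0$ in $Y$.

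The implication $\text{(c)} \Rightarrow \text{(b)}$ is a straightforward Noetherian induction: apply (c) with $Z = V$ to obtain a Zariski open dense $V^0 \subseteq V$ with $f|_{X \cap V^0}$ regular, set $V_1 \coloneqq V \setminus V^0$, and iterate inside $V_1$. Since Zariski closed subvarieties of $V_1$ are Zariski closed in $V$, and Definition~\ref{def-1-1} depends only on the subset to which $f$ is restricted, condition (c) is inherited by the restriction, and the Noetherian descending chain condition on closed subvarieties forces termination in finitely many steps, producing the required filtration. The main obstacle is the gluing in $\text{(a)} \Rightarrow \text{(c)}$: the regular extensions supplied by (a) a priori live in different ambient Zariski closures $\overline{X \cap S_{j_i}}$, and decoupling them via the passage $U_i \rightsquigarrow U_i'$ followed by the excision $Y_i \rightsquigarrow Y_i'$ inside $Y$ is exactly what makes assembly into a single regular map on a Zariski open neighborhood of $X \cap Z^0$ in $Y$ possible.
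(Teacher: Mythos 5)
Your proof is correct and follows the same cycle $(\mathrm{b})\Rightarrow(\mathrm{a})\Rightarrow(\mathrm{c})\Rightarrow(\mathrm{b})$ as the paper, with the same key observations: $\overline{\FC}$ is a stratification, the irreducible case of (c) comes from picking the unique stratum dense in an irreducible $Z$, and (c)$\Rightarrow$(b) is Noetherian descent. The one place you go beyond the paper is the passage from irreducible $Z$ to general $Z$ in (a)$\Rightarrow$(c), which the paper dismisses with ``it immediately follows'': your excision $U_i \rightsquigarrow U_i'$ followed by $Y_i \rightsquigarrow Y_i'$ inside $Y = \overline{X\cap Z^0}$ is exactly the gluing needed to assemble a single regular extension on a Zariski open neighborhood of $X\cap Z^0$ in $Y$, and it is a worthwhile expansion of a step the paper leaves implicit.
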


\begin{proof}
It is clear that (\ref{prop-2-3-b}) implies (\ref{prop-2-3-a}).

Suppose that (\ref{prop-2-3-a}) holds, and let $Z \subseteq V$ be an
irreducible Zariski closed subvariety. We can find a stratum $S \in \SC$
such that the intersection $S \cap Z$ is nonempty and Zariski open
(hence Zariski dense) subset of $Z$. Thus (\ref{prop-2-3-c}) holds for
each irreducible~$Z$. It immediately follows that (\ref{prop-2-3-c})
holds in the general case.

Now suppose that (\ref{prop-2-3-c}) is satisfied. Set $V_0 =V$. Making
use of (\ref{prop-2-3-c}) with $Z=V_0$, we find a Zariski closed nowhere
dense subvariety $V_1 \subseteq V_0$ such that the restriction of $f$ to
$X \cap (V_0 \setminus V_1)$ is a regular map. Note that $\dim V_1 <
\dim V_0$. We repeat this construction with $Z = V_1$ to get $V_2
\subseteq V_1$, and so on. This process terminates after finitely many
steps with $V_{m+1} = \varnothing$, which proves (\ref{prop-2-3-b}).
\end{proof}

Let $V$, $W$ be real algebraic varieties, $X \subseteq V$ some subset, and
$Z$ the Zariski closure of $X$ in~$V$. We say that a map $f \colon X \to
W$ is \emph{rational} if there exists a Zariski open dense subset $Z^0
\subseteq Z$ such that the restriction of $f$ to $X \cap Z^0$ is a
regular map (no condition on the restriction of $f$ to $X \setminus (X
\cap Z^0)$ is imposed).

In view of Proposition~\ref{prop-2-3}, each stratified-regular map is
continuous rational. On the other hand, if the set $\Sing(V)$ of
singular points of $V$ is nonempty, then it can happen that a function
from $V$ into $\R$ is continuous rational but it is not
stratified-regular, cf. \cite[Example~2]{bib42}. However, the following
holds.

\begin{samepage}
\begin{proposition}\label{prop-2-4}
Let $V$, $W$ be real algebraic varieties, and $f \colon U \to W$ a map
defined on an open subset $U \subseteq V \setminus \Sing(V)$. Then the
following conditions are equivalent:\nopagebreak
\begin{conditions}
\item\label{prop-2-4-a} The map $f$ is stratified-regular.

\item\label{prop-2-4-b} The map $f$ is continuous and rational.
\end{conditions}
\end{proposition}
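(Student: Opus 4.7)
The implication (\ref{prop-2-4-a})~$\Rightarrow$~(\ref{prop-2-4-b}) is immediate from Proposition~\ref{prop-2-3}: a stratified-regular map is continuous by definition, and condition~(\ref{prop-2-3-c}) applied with $Z$ equal to the Zariski closure of $U$ in $V$ yields a Zariski open dense subset of that closure on which $f$ is regular, giving rationality.

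For the converse, my plan is to verify condition~(\ref{prop-2-3-c}) of Proposition~\ref{prop-2-3}. Embedding $W$ as a Zariski closed subvariety of some $\R^p$ and applying Lemma~\ref{lem-2-1} to the component functions reduces matters to the scalar case $f\colon U\to\R$. Fix a Zariski closed subvariety $Z\subseteq V$, decompose it into irreducible components, and treat each one separately; the only nontrivial case is that of irreducible $Z$ with $Z\cap U\neq\varnothing$. Pick $x\in Z\cap U$: by hypothesis $V$ is nonsingular at $x$, so the local ring $\mathcal{O}_{V,Z}$ of $V$ at the generic point of $Z$ is a regular local ring, in particular a unique factorization domain. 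By the rationality of $f$ together with Lemma~\ref{lem-2-2}, write $f=P/Q$ on a Zariski dense open subset of $U$ inside a Zariski open affine neighborhood of $x$, and pass to a coprime representation $f=P'/Q'$ in the fraction field of $\mathcal{O}_{V,Z}$. The task then reduces to proving that $Q'$ is a unit in $\mathcal{O}_{V,Z}$, for then $Z^{0}\coloneqq Z\setminus Z(Q')$ is a Zariski open dense subset of $Z$ on which $f|_{U\cap Z^{0}}$ coincides with the regular function $P'/Q'$, as required.

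The crux of the proof is this unit claim, and it is exactly where I expect the main obstacle to lie. Suppose for contradiction that $Q'$ is a non-unit in $\mathcal{O}_{V,Z}$: then some prime factor $q$ of $Q'$ would lie in the maximal ideal, corresponding to an irreducible codimension-one subvariety $Y\supseteq Z$ of $V$; coprimality of $P'$ and $Q'$ would force the $Y$-valuation of $P'$ to be zero while that of $Q'$ is at least one, so $f=P'/Q'$ would have a genuine pole of positive order along $Y$. Because $V$ is nonsingular at every point of $Z\cap U\subseteq Y\cap U$, one can run a smooth real-analytic curve in $V$ through such a point transverse to $Y$, along which $|f|$ must blow up — contradicting the continuity of $f$ on $U$. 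This transversality step is precisely what the hypothesis $U\subseteq V\setminus\Sing(V)$ enables; without it the conclusion genuinely fails, as the example in \cite[Example~2]{bib42} cited just before the proposition shows.
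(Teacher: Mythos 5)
Your reduction to the scalar case $W=\R$ via Lemma~\ref{lem-2-1} agrees with the paper's first step, and the implication (\ref{prop-2-4-a})~$\Rightarrow$~(\ref{prop-2-4-b}) is handled correctly. But at that point the paper simply invokes \cite[Proposition~4.2]{bib41} (a variant of \cite[Proposition~8]{bib42}), whereas you attempt to reprove that result from scratch, and the proof you sketch has a real gap at exactly the step you flag as the crux.

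The claim that the coprime denominator $Q'$ must be a unit in $\mathcal{O}_{V,Z}$ is false, and your ``genuine pole'' argument does not rescue it. Consider $V=\R^2$, $U=\R^2$, and the continuous rational function $f=x^3/(x^2+y^2)$, which extends continuously by $f(0,0)=0$. Take $Z=\{(0,0)\}$. In the regular local ring $\mathcal{O}_{\R^2,(0,0)}$ the fraction $x^3/(x^2+y^2)$ is already in lowest terms (the irreducible element $x^2+y^2$ does not divide $x^3$), yet $Q'=x^2+y^2$ lies in the maximal ideal and is not a unit. Nevertheless the conclusion of condition~(\ref{prop-2-3-c}) still holds for this $Z$: one may take $Z^0=Z$ and note that $f|_Z\equiv 0$ is regular. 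So ``$Q'$ is a unit'' is strictly stronger than what must be proved, and in fact fails. The subsequent transversality argument fails for a related reason: the prime $q=x^2+y^2$ does define a codimension-one \emph{scheme}, but its real zero set is the single point $(0,0)$, so there is no real hypersurface $Y$ to run a transverse curve across, no blow-up of $\abs{f}$ (indeed $f\to 0$ along every curve through the origin), and hence no contradiction. This is precisely the phenomenon that makes the Koll\'ar--Nowak/Koll\'ar--Kucharz--Kurdyka result nontrivial: over $\R$ a nonzero denominator can vanish only on a real set of high codimension, so a scheme-theoretic pole may be invisible in the Euclidean topology. Any correct direct proof must deal with that, and the argument you outline does not.
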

\end{samepage}

\begin{proof}
We may assume that $W \subseteq \R^p$ is a Zariski closed subvariety.
Hence, by Lemma~\ref{lem-2-1}, the proof is reduced to the case $W =
\R$, which follows from \cite[Propostion~4.2]{bib41} (a variant of
\cite[Proposition~8]{bib42}).
\end{proof}

It is worthwhile to make a comment on the terminology used in different
papers.

\begin{remark}\label{rem-2-5}
In view of Proposition~\ref{prop-2-4}, reading papers \cite{bib47,
bib49, bib51, bib52, bib55, bib57} one can substitute everywhere
stratified-regular maps for continuous rational maps. It follows from
Proposition~\ref{prop-2-3} that stratified-regular functions coincide
with continuous \emph{hereditarily rational functions} studied in
\cite{bib41, bib42}. Furthermore, as explained in \cite{bib28, bib58,
bib63}, stratified-regular maps defined on a constructible subset of a
real algebraic variety are identical with \emph{reguluous maps}.
\end{remark}

We now present a counterpart of Proposition~\ref{prop-2-3} for
piecewise-regular maps.

\begin{proposition}\label{prop-2-6}
Let $V$, $W$ be real algebraic varieties, and $f \colon X \to W$ a map
defined on some subset $X \subseteq V$. Then the following conditions
are equivalent:
\begin{conditions}
\item\label{prop-2-6-a} There exists a stratification $\SC$ of $V$ such
that for every stratum $S \in \SC$ the restriction of $f$ to each
connected component of $X \cap S$ is a regular map.

\item\label{prop-2-6-b} There exists a filtration $\FC$ of $V$ such that
for every stratum $T \in \overline{\FC}$ the restriction of $f$ to each
connected component of $X \cap T$ is a regular map.

\item\label{prop-2-6-c} For every Zariski closed subvariety $Z \subseteq
V$ there exists a Zariski open dense subset $Z^0 \subseteq Z$ such that
the restriction of $f$ to each connected component of $X \cap Z^0$ is a
regular map.
\end{conditions}
In particular, the map $f$ is piecewise-regular if and only if it is
continuous and satisfies the equivalent conditions
\normalfont{(\ref{prop-2-6-a})}, \normalfont{(\ref{prop-2-6-b})},
\normalfont{(\ref{prop-2-6-c})}.
\end{proposition}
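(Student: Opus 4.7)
The plan is to follow the same three-step circle of implications as in Proposition~\ref{prop-2-3}, adapting each step to cope with the switch from ``$X\cap S$'' to ``each connected component of $X\cap S$''. The only new ingredient needed is the elementary observation that if $C\subseteq C'$ and $f|_{C'}$ is a regular map in the sense of Definition~\ref{def-1-1}, then $f|_{C}$ is also regular (any Zariski open neighborhood in the Zariski closure of $C'$ on which a regular extension exists restricts to such a neighborhood of the Zariski closure of $C$).

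The implication (\ref{prop-2-6-b})$\Rightarrow$(\ref{prop-2-6-a}) is immediate, since $\overline{\FC}$ is by construction a stratification of $V$. For (\ref{prop-2-6-a})$\Rightarrow$(\ref{prop-2-6-c}), I would first handle the case when $Z\subseteq V$ is irreducible: by (\ref{prop-2-6-a}) there is a stratum $S\in\SC$ with $S\cap Z$ nonempty and Zariski open (hence dense) in $Z$, so setting $Z^0=S\cap Z$, every connected component $C$ of $X\cap Z^0$ sits inside some connected component $C'$ of $X\cap S$, on which $f$ is regular; therefore $f|_C$ is regular. For reducible $Z=Z_1\cup\cdots\cup Z_k$ (irreducible components), I set $Z_i^{\circ}=Z_i\setminus\bigcup_{j\neq i}Z_j$, which is Zariski open in $Z$ and dense in $Z_i$, and choose $S_i\in\SC$ with $S_i\cap Z_i$ Zariski open dense in $Z_i$. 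Then $Z^0:=\bigcup_{i}(S_i\cap Z_i^{\circ})$ is Zariski open dense in $Z$; because the pieces $Z_i^{\circ}$ are pairwise disjoint, every connected component of $X\cap Z^0$ lies inside $X\cap S_i\cap Z_i^{\circ}$ for a single $i$, hence inside some connected component of $X\cap S_i$, and regularity transfers as above.

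Finally, (\ref{prop-2-6-c})$\Rightarrow$(\ref{prop-2-6-b}) is a Noetherian induction on dimension identical to the one in Proposition~\ref{prop-2-3}: set $V_0=V$, and apply (\ref{prop-2-6-c}) with $Z=V_0$ to produce a Zariski closed nowhere dense $V_1\subseteq V_0$ such that $f$ is regular on each connected component of $X\cap(V_0\setminus V_1)$; iterate with $Z=V_1$, $Z=V_2$, \ldots. Since $\dim V_i>\dim V_{i+1}$, the process terminates with $V_{m+1}=\varnothing$, yielding the required filtration~$\FC$.

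The last sentence of the proposition is then a formal consequence: (\ref{prop-2-6-a}) is exactly the definition of piecewise-regularity, and under the equivalence the continuity of $f$ is the only extra condition. No real obstacle is expected; the only point that requires a moment of care is the reducible case in (\ref{prop-2-6-a})$\Rightarrow$(\ref{prop-2-6-c}), where one must verify that the $S_i\cap Z_i^{\circ}$ are Zariski open in $Z$ and disjoint, so that connected components of $X\cap Z^0$ do not cross different irreducible components of $Z$.
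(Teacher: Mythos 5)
Your proof is correct and follows the same route the paper indicates (the paper simply says to repeat the proof of Proposition~\ref{prop-2-3} with minor modifications). You have spelled out precisely the modifications that are needed — the ``regularity restricts to subsets'' observation and the care with irreducible components so that connected components of $X\cap Z^0$ do not straddle different $Z_i$ — and both are sound.
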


\begin{proof}
One can repeat the proof of Proposition~\ref{prop-2-3} with only minor
modifications.
\end{proof}

We also have the following characterization of piecewise-regular maps.

\begin{proposition}\label{prop-2-7}
Let $V$, $W$ be real algebraic varieties, and $f \colon X \to W$ a
continuous map defined on some subset $X \subseteq V$. Then the
following conditions are equivalent:
\begin{conditions}
\item\label{prop-2-7-a} The map $f$ is piecewise-regular.

\item\label{prop-2-7-b} There exists a stratification $\SC$ of $V$ such
that for every stratum $S \in \SC$ the restriction $\restr{f}{X \cap S}
\colon X \cap S \to W$ is a piecewise-regular map.
\end{conditions}
\end{proposition}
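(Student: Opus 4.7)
The plan is to show that (a)$\Rightarrow$(b) is essentially trivial, and that (b)$\Rightarrow$(a) reduces to refining each stratum of $\SC$ by a stratification witnessing piecewise-regularity of the corresponding restriction, then gluing the refinements together into a single stratification of $V$.

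For (a)$\Rightarrow$(b), I would just take $\SC = \{V\}$, which is a valid stratification of $V$. Since $X\cap V = X$ and the restriction of $f$ to $X$ equals $f$, which is piecewise-regular by assumption, (b) holds at once.

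For the nontrivial direction (b)$\Rightarrow$(a), fix $\SC$ satisfying (b) and, for each stratum $S \in \SC$, regard $X \cap S$ as a subset of $S$. Since $S$ is Zariski locally closed in $V$, it is itself a real algebraic variety, and the remark following Definition~\ref{def-1-2} (independence of the ambient variety) tells me that $\restr{f}{X\cap S}$ being piecewise-regular yields a stratification $\TC_S$ of $S$ with respect to which this restriction is piecewise $\TC_S$-regular. I would then set
\begin{equation*}
\PC \coloneqq \bigcup_{S \in \SC} \TC_S,
\end{equation*}
viewing every $T \in \TC_S$ as a subset of $V$. The members of $\PC$ are pairwise disjoint: two strata inside a single $\TC_S$ are disjoint by definition, and strata coming from distinct $\TC_S$, $\TC_{S'}$ lie inside the disjoint sets $S$, $S'$. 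Their union is $\bigcup_S \bigcup \TC_S = \bigcup_S S = V$. The only point to check is that each $T \in \TC_S$ is Zariski locally closed in $V$; this follows from the transitivity of the property ``Zariski locally closed in''—writing $S = U_S \cap C_S$ with $U_S$ Zariski open and $C_S$ Zariski closed in $V$, and $T = U \cap C$ with $U$, $C$ Zariski open/closed in $S$, pull $U$ and $C$ back to Zariski open/closed subsets of $V$ and intersect. Hence $\PC$ is a genuine stratification of $V$.

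Finally, I would verify that $f$ is piecewise $\PC$-regular. Any $T \in \PC$ lies in some $\TC_S$, so $T \subseteq S$ and $X \cap T = (X \cap S) \cap T$. The connected components of $X \cap T$ are therefore components of $(X\cap S)\cap T$, on each of which $\restr{f}{X\cap S}$—and hence $f$ itself—is regular by piecewise $\TC_S$-regularity of $\restr{f}{X\cap S}$. Combined with the continuity of $f$, this gives (a). I expect the only mildly delicate point to be the transitivity of ``Zariski locally closed'' and the appeal to the ambient-variety independence; both are bookkeeping rather than genuine obstacles, so the proof should be short, essentially a repetition of the argument sketched for Proposition~\ref{prop-2-3} with the obvious adjustments.
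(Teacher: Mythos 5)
Your proof is correct and follows essentially the same route as the paper: both arguments refine $\SC$ by a stratification witnessing piecewise-regularity of each $\restr{f}{X\cap S}$ and then glue. The only cosmetic difference is that the paper keeps $\TC_S$ as a stratification of $V$ and forms $\PC=\{S\cap T\colon S\in\SC,\ T\in\TC_S\}$, whereas you pass to stratifications of each $S$ and take the disjoint union; the two constructions yield the same family of strata.
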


\begin{proof}
It is clear that (\ref{prop-2-7-a}) implies (\ref{prop-2-7-b}).

Suppose that (\ref{prop-2-7-b}) holds for some stratification $\SC$ of
$V$. For each stratum $S \in \SC$ there exists a stratification $\TC_S$
of $V$ such that for every stratum $T \in \TC_S$ the restriction of $f$
to each connected component of $X \cap S \cap T$ is a regular map. Note
that
\begin{equation*}
\PC \coloneqq \{S \cap T \colon S \in \SC\ \textrm{and} \ T \in \TC_S\}
\end{equation*}
is a stratification of $V$, and the map $f$ is piecewise $\PC$-regular.
Thus (\ref{prop-2-7-b}) implies (\ref{prop-2-7-a}), as required.
\end{proof}

Given a real algebraic variety $V$, a subset $A \subseteq V$ is said to
be a \emph{nonsingular algebraic arc} if its Zariski closure $C$ in $V$
is an algebraic curve (that is, $\dim C =1$), $A \subseteq C \setminus
\Sing(C)$, and $A$ is homeomorphic to $\R$.

\begin{proposition}\label{prop-2-8}
Let $V$, $W$ be real algebraic varieties, and $f \colon X \to W$ a
continuous map defined on a semialgebraic subset $X \subseteq V$. Then
the following conditions are equivalent:
\begin{conditions}
\item\label{prop-2-8-a} The map $f$ is piecewise-regular.

\item\label{prop-2-8-b} There exists a stratification $\SC$ of $V$ such
that for every stratum $S \in \SC$ and every nonsingular algebraic arc
$A$ in $V$, with $A \subseteq X \cap S$, the restriction $\restr{f}{A}$
is a regular map.
\end{conditions}
\end{proposition}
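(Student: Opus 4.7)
The direction (a)$\Rightarrow$(b) is essentially formal. If $f$ is piecewise $\SC$-regular and $A$ is a nonsingular algebraic arc with $A \subseteq X \cap S$, then $A \cong \R$ is connected and hence contained in a single connected component $C$ of $X \cap S$. A regular extension of $\restr{f}{C}$ to a Zariski open neighborhood of $C$ in $\overline{C}^V$ restricts to $\overline{A}^V \subseteq \overline{C}^V$ and yields the required regular extension of $\restr{f}{A}$.

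For (b)$\Rightarrow$(a), my plan is to perform three reductions and then attack one substantive case. First, embedding $W \hookrightarrow \R^p$ as a Zariski closed subvariety and invoking Lemma~\ref{lem-2-1}, each coordinate function $f_j \colon X \to \R$ inherits hypothesis (b); a common refinement of stratifications witnessing the piecewise-regularity of the $f_j$ then witnesses the piecewise-regularity of $f$, so one may assume $W = \R$. Second, by Proposition~\ref{prop-2-7} it suffices to prove that $\restr{f}{X \cap S}$ is piecewise-regular for each $S \in \SC$, viewed with ambient variety $S$; a short check shows that a nonsingular algebraic arc in $V$ contained in $S$ is also a nonsingular algebraic arc in $S$ (the Zariski closure of $A$ in $S$ is $\overline{A}^V \cap S$, which remains a $1$-dimensional Zariski locally closed subvariety with $A$ in its smooth locus). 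Matters are thus reduced to the claim: for $V$ a real algebraic variety, $X \subseteq V$ semialgebraic and $f \colon X \to \R$ continuous, if $\restr{f}{A}$ is regular for every nonsingular algebraic arc $A \subseteq X$, then $f$ is piecewise-regular.

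To prove this claim I would induct on $d = \dim V$ and verify condition~(\ref{prop-2-6-c}) of Proposition~\ref{prop-2-6}. For any Zariski closed $Z \subseteq V$ with $\dim Z < d$, the arc hypothesis transfers to $(Z, X \cap Z, \restr{f}{X \cap Z})$ since an arc in $Z$ is also an arc in $V$; the inductive hypothesis gives piecewise-regularity of $\restr{f}{X \cap Z}$, and Proposition~\ref{prop-2-6}(\ref{prop-2-6-c}) then produces the required Zariski open dense $Z^0 \subseteq Z$. This leaves the case $Z$ of maximal dimension, and after passing to a top-dimensional irreducible component and removing its singular locus, we may assume $V$ itself is irreducible and nonsingular and take $Z = V$.

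The main obstacle is this remaining case: on an irreducible nonsingular $V$, produce a Zariski open dense $V^0 \subseteq V$ over which $f$ is regular on each connected component of $X \cap V^0$. If $X$ is not Zariski dense in $V$, one takes $V^0 = V \setminus \overline{X}^V$ and the condition is vacuous; otherwise, the plan is to analyse the Zariski closure $\overline{\Gamma} \subseteq V \times \R$ of the graph of $f$. By Tarski--Seidenberg, $\overline{\Gamma}$ is a real algebraic subvariety, and its projection to $V$ is dominant and generically finite; shrinking $V^0$ to a Zariski open dense subset where the projection is a finite unramified cover, one uses the curve selection lemma together with the arc-regularity hypothesis to exclude higher-degree algebraic branches, forcing $f$ on each connected component of $X \cap V^0$ to coincide with a single branch representable as a quotient $P/Q$ of polynomials; Lemma~\ref{lem-2-2} then identifies this representation with a regular function. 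The delicate point --- choosing $V^0$ uniformly so that the argument works simultaneously on all connected components of $X \cap V^0$ --- will require organising $X \cap V^0$ via a semialgebraic cell decomposition before running the arc-extraction argument.
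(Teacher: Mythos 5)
Your handling of (a) $\Rightarrow$ (b) and the two reductions --- to $W = \R$ via Lemma~\ref{lem-2-1} and, via Proposition~\ref{prop-2-7}, to showing $\restr{f}{X \cap S}$ piecewise-regular with ambient variety $S$ --- coincide with the paper's proof. (A small slip: for the second reduction what is actually needed is that a nonsingular algebraic arc in $S$ contained in $X \cap S$ is also one in $V$, so that hypothesis~(\ref{prop-2-8-b}) applies to it; you state the converse direction, which is the idle one, though both do hold.) At this point the paper finishes in one line by citing Proposition~3.5 of Koll\'ar--Kucharz--Kurdyka~\cite{bib41} on curve-rational functions, which does all the substantive work. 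You instead attempt to re-derive that ingredient by induction on $\dim V$ together with an analysis of the Zariski closure of the graph of $f$.

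The gap sits in the top-dimensional step of that re-derivation. Your plan requires the graph $\Gamma$ of $f$ to be semialgebraic, so that $\dim \overline{\Gamma} \leq \dim V$ and the projection $\overline{\Gamma} \to V$ can be arranged to be generically finite. But Proposition~\ref{prop-2-8}, unlike Theorem~\ref{th-2-9}, does \emph{not} assume $f$ semialgebraic --- only continuous. Without that, $\overline{\Gamma}$ could be all of $V \times \R$ and the ``finitely many branches'' picture disappears. Your appeal to Tarski--Seidenberg to say ``$\overline{\Gamma}$ is a real algebraic subvariety'' is a symptom of this confusion: any Zariski closure is algebraic by definition; what Tarski--Seidenberg would give is that $\Gamma$ itself is semialgebraic, and that presupposes $f$ semialgebraic. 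Extracting (generic) semialgebraicity of $f$ from the arc-regularity hypothesis is precisely part of what the cited result in~\cite{bib41} accomplishes, and your sketch neither establishes it nor sidesteps the need for it. On top of that, the ``exclude higher-degree branches uniformly across components'' step, which you yourself flag as delicate, remains an open obstacle even if semialgebraicity were granted.
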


\begin{proof}
We may assume that $W \subseteq \R^p$ is a Zariski closed subvariety.
Hence, by Lemma~\ref{lem-2-1}, the proof is reduced to the case $W=\R$.

It is clear that (\ref{prop-2-8-a}) implies (\ref{prop-2-8-b}).

Suppose that (\ref{prop-2-8-b}) holds for some stratification $\SC$ of
$V$. In view of \cite[Proposition~3.5]{bib41}, for
every stratum $S \in \SC$, the restriction $\restr{f}{X \cap S}$ is a
piecewise-regular map. Consequently, by Proposition~\ref{prop-2-7}, $f$
is a piecewise-regular map. Hence (\ref{prop-2-8-b}) implies
(\ref{prop-2-8-a}), as required.
\end{proof}

Piecewise-regular maps can be characterized among semialgebraic maps as
follows.

\begin{theorem}\label{th-2-9}
Let $V$, $W$ be real algebraic varieties, $X \subseteq V$ a
semialgebraic subset, and $f \colon X \to W$ a continuous semialgebraic
map. Then the following conditions are equivalent:
\begin{conditions}
\item\label{th-2-9-a} The map $f$ is piecewise-regular.

\item\label{th-2-9-b} For every nonsingular algebraic arc $A$ in $V$,
with $A \subseteq X$, the restriction $\restr{f}{A}$ is a
piecewise-regular map.

\item\label{th-2-9-c} For every nonsingular algebraic arc $A$ in $V$,
with $A \subseteq X$, there exists a nonemtpy open subset $A_0 \subseteq
A$ such that the restriction $\restr{f}{A_0}$ is a regular map.
\end{conditions}
\end{theorem}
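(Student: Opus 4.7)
The equivalence reduces to (a) $\Rightarrow$ (b), (b) $\Rightarrow$ (c), and the substantive (c) $\Rightarrow$ (a). For (a) $\Rightarrow$ (b), a stratification $\SC$ of $V$ witnessing piecewise-regularity of $f$ also witnesses that of $\restr{f}{A}$ for any arc $A \subseteq X$: each connected component of $A \cap S$, being connected and contained in $X \cap S$, lies inside a single component of $X \cap S$ on which $f$ is regular, and regularity is preserved under restriction to a subset. For (b) $\Rightarrow$ (c), let $C$ be the Zariski closure of $A$. Then $C$ is an irreducible algebraic curve (since $A$ is connected and lies in $C \setminus \Sing(C)$), and any stratification of $C$ realizing piecewise-regularity of $\restr{f}{A}$ has its positive-codimension strata contained in a finite subset $F \subseteq C$; hence $A \setminus F$ is a nonempty open subset of $A$ on each of whose connected components $\restr{f}{A}$ is regular, and any such component serves as the desired $A_0$.

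The heart of the proof is (c) $\Rightarrow$ (a), which I split into two steps. Step~1 shows that (c) implies $\restr{f}{A}$ is piecewise-regular for every arc $A \subseteq X$. Let
\begin{equation*}
U \coloneqq \{p \in A : \restr{f}{A'} \text{ is regular on some open connected neighborhood } A' \subseteq A \text{ of } p\}.
\end{equation*}
Every point of $A$ admits arbitrarily small open connected neighborhoods $A'$, each of which is itself a nonsingular algebraic arc (same Zariski closure~$C$, homeomorphic to~$\R$); applying (c) to $A'$ yields a nonempty open subset of $A'$ contained in $U$, so $U$ is dense in~$A$. Because $f$ is continuous semialgebraic and $A$ is a semialgebraic one-manifold, the piecewise-algebraic (Nash) structure of $\restr{f}{A}$ along $A$ forces $U$ to be semialgebraic, so $A \setminus U$ is closed semialgebraic with empty interior in $A \simeq \R$, hence finite. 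On each connected component of $U$, pointwise local regularity combined with Lemmas~\ref{lem-2-1} and~\ref{lem-2-2} yields regularity of the restriction, so the stratification of $C$ splitting off the finite set $A \setminus U$ exhibits $\restr{f}{A}$ as piecewise-regular.

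Step~2 deduces (a) by verifying condition~(b) of Proposition~\ref{prop-2-8}. I plan to produce a stratification $\SC$ of~$V$ by Noetherian induction: for each Zariski closed $Z \subseteq V$, construct a Zariski open dense $Z^0 \subseteq Z$ such that every nonsingular algebraic arc $A \subseteq X \cap Z^0$ has $\restr{f}{A}$ regular. The set of points $p \in Z$ at which some arc in $Z \cap X$ through $p$ fails regularity of $\restr{f}{A}$ is semialgebraic, and by Step~1 contributes only a finite bad set to each individual arc; rational extension of regular functions along nonsingular algebraic curves then confines this non-regularity locus to a proper Zariski closed subvariety of $Z$, whose complement is taken to be $Z^0$. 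Iterating and invoking Proposition~\ref{prop-2-8} delivers (a).

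The principal obstacle is Step~2: proving that the union, over all nonsingular algebraic arcs in $X \cap Z$, of the arc-wise failure points is Zariski nowhere dense in~$Z$. The finiteness from Step~1 is arc-by-arc, and the real work is in pooling these finite sets into a proper algebraic subvariety — a passage that needs semialgebraic finiteness, continuity of $f$, and the rational extension property on smooth curves in tandem.
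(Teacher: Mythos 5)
Your reduction to (c) $\Rightarrow$ (a) is correct, and your Step~1 and the sketch of (a) $\Rightarrow$ (b) $\Rightarrow$ (c) are fine in spirit (though Step~1 already has soft spots: showing that your set $U$ is semialgebraic, and that pointwise Euclidean-local regularity on a connected open subarc yields regularity on the whole component, both require a patching argument via Lemma~\ref{lem-2-2} and irreducibility of the Zariski closure $C$, which you wave at but do not carry out). The real problem, however, is the one you yourself flag: Step~2 is not a proof but a statement of intent. You explicitly say ``the principal obstacle is Step~2'' and describe the pooling of arc-wise bad sets into a Zariski closed subvariety as something that ``needs semialgebraic finiteness, continuity of $f$, and the rational extension property\ldots in tandem'' --- but you never supply that tandem. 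This is precisely where the content of the theorem lives, so the proposal has a genuine gap.

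The paper's mechanism for pooling is Lemma~\ref{lem-2-11}, whose engine is Lemma~\ref{lem-2-10}: if $N$ is a connected Nash submanifold and $f$ is a Nash function on $N$ that restricts to a regular function on some nonempty open subset of every nonsingular algebraic arc contained in $N$, then $f$ is \emph{rational}. One proves this by complexifying the (irreducible) Zariski closure of $N$ and of the graph of $f$ and arguing as in \cite[Proposition~2.5]{bib41}. Given this, Lemma~\ref{lem-2-11} stratifies the semialgebraic set $X$ so that $f$ is Nash on each piece, invokes Lemma~\ref{lem-2-10} on each connected Nash piece to get rationality, and then observes that a rational function is regular off a Zariski closed proper subvariety --- exactly the pooling you were looking for. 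The conclusion then drops out via Proposition~\ref{prop-2-6}(\ref{prop-2-6-c}), without routing through Proposition~\ref{prop-2-8} as you propose. Note also a mismatch in your plan: Proposition~\ref{prop-2-8}(\ref{prop-2-8-b}) demands $\restr{f}{A}$ be \emph{regular} for arcs in a stratum, whereas your Step~1 only delivers \emph{piecewise}-regularity of $\restr{f}{A}$, so even if Step~2's stratification existed, the pieces would not fit as written. The missing idea is the Nash-to-rational upgrade of Lemma~\ref{lem-2-10}; without it, the arc-by-arc finiteness cannot be globalized.
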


\begin{proof}
As in the proof of Proposition~\ref{prop-2-8}, we may assume that
$W=\R$.

Evidently, (\ref{th-2-9-a}) implies (\ref{th-2-9-b}), and
(\ref{th-2-9-b}) implies (\ref{th-2-9-c}).

Suppose that (\ref{th-2-9-c}) holds, and let $Z \subseteq V$ be a
Zariski closed subvariety. Let $Y$ be the Zariski closure of $X \cap Z$ in $V$. By Lemma~\ref{lem-2-11} below
(with $X$ replaced by $X \cap Z$), there exists a Zariski open dense
subset $Y^0 \subseteq Y$ such that the restriction of $f$ to each
connected component of $(X \cap Z) \cap Y^0$ is a regular function. Note
that $Z^0 \coloneqq Z \setminus (Y \setminus Y^0)$ is a Zariski open
dense subset of $Z$, and
\begin{equation*}
(X \cap Z) \cap Y^0 = X \cap Z^0.
\end{equation*}
Hence, in view of Proposition~\ref{prop-2-6}, condition (\ref{th-2-9-a})
holds.
\end{proof}

For background on Nash manifolds and Nash functions we refer to
\cite{bib6}. The following variant of \cite[Propositon~2.5]{bib41} will
be useful in the proof of Lemma~\ref{lem-2-11}.

\begin{lemma}\label{lem-2-10}
Let $N \subseteq \R^n$ be a connected Nash submanifold, and $f \colon N
\to \R$ a Nash function. Assume that for every nonsingular algebraic arc
$A$ in $\R^n$, with $A \subseteq N$, there exists a nonempty open subset
$A_0 \subseteq A$ such that the restriction $\restr{f}{A_0}$ is a
regular function. Then $f$ is a rational function.
\end{lemma}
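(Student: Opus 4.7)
The plan is to argue by contradiction, using the algebraicity of the Nash function $f$ together with a Bertini-type reduction to a well-chosen nonsingular algebraic arc inside $N$.

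Let $Z$ be the Zariski closure of $N$ in $\R^n$. Since $N$ is connected, I may pass to a single irreducible component in which $N$ is Zariski dense and so assume $Z$ irreducible. Because $f$ is Nash on $N$, it is algebraic over $\R[x_1,\ldots,x_n]$, hence over $\R(Z)$; let $P(x,t)\in\R(Z)[t]$ be the monic irreducible minimal polynomial of $f$ and put $d\coloneqq\deg_t P$. If $d=1$, then $P(x,t)=t-g(x)$ with $g\in\R(Z)$ and $f=g$, so $f$ is rational on $Z$ and the lemma is proved. I therefore assume $d\geq 2$ and aim for a contradiction.

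Let $\tilde\Gamma\subseteq Z\times\R$ denote the algebraic set cut out by $P(x,t)=0$; it is irreducible, and the projection $\pi\colon\tilde\Gamma\to Z$ is finite of generic degree~$d$. Fix a smooth point $z_0$ of $Z$ lying in $N$ at which $f$ is Nash. By a Bertini-type theorem I select a linear subspace $L\subseteq\R^n$ of codimension $\dim Z-1$ through $z_0$ such that: (i)~$C\coloneqq Z\cap L$ is an irreducible algebraic curve, nonsingular at $z_0$; (ii)~$L$ meets the Nash submanifold $N$ transversally at $z_0$; and (iii)~$P$ remains irreducible in $\R(C)[t]$, equivalently $\pi^{-1}(C)$ is an irreducible curve. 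Then any sufficiently small open arc $A\subseteq L\cap N$ through $z_0$ is a nonsingular algebraic arc in $\R^n$ with Zariski closure $C$, contained in $N$ by construction. The hypothesis of the lemma now supplies a nonempty open $A_0\subseteq A$, a Zariski open neighborhood $C_0\subseteq C$ of $A_0$, and a regular function $g\colon C_0\to\R$ with $\restr{g}{A_0}=\restr{f}{A_0}$. Since $P(x,f(x))=0$ on $A_0$, also $P(x,g(x))=0$ on $A_0$; as $A_0$ is an infinite subset of the irreducible curve $C_0$, Zariski density extends this identity to all of $C_0$. Hence $g\in\R(C)$ is a root of $P$ in $\R(C)[t]$, so the linear factor $t-g$ divides $P$ there, contradicting (iii) because $\deg_t P=d\geq 2$.

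The hard part of this plan is justifying step~(iii) over the real field: for a sufficiently general $\R$-linear subspace $L$ the restriction of the finite cover $\pi\colon\tilde\Gamma\to Z$ over $C=Z\cap L$ must remain irreducible, equivalently $P$ must stay irreducible over $\R(C)$. Over $\CB$ this follows from Bertini's irreducibility theorem applied to iterated hyperplane sections of the irreducible varieties $Z$ and $\tilde\Gamma$; the descent to $\R$ comes from the fact that the locus of complex linear subspaces satisfying (i), (iii), and the transversality condition~(ii) is Zariski open and nonempty in the relevant Grassmannian, and therefore contains real points through $z_0$. This is essentially the same Bertini ingredient used in the proof of the original \cite[Proposition~2.5]{bib41}, of which the present lemma is announced to be a variant.
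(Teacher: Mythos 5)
Your proposal is correct and follows essentially the same route as the paper: pass to the irreducible Zariski closure $Z$ of $N$, observe that the Zariski closure of the graph of $f$ (equivalently, the hypersurface cut out by the minimal polynomial $P$) is irreducible, complexify, and use a Bertini-type argument to produce a real algebraic curve $C$ through a smooth point of $N$ over which $P$ remains irreducible, which contradicts $\deg_t P\ge 2$ once the hypothesis hands you a rational root along an arc. The paper compresses the Bertini step into the citation of \cite[Proposition~2.5]{bib41}; that is precisely the ingredient you reconstruct, so the two proofs are the same in substance.
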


\begin{proof}
Let $V$ be the Zarsiki closure of $N$ in $\R^n$. Note that $V$ is
irreducible. Furthermore, the Zariski closure of the graph of $f$ in $V
\times \R$ is also irreducible. Complexifying these data, we complete
the proof arguing as in \cite[Propostion~2.5]{bib41}.
\end{proof}

In the proof of Theorem~\ref{th-2-9} we used the following.

\begin{lemma}\label{lem-2-11}
Let $V$ be a real algebraic variety, $X \subseteq V$ a semialgebraic
set, and $f \colon X \to \R$ a~semialgebraic function. Let $Y$ be the
Zariski closure of $X$ in $V$. Assume that for every nonsingular
algebraic arc $A$ in $V$, with $A \subseteq X$, there exists a nonempty
open subset $A_0 \subseteq A$ such that the restriction $\restr{f}{A_0}$
is a regular function. Then there exists a Zariski open dense subset
$Y^0 \subseteq Y$ such that the restriction of $f$ to each connected
component of $X \cap Y^0$ is a regular function.
\end{lemma}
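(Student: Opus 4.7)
The plan is to reduce to Lemma~\ref{lem-2-10} by means of a semialgebraic Nash stratification of $X$, then patch together the resulting rational extensions on top-dimensional strata.

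First, I would embed $V$ as a Zariski closed subvariety of $\R^m$ and use the standard theory of semialgebraic sets \cite{bib6} to partition $X$ into finitely many connected semialgebraic pieces $X_1, \ldots, X_s$ such that each $X_i$ is a connected Nash submanifold of $\R^m$, the restriction $\restr{f}{X_i}$ is a Nash function, and the frontier condition holds (the Euclidean frontier of each stratum is contained in the union of the strata of strictly smaller dimension). Because a connected Nash submanifold has irreducible Zariski closure, each $Y_i$ (the Zariski closure of $X_i$ in $V$) is irreducible and is contained in a unique irreducible component $Y^{(j(i))}$ of $Y$. Call $X_i$ \emph{top-dimensional} when $Y_i = Y^{(j(i))}$.

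Next, I would apply Lemma~\ref{lem-2-10} to each top-dimensional $X_i$, taking $N = X_i$ inside $\R^m$: any nonsingular algebraic arc $A$ in $\R^m$ contained in $X_i$ is also a nonsingular algebraic arc in $V$ contained in $X$, so the standing hypothesis supplies the required $A_0 \subseteq A$ on which $\restr{f}{A_0}$ is regular. The conclusion of Lemma~\ref{lem-2-10} then yields regular functions $p_i, q_i$ on $Y^{(j(i))}$ with $q_i \not\equiv 0$ and $f = p_i/q_i$ on $X_i \setminus Z(q_i)$.

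With $Y = Y^{(1)} \cup \cdots \cup Y^{(r)}$ the irreducible decomposition, I would set
\begin{equation*}
Y^0 \coloneqq \bigcup_{j=1}^{r} \left( Y^{(j)} \setminus \Bigl( \Sing(Y^{(j)}) \cup \bigcup_{j' \ne j} (Y^{(j)} \cap Y^{(j')}) \cup \bigcup_{i \colon Y_i \subsetneq Y^{(j)}} Y_i \cup \bigcup_{i \colon Y_i = Y^{(j)}} Z(q_i) \Bigr) \right),
\end{equation*}
which is Zariski open dense in $Y$ since each subtracted piece is a proper Zariski closed subvariety of the corresponding $Y^{(j)}$. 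On $Y^0$ every lower-dimensional stratum has been suppressed, so $X \cap Y^0$ is the disjoint union of the sets $X_i \cap Y^0$ over top-dimensional indices $i$.

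The remaining step, which I expect to require the most care, is to verify that each $X_i \cap Y^0$ is both open and closed in $X \cap Y^0$. Closedness is forced by the frontier condition together with the removal of lower-dimensional strata from $Y^0$. Openness uses that, near a point of $X_i \cap Y^0$, the stratum $X_i$ locally coincides with the smooth part of $Y^{(j(i))}$ (because their dimensions agree) and that no other top-dimensional stratum can accumulate on $X_i$, again for dimensional reasons. Consequently each connected component $C$ of $X \cap Y^0$ is contained in a single top-dimensional $X_i$, and the identity $\restr{f}{C} = \restr{(p_i/q_i)}{C}$ exhibits $\restr{f}{C}$ as regular in the sense of Definition~\ref{def-1-1}. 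The main obstacles are arranging a Nash stratification with all the listed properties (standard but technical) and executing the dimensional/frontier argument that separates the components of $X \cap Y^0$.
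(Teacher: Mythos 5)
Your argument is correct and follows essentially the same route as the paper: both proofs decompose $X$ into Nash pieces on which $f$ is Nash, apply Lemma~\ref{lem-2-10} on the top-dimensional pieces (whose Zariski closures are irreducible components of $Y$) to obtain rational representatives, and then pass to a Zariski open dense $Y^0$ that discards the singular loci, the lower-dimensional remainder, and the poles. The one noticeable stylistic difference is that the paper avoids invoking a full Nash stratification with the frontier condition: it sets $X^* = $ interior of $X\cap(Y\setminus\Sing Y)$ in $Y\setminus\Sing Y$, observes that $X\setminus X^*$ has Zariski nowhere dense closure, and collects $(X\setminus X^*)\cup\Sing Y$ plus the locus where $f$ fails to be Nash into a single nowhere dense closed set $S$; the connected components of $X\setminus S$ are then automatically open Nash manifolds in the smooth locus and no separate openness/closedness argument is needed. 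Your frontier/dimension argument for separating the components of $X\cap Y^0$ is correct, just a bit more work than the paper's shortcut.
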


\begin{proof}
We may assume that $V \subseteq \R^n$ is a Zariski closed subvariety.
Set $Y_0 = Y \setminus \Sing(Y)$, and let~$X^*$ be the interior of $X
\cap Y_0$ in $Y_0$. Then $X \setminus X^*$ is a semialgebraic subset of
$Y$ whose Zariski closure is Zariski nowhere dense in $Y$, cf.
\cite[Chapter~2]{bib6}. Furthermore, there exists a Zariski closed and
Zariski nowhere dense subvariety $S \subseteq Y$ such that
\begin{equation*}
(X \setminus X^*) \cup \Sing(Y) \subseteq S
\end{equation*}
and the restriction of $f$ to each connected component of $X \setminus
S$ is a Nash function. Since $X \setminus S$ is a semialgebraic set, it
has finitely many connected components. Hence, in view of
Lemma~\ref{lem-2-10}, there exists a Zariski open dense subset $Y^0
\subseteq Y$ which has the required properties.
\end{proof}

We next deal with piecewise-regular maps of class $\C^{\infty}$.
Initially, we consider functions on nonsingular real algebraic arcs.

\begin{lemma}\label{lem-2-12}
Let $C$ be a real algebraic curve, $A \subseteq C \setminus \Sing(V)$ a
nonsingular real algebraic arc, and $f \colon A \to \R$ a
piecewise-regular function of class $\C^{\infty}$. Then $f$ is a regular
function.
\end{lemma}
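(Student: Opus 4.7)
The plan is to exploit the fact that any stratification of an irreducible algebraic curve has exactly one cofinite stratum, reducing the piecewise-regular data on $A$ to finitely many regular pieces separated by finitely many points, and then to glue these pieces into a single rational function on the Zariski closure of $A$ using the $\C^\infty$ hypothesis.

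First I would reduce to the case where the Zariski closure $C'$ of $A$ in $C$ is an irreducible one-dimensional curve. Since distinct irreducible components of an algebraic variety meet only at singular points, and $A$ is connected and contained in $C \setminus \Sing(C)$, all of $A$ lies in a single irreducible component; since $A \cong \R$ is infinite, this component is one-dimensional, and by Zariski density it equals $C'$. Hence we may replace $C$ by $C'$ and assume $C$ is irreducible. Applying the piecewise-regular hypothesis, together with the observation that every Zariski locally closed subvariety of $C$ is either cofinite or finite, we obtain a finite set $F \subseteq A$ such that $A \setminus F$ is a disjoint union of finitely many open subarcs $A_1, \ldots, A_k$ on each of which $f$ is regular. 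Using $A \cong \R$, we may order the subarcs so that consecutive $A_i$, $A_{i+1}$ share a common endpoint $p_i \in F$.

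For each $i$, extend $f|_{A_i}$ to a regular function $\tilde f_i \colon U_i \to \R$ with $U_i$ Zariski open in $C$, so that $\tilde f_i$ is a rational function on $C$. The function $\tilde f_i$ agrees with $f$ on $A_i$, and is therefore bounded on the $A_i$-side of a small neighborhood of $p_i$; since $p_i$ is a smooth point of the one-dimensional curve $C$, this forces $\tilde f_i$ to have no pole at $p_i$ and to extend regularly across $p_i$. The same holds for $\tilde f_{i+1}$. Taking a local uniformizing parameter at $p_i$, both functions become real-analytic near $0$, and their Taylor expansions at $0$ are determined by the one-sided derivatives of $f$ from the $A_i$- and $A_{i+1}$-sides respectively. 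Because $f$ is $\C^\infty$ at $p_i$, these one-sided Taylor expansions coincide; by analytic continuation, $\tilde f_i$ and $\tilde f_{i+1}$ agree on a Euclidean neighborhood of $p_i$ and therefore, by irreducibility of $C$, as rational functions on $C$. Iterating shows that all $\tilde f_i$ coincide with a single rational function $\tilde f$ on $C$, regular on $U_1 \cup \cdots \cup U_k$.

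Finally, $\tilde f$ extends regularly across each remaining point $p \in F$: on a punctured neighborhood of $p$ in $A$ it equals $f$ and is therefore bounded by continuity, and a rational function bounded at a smooth point of a one-dimensional algebraic curve is regular there. Hence $\tilde f$ is regular on a Zariski open neighborhood of $A$ in $C$, exhibiting $f$ as regular in the sense of Definition~\ref{def-1-1}. The main obstacle is the gluing step at the points of $F$: promoting the equality of one-sided $\C^\infty$ Taylor expansions to equality of rational functions on $C$. This rests on two ingredients that are specific to the one-dimensional smooth setting, namely the analyticity of regular functions on a smooth irreducible real algebraic curve and the boundedness-implies-regularity principle at smooth points.
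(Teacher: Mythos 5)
Your proof is correct, but it takes a genuinely different route from the paper's. The paper first notes that $f$, being semialgebraic (as a piecewise-regular function on a semialgebraic set) and of class $\C^\infty$, is real-analytic by \cite[Proposition~8.1.8]{bib6}. It then picks a \emph{single} nonempty open piece $U \subseteq A$ on which $f$ agrees with a regular function $\varphi$ defined on a Zariski open dense $C^0 \subseteq C \setminus \Sing(C)$, extends $\varphi$ to a regular map $\psi \colon C \setminus \Sing(C) \to \PB^1(\R)$, and invokes the identity principle for real-analytic maps to get $f = \restr{\psi}{A}$ at one stroke; the possibility of poles of $\psi$ on $A$ is ruled out automatically because $f$ has values in $\R \subset \PB^1(\R)$. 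You instead decompose $A$ into finitely many subarcs separated by a finite set, extend each regular piece to a rational function on $C$, use boundedness to exclude poles at each separating point, match one-sided Taylor expansions there using the $\C^\infty$ hypothesis, and glue by analytic continuation across $C$. What the paper's $\PB^1(\R)$ trick buys is that it bypasses your entire pole-exclusion-and-gluing step: the identity principle is applied once globally rather than locally at each separating point, and the question of poles never arises because the map into $\PB^1(\R)$ is automatically everywhere defined. What your argument buys is that it only needs analyticity of \emph{regular} functions on a nonsingular curve (a standard fact) plus ordinary $\C^\infty$ continuity of $f$, so it avoids the semialgebraic-plus-$\C^\infty$-implies-analytic ingredient, at the price of more bookkeeping. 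Both arguments are sound; yours is more constructive but longer.
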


\begin{proof}
The function $f$ is analytic, being semialgebraic and of class
$\C^{\infty}$, cf. \cite[Proposition~8.1.8]{bib6}.

By definition of piecewise-regular, we can find a Zariski open dense
subset ${C^0 \subseteq C \setminus \Sing(C)}$, a regular function $\varphi
\colon C^0 \to \R$ and a nonempty open subset ${U \subseteq A \cap C^0}$
such that $\restr{f}{U} = \restr{\varphi}{U}$. Regarding $\R$ as a subset
of $\PB^1(\R)$, we get a regular map $\psi \colon C \setminus \Sing(C)
\to \PB^1(\R)$ with ${\restr{\psi}{C^0} = \varphi}$. Hence $f =
\restr{\psi}{A}$ by the identity principle for analytic maps.
Consequently, $f$ is a regular function.
\end{proof}

Lemma~\ref{lem-2-12} can be generalized as follows.

\begin{proposition}\label{prop-2-13}
Let $V$, $W$ be real algebraic varieties, and $f \colon U \to W$ a map
defined on an open subset $U \subseteq V \setminus \Sing(V)$. Then the
following conditions are equivalent:
\begin{conditions}
\item\label{prop-2-13-a} The map $f$ is piecewise-regular and of class
$\C^{\infty}$.

\item\label{prop-2-13-b} The restriction of $f$ to each connected
component of $U$ is a regular map.
\end{conditions}
\end{proposition}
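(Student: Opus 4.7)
The plan is to prove the two implications separately, with the substance lying in $(\ref{prop-2-13-a})\Rightarrow(\ref{prop-2-13-b})$. The converse is essentially formal: if each restriction $f|_{U_0}$ is regular on every connected component $U_0$ of $U$, then $f$ is piecewise $\{V\}$-regular, and such a regular map is automatically of class $\C^{\infty}$ on the nonsingular locus because locally it is represented as a ratio $P/Q$ with $Q$ nowhere zero (Lemma~\ref{lem-2-2}).

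For $(\ref{prop-2-13-a})\Rightarrow(\ref{prop-2-13-b})$, I would first embed $W\subseteq\R^p$ and reduce to the scalar case $W=\R$ via Lemma~\ref{lem-2-1}. Fix a connected component $U_0$ of $U$. Since $U_0$ lies in $V\setminus\Sing(V)$ and distinct irreducible components of $V$ meet only inside $\Sing(V)$, the set $U_0$ is contained in a single irreducible component; replacing $V$ by that component (as explained in the discussion following Definition~\ref{def-1-2}), I may assume $V$ is irreducible and that the Zariski closure of $U_0$ in $V$ is all of $V$.

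The core of the proof is local. For each $x\in U_0$, choose a small connected semialgebraic neighborhood $B\subseteq U_0$ of $x$, namely the connected component of $x$ in the intersection of $V$ with a Euclidean ball of sufficiently small radius. Then $f|_B$ is $\C^{\infty}$ and semialgebraic (piecewise-regular maps on semialgebraic domains are semialgebraic), hence Nash by \cite[Proposition~8.1.8]{bib6}. For every nonsingular algebraic arc $A\subseteq B$, the restriction $f|_A$ is piecewise-regular and $\C^{\infty}$, and so Lemma~\ref{lem-2-12} forces $f|_A$ to be regular. Lemma~\ref{lem-2-10} now applies and yields polynomials $P_x,Q_x$ with $Q_x\neq 0$ on $B$ and $f=P_x/Q_x$ on $B$; equivalently, $f|_B$ is the restriction of a rational function $\varphi_x$ on $V$ that is regular at every point of $B$.

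The final step is to patch these local data. If $B,B'$ are two such neighborhoods with $B\cap B'\neq\varnothing$, both $\varphi_x$ and $\varphi_{x'}$ equal $f$ on the Euclidean-open set $B\cap B'$; as a nonempty Euclidean-open subset of the smooth locus of the irreducible variety $V$, this intersection is Zariski dense in $V$, forcing $\varphi_x=\varphi_{x'}$ as rational functions. Using connectedness of $U_0$ (any two of its points are joined by a chain of overlapping such neighborhoods), all the $\varphi_x$ coincide with a single rational function $\varphi$ on $V$ that is regular at every point of $U_0$ and satisfies $\varphi|_{U_0}=f$. The regularity locus of $\varphi$ is Zariski open in $V$, so by Definition~\ref{def-1-1} the map $f|_{U_0}$ is regular, as required. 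I expect the main obstacle to be the local step, where the $\C^{\infty}$ assumption on $f$ genuinely enters through Lemma~\ref{lem-2-12} to verify the arc hypothesis of Lemma~\ref{lem-2-10}; without this smoothness one can only conclude that $f|_{U_0}$ is stratified-regular, as in Proposition~\ref{prop-2-4}.
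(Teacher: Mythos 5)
Your reduction to $W=\R$, the use of Lemma~\ref{lem-2-12} to verify the arc hypothesis, the reduction to a single irreducible component, and the local-to-global patching by connectedness are all sound, and the converse direction is correctly handled. However, there is a genuine gap at the crucial step: you claim that Lemma~\ref{lem-2-10} ``yields polynomials $P_x,Q_x$ with $Q_x\neq 0$ on $B$ and $f=P_x/Q_x$ on $B$,'' but the lemma concludes only that $f|_B$ is a \emph{rational} function, i.e.\ that it agrees with a quotient $P/Q$ on a Zariski open dense subset of $B$ --- it says nothing about $Q$ being nonvanishing on all of $B$. Your patching argument therefore produces a single rational function $\varphi$ on $V$ agreeing with $f$ on a Zariski dense open subset of $U_0$; it does not show that $\varphi$ is regular (pole-free) at every point of $U_0$, which is what Definition~\ref{def-1-1} requires.

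The missing ingredient is precisely the nontrivial fact that a $\C^\infty$ (or Nash) rational function on a connected open subset of a nonsingular variety is automatically regular; this is what the paper invokes as \cite[Proposition~2.1]{bib47} after having established rationality via \cite[Theorem~2.4]{bib41}. Your route through Lemma~\ref{lem-2-10} applied to small semialgebraic balls is a reasonable substitute for the citation of \cite[Theorem~2.4]{bib41} for establishing rationality, but you cannot dispense with the final rational-implies-regular step: the example $x^3/(x^2+y^2)$ shows that continuity alone does not suffice, and ruling out poles at points of $U_0$ genuinely requires exploiting the $\C^\infty$ hypothesis beyond what Lemma~\ref{lem-2-10} delivers.
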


\begin{proof}
As in the proof of Proposition~\ref{prop-2-8}, we may assume that $W = \R$.

Suppose that (\ref{prop-2-13-a}) holds and let $U_0$ be a connected
component of $U$. For each nonsingular algebraic arc $A$ in $V$, with $A
\subseteq U_0$, the restriction $\restr{f}{A}$ is a regular function by
Lemma~\ref{lem-2-12}. Hence, in view of \cite[Theorem~2.4]{bib41},
$\restr{f}{U_0}$ is a rational function. Consequently, $\restr{f}{U_0}$
is a regular function according to \cite[Proposition~2.1]{bib47}.

It is clear that (\ref{prop-2-13-b}) implies (\ref{prop-2-13-a}).
\end{proof}

We also have the following variant of Proposition~\ref{prop-2-13}.

\begin{proposition}\label{prop-2-14}
Let $V$, $W$ be real algebraic varieties, and $f \colon U \to W$ a map
defined on an open subset $U \subseteq V \setminus \Sing(V)$. Then the
following conditions are equivalent:
\begin{conditions}
\item\label{prop-2-14-a} The map $f$ is piecewise-regular, and for every
nonsingular algebraic arc $A$ in $V$, with $A \subseteq U$, the restriction
$\restr{f}{A}$ is of class $\C^{\infty}$.

\item\label{prop-2-14-b} The restriction of $f$ to each connected
component of $U$ is a stratified-regular map.
\end{conditions}
\end{proposition}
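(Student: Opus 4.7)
As in the proof of Proposition~\ref{prop-2-13}, we may assume $W = \R$ by virtue of Lemma~\ref{lem-2-1}. The implication (\ref{prop-2-14-b})$\Rightarrow$(\ref{prop-2-14-a}) is straightforward. First, if $f\restriction_{U_0}$ is stratified-regular for each connected component $U_0$ of $U$, then by taking a common refinement of the corresponding stratifications one sees that $f$ is piecewise-regular (each connected component of $U \cap S$ lies in some $U_0$, and $f\restriction_{U_0 \cap S}$ is regular). Second, for any nonsingular algebraic arc $A \subseteq U$ with Zariski closure $C$, Proposition~\ref{prop-2-4} gives that $f\restriction_{U_0}$ is continuous and rational, so $f\restriction_A$ agrees on a Zariski dense open subset $A_0 \subseteq A$ with the restriction to $A_0$ of a regular map $\psi \colon C \setminus \Sing(C) \to \PB^1(\R)$; by the identity principle for analytic maps (precisely as in Lemma~\ref{lem-2-12}) one concludes $f\restriction_A = \psi\restriction_A$, and since $f$ is $\R$-valued and continuous on $A$, $\psi$ has no pole on $A$, so $f\restriction_A$ is regular and in particular of class $\C^{\infty}$.

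For the converse (\ref{prop-2-14-a})$\Rightarrow$(\ref{prop-2-14-b}), the key preliminary step is to show that for every nonsingular algebraic arc $A \subseteq U$ the restriction $f\restriction_A$ is a \emph{regular} function (not merely $\C^{\infty}$). Let $\SC$ be a stratification of $V$ witnessing the piecewise-regularity of $f$ on $U$, and let $C$ be the Zariski closure of $A$. The induced stratification $\{S \cap C : S \in \SC\}$ of $C$ exhibits $f\restriction_A$ as a piecewise-regular map, because each connected component of $A \cap S$ is contained in a connected component of $U \cap S$ on which $f$ is regular. Combined with the hypothesis that $f\restriction_A$ is of class $\C^{\infty}$, Lemma~\ref{lem-2-12} yields that $f\restriction_A$ is regular, as desired.

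Let $U_0$ be a connected component of $U$. Applying the step above to every nonsingular algebraic arc $A \subseteq V$ with $A \subseteq U_0$ gives that $f\restriction_A$ is regular, so in view of \cite[Theorem~2.4]{bib41} (exactly as invoked in the proof of Proposition~\ref{prop-2-13}) the restriction $f\restriction_{U_0}$ is rational. Since $U_0$ is an open subset of $V \setminus \Sing(V)$ and $f$ is continuous, Proposition~\ref{prop-2-4} then implies that $f\restriction_{U_0}$ is stratified-regular, completing the proof. The main obstacle is the passage from ``$f\restriction_A$ regular for every nonsingular algebraic arc $A$'' to ``$f\restriction_{U_0}$ rational''; this is the step that, as in Proposition~\ref{prop-2-13}, relies on the external input \cite[Theorem~2.4]{bib41} and cannot be handled by Lemma~\ref{lem-2-12} alone.
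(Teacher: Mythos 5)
Your (a)~$\Rightarrow$~(b) argument is exactly the paper's: for each nonsingular algebraic arc $A \subseteq U_0$, deduce that $\restr{f}{A}$ is regular from Lemma~\ref{lem-2-12} (you helpfully make explicit why the stratification of $C$ induced by $\SC$ exhibits $\restr{f}{A}$ as piecewise-regular, which the paper leaves implicit), then invoke \cite[Theorem~2.4]{bib41} to get that $\restr{f}{U_0}$ is rational, and finally Proposition~\ref{prop-2-4} to upgrade to stratified-regular. This is the route taken in the paper.

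The paper dismisses (b)~$\Rightarrow$~(a) as clear; your elaboration is essentially right, but one step is circular as written. You appeal to ``the identity principle for analytic maps, precisely as in Lemma~\ref{lem-2-12}'' to conclude $\restr{f}{A} = \restr{\psi}{A}$. In Lemma~\ref{lem-2-12} the analyticity of $f$ on $A$ comes from the hypothesis that $f$ is $\C^{\infty}$ (and semialgebraic); here you are in the middle of \emph{proving} that $\restr{f}{A}$ is $\C^{\infty}$, so you cannot yet assert it is analytic. The repair is elementary and does not need analyticity: $\restr{f}{A}$ and $\restr{\psi}{A}$ agree on the dense open subset $A_0 \subseteq A$, $\restr{f}{A}$ is continuous and $\R$-valued, and at a nonsingular point of $C$ a pole of $\psi$ forces $\abs{\psi}$ to blow up along the curve; hence $\psi$ has no pole on $A$, and $\restr{f}{A} = \restr{\psi}{A}$ follows by density and continuity alone, after which $\restr{f}{A}$ is regular and in particular $\C^{\infty}$.
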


\begin{proof}
As in the proof of Proposition~\ref{prop-2-8}, we may assume that $W=\R$.

Suppose that (\ref{prop-2-14-a}) holds, and let $U_0$ be a connected
component of $U$. For each nonsingular algebraic arc $A$ in $V$, with $A
\subseteq U_0$, the restriction $\restr{f}{A}$ is a regular function by
Lemma~\ref{lem-2-12}. Hence, in view of \cite[Theorem~2.4]{bib41},
$\restr{f}{U_0}$ is a rational function. Consequently, $\restr{f}{U_0}$
is a stratified-regular function according to Proposition~\ref{prop-2-4}.

It is clear that (\ref{prop-2-14-b}) implies (\ref{prop-2-14-a}).
\end{proof}

\section{Functions on a simplex}\label{sec-3}

This section is of a technical nature. Our main goal is
Lemma~\ref{lem-3-7}, which is needed in Sections~\ref{sec-4} and
\ref{sec-6}. For the sake of clarity, we begin with some preliminary
facts.

\begin{lemma}\label{lem-3-1}
Let $V$ be a real algebraic variety, $W \subseteq V$ a Zariski closed
subvariety, and $f \colon W \to \R$ a regular function. Then there exists
a regular function $F \colon V \to \R$ such that $\restr{F}{W} = f$.
\end{lemma}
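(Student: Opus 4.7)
The plan is to reduce to the affine case and then clear denominators using a sum-of-squares trick. Since by definition $V$ is isomorphic to a Zariski closed subvariety of some $\R^n$, I may assume $V \subseteq \R^n$ is Zariski closed; then $W$, being Zariski closed in $V$, is also Zariski closed in $\R^n$.

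Next, I apply Lemma~\ref{lem-2-2} (condition~(\ref{lem-2-2-e})) to the regular function $f \colon W \to \R$ to obtain polynomials $P, Q \colon \R^n \to \R$ with $W \subseteq \R^n \setminus Z(Q)$ and $f = P/Q$ on $W$. Separately, I use that $W$ is Zariski closed in $\R^n$ to write it as the common zero set of finitely many polynomials $g_1, \ldots, g_k$, and set $g \coloneqq g_1^2 + \cdots + g_k^2$. The key real-field observation is that
\[
Z(g) = Z(g_1) \cap \cdots \cap Z(g_k) = W \quad \text{in } \R^n,
\]
so $W$ is realized as the zero set of a \emph{single} polynomial.

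Finally, I define
\[
F \coloneqq \frac{PQ}{Q^2 + g^2}
\]
and restrict to $V$. The denominator is strictly positive on $V$: on $W$ we have $g=0$ and $Q \neq 0$, while on $V \setminus W$ we have $g \neq 0$; hence $V \cap Z(Q^2+g^2) = \varnothing$. By Lemma~\ref{lem-2-2}, the restriction $F \colon V \to \R$ is regular, and on $W$ the polynomial $g$ vanishes, leaving $\restr{F}{W} = PQ/Q^2 = P/Q = f$, as required.

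There is really only one idea to locate here — the sum-of-squares identity that reduces a finitely generated ideal of definition to a single polynomial over $\R$ — which lets one modify the denominator $Q$ into a polynomial that is nowhere zero on $V$ without altering the values of $P/Q$ on $W$. Everything else is routine bookkeeping made available by Lemma~\ref{lem-2-2}, so I do not expect any serious obstacle.
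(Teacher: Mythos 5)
Your proof is correct and is essentially the paper's argument: the paper writes $f=\varphi/\psi$ with $\varphi,\psi$ regular on $V$ and $W\subseteq V\setminus Z(\psi)$, picks a regular $\alpha$ on $V$ with $Z(\alpha)=W$, and sets $F=\varphi\psi/(\alpha^2+\psi^2)$, which is exactly your sum-of-squares trick phrased for regular functions on $V$ rather than polynomials on $\R^n$. The only cosmetic difference is that you realize the ``defining function'' $\alpha$ explicitly as $g=g_1^2+\cdots+g_k^2$, while the paper simply asserts its existence.
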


\begin{proof}
By Lemma~\ref{lem-2-1}, there exist regular functions $\varphi, \psi
\colon V \to \R$ such that $W \subseteq V \setminus Z(\psi)$ and $f =
\varphi/\psi$ on $W$. Pick a regular function $\alpha \colon V \to \R$
with $Z(\alpha) = W$. Then the function $F \coloneqq
\frac{\varphi\psi}{\alpha^2 + \psi^2}$ has the required properties.
\end{proof}

For any real algebraic variety $V$, we let $\RC(V)$ denote the ring of
real-valued regular functions on $V$. If $W \subseteq V$ is a Zariski
closed subvariety, then the ideal
\begin{equation*}
I_V(W) = \{f \in \RC(V) \colon \restr{f}{W}=0\}
\end{equation*}
of $\RC(V)$ is called the \emph{ideal of $W$ in $V$}.

\begin{lemma}\label{lem-3-2}
Let $V$ be a real algebraic variety, and $W_1$, $W_2$ Zariski closed
subvarieties of $V$ for which
\begin{equation*}
I_V(W_1 \cap W_2) = I_V(W_1) + I_V(W_2) \ \textrm{\normalfont in} \ \RC(V).
\end{equation*}
Let $f \colon W_1 \cup W_2 \to \R$ be a function such that the
restrictions $\restr{f}{W_1}$ and $\restr{f}{W_2}$ are regular
functions. Then $f$ is a regular function.
\end{lemma}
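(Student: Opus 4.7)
The plan is to extend each piece to a regular function on the whole variety $V$ and then glue them using the hypothesis on ideals. First, I would apply Lemma~\ref{lem-3-1} twice: the restrictions $\restr{f}{W_1}$ and $\restr{f}{W_2}$ extend, respectively, to regular functions $F_1 \colon V \to \R$ and $F_2 \colon V \to \R$. The difference $F_1 - F_2$ is a regular function on $V$ that vanishes identically on $W_1 \cap W_2$, hence it lies in the ideal $I_V(W_1 \cap W_2)$.

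Next I would invoke the hypothesis, which says $I_V(W_1 \cap W_2) = I_V(W_1) + I_V(W_2)$, to write
\begin{equation*}
F_1 - F_2 = g_1 + g_2 \quad \text{with} \quad g_1 \in I_V(W_1),\ g_2 \in I_V(W_2).
\end{equation*}
Rearranging gives $F_1 - g_1 = F_2 + g_2$, so the single regular function $F \coloneqq F_1 - g_1 = F_2 + g_2$ on $V$ is well-defined. On $W_1$ the term $g_1$ vanishes, so $F = F_1 = f$ there; on $W_2$ the term $g_2$ vanishes, so $F = F_2 = f$ there. Hence $\restr{F}{W_1 \cup W_2} = f$, and restricting $F$ to the Zariski closed subvariety $W_1 \cup W_2$ exhibits $f$ as a regular function in the sense of Definition~\ref{def-1-1}.

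There is essentially no obstacle in this argument beyond the two easy applications of Lemma~\ref{lem-3-1}; the hypothesis $I_V(W_1 \cap W_2) = I_V(W_1) + I_V(W_2)$ is tailor-made to perform exactly this Mayer--Vietoris style gluing, and the only role of regularity of $f$ on each $W_i$ is to produce the extensions $F_i$ to start with.
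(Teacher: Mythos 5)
Your proof is correct and coincides with the paper's argument: both apply Lemma~\ref{lem-3-1} to extend each restriction to a regular function on $V$, use the ideal hypothesis to decompose the difference, and glue by subtracting the appropriate element of $I_V(W_1)$ (resp.\ adding one of $I_V(W_2)$). The only difference is an immaterial sign convention in how the decomposition is written.
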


\begin{proof}
By Lemma~\ref{lem-3-1}, there exists a regular function $g_i \colon V \to
\R$ with $\restr{g_i}{W_i} = \restr{f}{W_i}$ for $i=1,2$. Since $g_1 -
g_2 \in I_V(W_1 \cap W_2)$, we have $g_1 - g_2 = h_1 - h_2$, where $h_i
\in I_V(W_i)$. Hence $g_1 - h_1 = g_2 - h_2$ is a regular function on
$V$ whose restriction to $W_1 \cup W_2$ is equal to $f$. Consequently,
$f$ is a regular function.
\end{proof}

Let $V$ be a nonsingular real algebraic variety, and $D \subseteq V$ a
Zariski closed subvariety. We say that $D$ is a \emph{simple normal
crossing hypersurface} if for each point $p \in D$ there exist a Zariski
open neighborhood $U \subseteq V$ of $p$ and local coordinates $x_1,
\ldots, x_n$ on $U$ (a regular system of parameters at $p \in V$) such
that the intersection of each irreducible component of $D$ with $U$ is
given by the equation $x_i = 0$ for a suitable $i$. In particular, if
$D$ is a simple normal crossing hypersurface, then each irreducible
component of $D$ is nonsingular of codimension $1$.

\begin{lemma}\label{lem-3-3}
Let $V$ be a nonsingular real algebraic variety, $D \subseteq V$ a
simple normal crossing hypersurface, and $f \colon D \to \R$ a function
whose restriction to each irreducible component of $D$ is a regular
function. Then $f$ is a regular function.
\end{lemma}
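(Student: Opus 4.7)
The plan is to prove the lemma by induction on the number $r$ of irreducible components of $D$. The base case $r=1$ is immediate, since $f = \restr{f}{D}$ is regular by hypothesis. For the inductive step, index the components as $D_1, \ldots, D_r$, set $W_1 = D_1 \cup \cdots \cup D_{r-1}$ and $W_2 = D_r$, and observe that $W_1$ is again a simple normal crossing hypersurface in $V$ (the SNC condition is local, and a local description of $D$ by vanishing coordinates immediately restricts to one for $W_1$). Hence by the inductive hypothesis $\restr{f}{W_1}$ is regular, while $\restr{f}{W_2} = \restr{f}{D_r}$ is regular by assumption. Applying Lemma~\ref{lem-3-2} reduces the entire problem to the ideal-theoretic identity
\[ I_V(W_1 \cap W_2) = I_V(W_1) + I_V(W_2) \quad \text{in } \RC(V). \]

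The inclusion $\supseteq$ is trivial; the opposite inclusion is the main obstacle, and the SNC hypothesis enters precisely here. At every point $p \in V$ one picks a regular system of parameters $x_1, \ldots, x_n$ in which the irreducible components of $D$ through $p$ are coordinate hyperplanes. Labelling so that $x_k$ cuts out $D_r$ and $x_{i_1}, \ldots, x_{i_{k-1}}$ cut out the components of $W_1$ through $p$, a direct calculation in the regular local ring $\RC(V)_p$ yields
\[ I_V(W_1)_p = (x_{i_1}\cdots x_{i_{k-1}}), \quad I_V(W_2)_p = (x_k), \quad I_V(W_1\cap W_2)_p = (x_{i_1}\cdots x_{i_{k-1}},\, x_k), \]
so that $I_V(W_1 \cap W_2)_p = I_V(W_1)_p + I_V(W_2)_p$. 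The cases in which $p$ avoids $D_r$ or avoids all components $D_i$ with $i < r$ are trivial.

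The final step is to pass from this pointwise identity to equality in $\RC(V)$. Given $g \in I_V(W_1 \cap W_2)$, the local calculation above produces, for each $p \in V$, a Zariski open neighborhood $V_p \subseteq V$ of $p$ and regular functions $g_1^{(p)}, g_2^{(p)}$ on $V_p$ satisfying $g = g_1^{(p)} + g_2^{(p)}$, $\restr{g_1^{(p)}}{V_p \cap W_1} = 0$, and $\restr{g_2^{(p)}}{V_p \cap W_2} = 0$. One then globalizes by invoking the vanishing of first cohomology for coherent sheaves of ideals on the affine real algebraic variety $V$: the obstruction to gluing the pairs $(g_1^{(p)}, g_2^{(p)})$ is a cocycle with values in $I_V(W_1) \cap I_V(W_2) = I_V(W_1 \cup W_2)$, which vanishes. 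A more hands-on alternative is to use Noetherianity of the Zariski topology (as in the proof of Lemma~\ref{lem-2-2}) to reduce to finitely many neighborhoods, and then assemble the global decomposition from explicit regular functions $\alpha_i \in \RC(V)$ with $Z(\alpha_i) = D_i$ via a weighted-sum construction in the spirit of the identity $\varphi\psi/(\alpha^2 + \psi^2)$ used in the proof of Lemma~\ref{lem-3-1}.
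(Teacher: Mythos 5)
Your proof shares the paper's skeleton — induction on the number of components, splitting off one component, and invoking Lemma~\ref{lem-3-2} together with a local coordinate computation showing that the ideal of the intersection is the sum of the two ideals. The decomposition you use ($W_1 = D_1 \cup \cdots \cup D_{r-1}$, $W_2 = D_r$) is the same as the paper's up to relabeling.

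Where you diverge, and where your argument becomes heavier than it needs to be, is in insisting on the \emph{global} identity $I_V(W_1 \cap W_2) = I_V(W_1) + I_V(W_2)$ in $\RC(V)$. To promote the pointwise/local computation to a global one, you reach either for vanishing of $H^1$ of coherent ideal sheaves on the affine variety $V$, or for a vaguely sketched weighted-sum patching. The sheaf-cohomology route is in principle sound (real algebraic varieties in this paper's sense are affine, so quasi-coherent cohomology vanishes in positive degrees), but it is disproportionate machinery; the weighted-sum alternative as you state it is not actually carried out. The paper sidesteps the issue entirely: because regularity of a function is a \emph{local} property in the Zariski topology (this is exactly the equivalence (a)\,$\Leftrightarrow$\,(b) in Lemma~\ref{lem-2-2}), it suffices to show that for each $p \in D$ there is a Zariski open $U \ni p$ on which $f|_{D \cap U}$ is regular. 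Lemma~\ref{lem-3-2} is then applied with $V$ replaced by $U$, and the ideal identity is only needed in $\RC(U)$, where it follows immediately from the SNC coordinates. If you replace your globalization step by this appeal to Lemma~\ref{lem-2-2}(b), your argument collapses exactly onto the paper's, and no cohomology is needed.
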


\begin{proof}
We use induction on the number $k$ of irreducible components of $D$. The
case $k=1$ is obvious. Suppose that $k \geq 2$. Let $D_1$ be an
irreducible component of $D$, and let $D'$ be the union
of the remaining irreducible components. The restriction
$\restr{f}{D_1}$ is a regular function by assumption, whereas the
restriction $\restr{f}{D'}$ is a regular function by the induction
hypothesis.

Pick a point $p \in D_1 \cap D'$. It suffices to find a Zariski open
neighborhood $U \subseteq V$ of $p$ such that $\restr{f}{D \cap U}$ is a
regular function. If $U$ is small enough, there exist local coordinates
$x_1, \ldots, x_n$ on~$U$ such that the ideal $I_U(D_1 \cap U)$ is
generated by $x_1$, and the ideal $I_U(D' \cap U)$ is
generated by the product $x_2 \cdots x_l$ for some $l$ with $2 \leq l
\leq n$. Note that the ideal $I_U(D_1 \cap D' \cap U)$ is generated by
$x_1$ and $x_2 \cdots x_l$; in other words,
\begin{equation*}
I_U(D_1 \cap D' \cap U) = I_U(D_1 \cap U) + I_U(D' \cap U) \ \textrm{in}
\ \RC(U).
\end{equation*}
Hence $\restr{f}{D \cap U}$ is a regular function in view of
Lemma~\ref{lem-3-2}.
\end{proof}

We give next the following variant of Lemma~\ref{lem-3-1}.

\begin{lemma}\label{lem-3-4}
Let $V$ be a real algebraic variety, $Y \subseteq X$ some subsets of
$V$, and $f \colon Y \to \R$ a regular function. Assume that $Y = X \cap
W$, where $W$ is the Zariski closure of $Y$ in $V$. Then there exists a
regular function $F \colon V_0 \to \R$, defined on a Zariski open
neighborhood $V_0$ of $X$ in $V$, such that $\restr{F}{Y} = f$.
\end{lemma}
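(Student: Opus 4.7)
The plan is to imitate the construction from the proof of Lemma~\ref{lem-3-1}, arranged so that the resulting open set contains all of $X$ and not just $Y$. The starting point is Lemma~\ref{lem-2-2}, applied to the regular function $f \colon Y \to \R$ with ambient variety $W$ (which, as a Zariski closed subvariety of $V$, is itself isomorphic to a Zariski closed subset of some $\R^m$): this produces regular functions $\varphi, \psi \colon W \to \R$ with $Y \subseteq W \setminus Z(\psi)$ and $f = \varphi/\psi$ on $Y$. Lemma~\ref{lem-3-1} extends these to regular functions $\Phi, \Psi \colon V \to \R$. After picking a regular function $\alpha \colon V \to \R$ with $Z(\alpha) = W$ (the standard sum-of-squares construction already used in the proof of Lemma~\ref{lem-3-1}), one sets
\begin{equation*}
F \coloneqq \frac{\Phi\Psi}{\alpha^2 + \Psi^2}, \qquad V_0 \coloneqq V \setminus (W \cap Z(\Psi)).
\end{equation*}

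The denominator $\alpha^2 + \Psi^2$ vanishes precisely on $Z(\alpha) \cap Z(\Psi) = W \cap Z(\Psi)$, which is Zariski closed in $V$, so $F$ is a regular function on the Zariski open set $V_0$. The inclusion $X \subseteq V_0$ follows from
\begin{equation*}
X \cap (W \cap Z(\Psi)) = (X \cap W) \cap Z(\psi) = Y \cap Z(\psi) = \varnothing,
\end{equation*}
using the hypothesis $Y = X \cap W$ together with $Y \subseteq W \setminus Z(\psi)$. On $Y \subseteq W$ one has $\alpha|_Y = 0$ and $\Psi|_Y = \psi|_Y$ nowhere vanishing, hence $\restr{F}{Y} = \restr{(\Phi\Psi/\Psi^2)}{Y} = \restr{(\varphi/\psi)}{Y} = f$.

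The main obstacle --- and the reason the naive choice $F = \Phi/\Psi$ on $V \setminus Z(\Psi)$ does not work --- is that $\Psi$ may vanish at points of $X \setminus Y \subseteq V \setminus W$, so $Z(\Psi)$ need not be disjoint from $X$. The regularization $\Phi\Psi/(\alpha^2+\Psi^2)$ shrinks the bad set from $Z(\Psi)$ down to $W \cap Z(\Psi)$, and the hypothesis $Y = X \cap W$ is precisely what forces this smaller set to miss $X$.
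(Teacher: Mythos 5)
Your proof is correct and follows essentially the same route as the paper: the key device in both is the regularization $F = \Phi\Psi/(\alpha^2+\Psi^2)$ with $Z(\alpha)=W$, together with the observation that $Y = X\cap W$ forces $X$ to miss the zero set $W\cap Z(\Psi)$ of the denominator. The only (harmless) detour is that you first invoke Lemma~\ref{lem-2-2} with ambient variety $W$ and then extend to $V$ via Lemma~\ref{lem-3-1}, whereas the paper applies Lemma~\ref{lem-2-2} directly with ambient $V$ to produce $\varphi,\psi\colon V\to\R$ in one step.
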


\begin{proof}
By Lemma~\ref{lem-2-2}, there exist regular functions $\varphi, \psi
\colon V \to \R$ such that $Y \subseteq V \setminus Z(\psi)$ and $f =
\varphi / \psi$ on $Y$. Pick a regular function $\alpha \colon V \to \R$
with $Z(\alpha) = W$. Set $V_0 \coloneqq V \setminus Z(\alpha^2 +
\psi^2)$ and $F \coloneqq \frac{\varphi \psi}{\alpha^2 + \psi^2}$ on
$V_0$. Then $F$ has the required properties.
\end{proof}

\begin{notation}\label{not-3-5}
By a simplex in $\R^m$ we always mean a closed geometric simplex. For
any finite (geometric) simplicial complex $K$ in $\R^m$, we write $|K|$
for the union of all simplices in $K$; thus $|K| \subseteq \R^m$ is a
compact polyhedron. We denote by $K^{(n)}$ the $n$-skeleton of $K$.

If $\Delta \subseteq \R^m$ is a $d$-simplex, then $\dot{\Delta}$ stands
for the simplicial complex which consists of all faces of $\Delta$ of
dimension at most $d-1$. Clearly, $|\dot{\Delta}|$ is the union of all
$(d-1)$-dimensional faces of $\Delta$. The Zariski closure of $\Delta$
in $\R^m$, denoted by $H_{\Delta}$, is an affine subspace of dimension
$d$.
\end{notation}

\begin{lemma}\label{lem-3-6}
Let $\Delta \subseteq \R^m$ be a $d$-simplex and let $f \colon
|\dot{\Delta}| \to \R$ be a function such that the restriction
$\restr{f}{\Gamma} \colon \Gamma \to \R$ is a regular function for every
$(d-1)$-simplex $\Gamma \in \dot{\Delta}$. Then there exists a regular
function $F \colon \Delta \to \R$ with $\restr{F}{|\dot{\Delta}|} = f$.
\end{lemma}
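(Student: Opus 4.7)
I will construct $F$ by induction on the number of $(d-1)$-faces handled so far. Fix the vertices $v_0, \ldots, v_d$ of $\Delta$, denote by $\Gamma_i$ the $(d-1)$-face opposite $v_i$, and let $\lambda_0, \ldots, \lambda_d \colon H_\Delta \to \R$ be the barycentric coordinate functions (affine linear with $\lambda_0 + \cdots + \lambda_d \equiv 1$); then $H_i \coloneqq Z(\lambda_i) \subseteq H_\Delta$ is the Zariski closure of $\Gamma_i$. The target is a sequence $F_0, F_1, \ldots, F_d$, each regular on some Zariski open neighborhood of $\Delta$ in $H_\Delta$, with $\restr{F_k}{\Gamma_i} = \restr{f}{\Gamma_i}$ for every $i \leq k$; the desired $F$ is then $\restr{F_d}{\Delta}$.

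As a preparatory step, I will observe that any regular function $g$ on a face $\Gamma_i$ extends to a regular function defined on a Zariski open neighborhood of the whole of $\Delta$ in $H_\Delta$ (not merely of $\Gamma_i$). By Lemma~\ref{lem-2-2}, write $g = P/Q$ with $Q$ nonvanishing on $\Gamma_i$; then $\tilde{g} \coloneqq PQ/(Q^2 + \lambda_i^2)$ has denominator zero only on $Z(Q) \cap Z(\lambda_i)$, which is disjoint from $\Delta$ since $Z(\lambda_i) \cap \Delta = \Gamma_i$ avoids $Z(Q)$, and on $\Gamma_i$ itself the formula reduces to $P/Q = g$.

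The inductive step $k \to k+1$ is the heart of the argument. Form $h \coloneqq \restr{(f - F_k)}{\Gamma_{k+1}}$, a regular function on $\Gamma_{k+1}$ which, by the inductive assumption, vanishes on $\Gamma_{k+1} \cap \Gamma_i$ for every $i \leq k$. I will prove the divisibility claim that $h/(\lambda_0 \lambda_1 \cdots \lambda_k)$ extends to a regular function on $\Gamma_{k+1}$. Writing $h = P/Q$ via Lemma~\ref{lem-2-2} with $P, Q$ polynomials on $H_{k+1}$ and $Q$ nonvanishing on $\Gamma_{k+1}$, the polynomial $P$ vanishes on the Zariski dense subset $\Gamma_{k+1} \cap \Gamma_i$ of the affine hyperplane $H_{k+1} \cap H_i$ of $H_{k+1}$, hence on that entire hyperplane, and is therefore divisible by the defining linear form $\restr{\lambda_i}{H_{k+1}}$. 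Since $\restr{\lambda_0}{H_{k+1}}, \ldots, \restr{\lambda_k}{H_{k+1}}$ are pairwise nonproportional (their zero loci are distinct affine hyperplanes of $H_{k+1}$), they are pairwise coprime, and so $\lambda_0 \cdots \lambda_k$ divides $P$. Extending $h/(\lambda_0 \cdots \lambda_k)$ via the preparatory step to a regular function $\hat G_{k+1}$ on a Zariski open neighborhood of $\Delta$, I will set
\[ F_{k+1} \coloneqq F_k + \lambda_0 \lambda_1 \cdots \lambda_k \cdot \hat G_{k+1}. \]
For $i \leq k$ the correction term vanishes on $\Gamma_i$ (since $\restr{\lambda_i}{\Gamma_i} = 0$), preserving $\restr{F_{k+1}}{\Gamma_i} = \restr{F_k}{\Gamma_i} = \restr{f}{\Gamma_i}$; on $\Gamma_{k+1}$ the correction term equals $h$ by construction, giving $\restr{F_{k+1}}{\Gamma_{k+1}} = \restr{f}{\Gamma_{k+1}}$.

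The main obstacle is the divisibility claim, which rests on three elementary ingredients: Zariski density of a simplex face in its affine hull (to pass from vanishing on $\Gamma_{k+1} \cap \Gamma_i$ to vanishing on the whole flat $H_{k+1} \cap H_i$), the fact that a polynomial vanishing on an affine hyperplane is divisible by the defining linear form, and the pairwise coprimality of distinct nonproportional affine linear polynomials needed to promote divisibility by each factor to divisibility by their product.
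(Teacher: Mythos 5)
Your proof is correct, and it takes a genuinely different route from the paper's. The paper first extends each $f|_{\Gamma_i}$ to a regular function $G_i$ on a Zariski open neighborhood $\Omega$ of $\Delta$ in $H_\Delta$ (its Lemma~\ref{lem-3-4}, which your preparatory step reproduces explicitly with $\tilde g = PQ/(Q^2 + \lambda_i^2)$), then observes that the $G_i$ agree on overlaps, glues them into a single regular function $\varphi$ on the simple normal crossing divisor $D = (H_0 \cup \cdots \cup H_d) \cap \Omega$ using the ideal-theoretic identity $I(W_1 \cap W_2) = I(W_1) + I(W_2)$ for normal crossings (Lemmas~\ref{lem-3-2} and~\ref{lem-3-3}), and finally extends $\varphi$ off $D$ via Lemma~\ref{lem-3-1}. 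You instead build the extension by a successive correction $F_{k+1} = F_k + \lambda_0\cdots\lambda_k\,\hat G_{k+1}$, face by face, and replace the normal-crossing machinery by an explicit polynomial divisibility argument on the $(d-1)$-dimensional flat $H_{k+1}$: the vanishing locus conditions force the numerator $P$ of $h = (f - F_k)|_{\Gamma_{k+1}}$ to be divisible by each nonproportional linear form $\lambda_i|_{H_{k+1}}$, hence by their product, so that the correction factor $\hat G_{k+1}$ is well-defined and regular. The checks you sketch are all sound: $\Gamma_{k+1}\cap\Gamma_i$ is Zariski dense in $H_{k+1}\cap H_i$, a polynomial vanishing on a hyperplane is divisible by its defining linear form, and pairwise nonproportional degree-one polynomials are pairwise coprime in a UFD. (In the degenerate cases $d\le 1$, where some intersections $H_{k+1}\cap H_i$ are empty and the corresponding restricted $\lambda_i$ is a nonzero constant, divisibility is automatic, so the induction still closes.) What the paper's approach buys is that Lemmas~\ref{lem-3-2} and~\ref{lem-3-3} are stated in generality and reusable; what yours buys is that it avoids the simple-normal-crossing formalism entirely and reduces everything to first-principles polynomial algebra on affine flats, at the cost of carrying the barycentric coordinates through the induction.
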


\begin{proof}
Let $\Delta_0, \ldots, \Delta_d$ be all the $(d-1)$-dimensional faces of
$\Delta$. We set $H \coloneqq H_{\Delta}$ and $H_i \coloneqq
H_{\Delta_i}$ for $i=0,\ldots,d$. Obviously, $H_i \subseteq H$ and
$\Delta_i = \Delta \cap H_i$. By Lemma~\ref{lem-3-4}, there exists a
Zariski open neighborhood $\Omega \subseteq H$ of $\Delta$ and a regular
function $G_i \colon \Omega \to \R$ with $\restr{G_i}{\Delta_i} =
\restr{f}{\Delta_i}$ for $i=0,\ldots,d$. In particular, $G_i = G_j$ on
$\Delta_i \cap \Delta_j$ for all $i$, $j$. Since $H_i \cap H_j$ is the
Zariski closure of $\Delta_i \cap \Delta_j$ in~$\R^m$, we get $G_i =
G_j$ on $\Omega \cap H_i \cap H_j$.

Set $D_i \coloneqq \Omega \cap H_i$ and $D \coloneqq D_0 \cup \ldots
\cup D_d$. Then $D$ is a simple normal crossing hypersurface
in~$\Omega$. Define a function $\varphi \colon D \to \R$ by
$\restr{\varphi}{D_i} = \restr{G_i}{D_i}$ for $i=0,\ldots,d$. By
Lemma~\ref{lem-3-3}, $\varphi$ is a~regular function. In view of
Lemma~\ref{lem-3-1}, there exists a regular function $\Phi \colon \Omega
\to \R$ with $\restr{\Phi}{D} = \varphi$. The function $F \coloneqq
\restr{\Phi}{\Delta}$ has the required properties.
\end{proof}

We need the following approximation result for functions defined on a
simplex.

\begin{lemma}\label{lem-3-7}
Let $\Delta \subseteq \R^m$ be a $d$-simplex and let $f \colon \Delta \to
\R$ be a continuous function such that the restriction
$\restr{f}{\Gamma} \colon \Gamma \to \R$ is a regular function for every
$(d-1)$-simplex $\Gamma \in \dot{\Delta}$. Then, for every~$\varepsilon
> 0$, there exists a regular function $g \colon \Delta \to \R$
satisfying
\begin{equation*}
\abs{f(x) - g(x)} < \varepsilon \ \textrm{\normalfont for all} \ x \in \Delta
\end{equation*}
and $\restr{f}{|\dot{\Delta}|} = \restr{g}{|\dot{\Delta}|}$.
\end{lemma}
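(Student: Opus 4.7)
The plan is to reduce to approximating a continuous function that vanishes on the boundary of $\Delta$ by a regular function with the same boundary values, and then to achieve the approximation via a polynomial cut-off construction. First, Lemma~\ref{lem-3-6} yields a regular function $F \colon \Delta \to \R$ with $\restr{F}{|\dot{\Delta}|} = \restr{f}{|\dot{\Delta}|}$. Setting $h \coloneqq f - F$, the function $h$ is continuous on $\Delta$ and vanishes identically on $|\dot{\Delta}|$. It will then suffice to produce a regular function $h^{\ast} \colon \Delta \to \R$ that vanishes on $|\dot{\Delta}|$ and satisfies $\abs{h - h^{\ast}} < \varepsilon$ on $\Delta$, since then $g \coloneqq F + h^{\ast}$ has the required properties.

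For the main step I would use the barycentric coordinates $\lambda_0, \ldots, \lambda_d$ on $H_{\Delta}$ to form $\Lambda \coloneqq \lambda_0 \cdots \lambda_d$, a polynomial function on $H_{\Delta}$ that vanishes precisely on $|\dot{\Delta}|$ and is strictly positive on the interior of $\Delta$. Normalizing by $M \coloneqq \max_{\Delta} \Lambda$, put $\mu \coloneqq \Lambda/M$, so that $0 \le \mu \le 1$ on $\Delta$ and $\mu = 0$ exactly on $|\dot{\Delta}|$. For a positive integer $N$, introduce the polynomial $\phi_N \coloneqq 1 - (1-\mu)^N$; it vanishes on $|\dot{\Delta}|$ and satisfies $0 \le \phi_N \le 1$ on $\Delta$, while $1 - \phi_N = (1 - \mu)^N \to 0$ as $N \to \infty$ on any subset of $\Delta$ where $\mu$ is bounded away from $0$.

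By the Weierstrass approximation theorem applied on the compact set $\Delta$, pick a polynomial $P$ on $H_{\Delta}$ with $\abs{P - h} < \varepsilon/4$ on $\Delta$. Using continuity of $h$ and the fact that $h = 0$ on the compact set $|\dot{\Delta}|$, choose a neighborhood $U \subseteq \Delta$ of $|\dot{\Delta}|$ with $\abs{h} < \varepsilon/4$ on $U$, and then a constant $\delta > 0$ such that $\mu \ge \delta$ on the compact complement $\Delta \setminus U$. Finally, take $N$ large enough that $(1-\delta)^N \max_{\Delta} \abs{h} < \varepsilon/4$. Using the identity $h - P\phi_N = h(1-\phi_N) + (h - P)\phi_N$ and the bound $1 - \phi_N = (1-\mu)^N$, a routine case split on $U$ versus $\Delta \setminus U$ gives $\abs{h - P\phi_N} < \varepsilon/2 < \varepsilon$ on $\Delta$, while $h^{\ast} \coloneqq P\phi_N$ is a polynomial function that vanishes on $|\dot{\Delta}|$.

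The essential input is Lemma~\ref{lem-3-6}, which allows one to pass from the boundary extension problem to approximating a function vanishing on $|\dot{\Delta}|$; everything afterwards is a classical bump-polynomial argument. The only slightly delicate point, and the place most likely to need care, is the order of the choices: one must fix $U$ first using the continuity of $h$, and only then choose the cut-off exponent $N$, so that $\phi_N$ makes the error small on $U$ (where $\abs{h}$ itself is small because $h = 0$ on $|\dot{\Delta}|$) and simultaneously on $\Delta \setminus U$ (where $\mu$ is bounded below and $(1-\mu)^N$ can be driven below any threshold).
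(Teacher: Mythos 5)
Your proof is correct, and after the initial reduction it takes a genuinely different and more elementary route than the paper's. Both arguments begin identically: apply Lemma~\ref{lem-3-6} to obtain a regular function matching $f$ on $|\dot{\Delta}|$, and subtract it to reduce to the case $\restr{f}{|\dot{\Delta}|}=0$. After that, the paper proceeds through considerably heavier machinery: it extends the reduced function by Tietze to all of $H_\Delta$ so as to vanish on the full hyperplane arrangement $D=H_0\cup\cdots\cup H_d$, invokes the Whitney approximation theorem plus a $\C^\infty$ cutoff to produce a smooth approximant vanishing on $D$, appeals to the fact (citing Tognoli) that the ideal of $\C^\infty$ functions vanishing on $D$ is generated by finitely many polynomials $q_1,\ldots,q_r$, writes the smooth approximant as $\sum \psi_k q_k$, and finally applies Weierstrass to the coefficients $\psi_k$. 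Your construction sidesteps all of this with the explicit polynomial bump $\phi_N = 1-(1-\mu)^N$ built from the product of barycentric coordinates, together with a single Weierstrass approximation of the reduced function; the error split $h - P\phi_N = h(1-\phi_N) + (h-P)\phi_N$ and the two-region estimate (small $\abs{h}$ near $|\dot{\Delta}|$, $\mu$ bounded below off a neighborhood) are handled correctly, including the order of choices. The trade-off is that your argument is more self-contained and avoids the real-analytic ideal-generation input, while the paper's argument is structured so that the resulting polynomial vanishes on all of $D$ by ideal membership rather than by an explicit factor; for the statement at hand both deliver exactly what is needed.
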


\begin{proof}
According to Lemma~\ref{lem-3-6}, there exists a regular function $h
\colon \Delta \to \R$ with $\restr{h}{|\dot{\Delta}|} =
\restr{f}{|\dot{\Delta}|}$. By replacing $f$ with $f - h$, the proof is
reduced to the case $\restr{f}{|\dot{\Delta}|} = 0$.

Let $\Delta_0, \ldots, \Delta_d$ be all the $(d-1)$-dimensional faces of
$\Delta$. We set $H \coloneqq H_{\Delta}$ and $H_i \coloneqq
H_{\Delta_i}$ for $i=0, \ldots, d$. The union $D \coloneqq H_0 \cup
\ldots \cup H_d$ is a simple normal crossing hypersurface in $H$. The
function on $\Delta \cup D$ which is equal to $f$ on $\Delta$ and
identically equal to $0$ on $D$ is continuous. Hence, by Tietze's
extension theorem, there exists a continuous function $\varphi \colon H
\to \R$ with $\restr{\varphi}{\Delta} = f$ and $\restr{\varphi}{D} = 0$.

Fix $\varepsilon > 0$. Note that there exists a $\C^{\infty}$ function
$\psi \colon H \to \R$ satisfying
\begin{equation*}
\abs{\varphi(x) - \psi(x)} < \frac{\varepsilon}{2} \ \textrm{for all} \
x \in H
\end{equation*}
and $\restr{\psi}{D} = 0$. Indeed, by the Whitney approximation theorem
\cite[Theorem~10.16]{bib64}, one can find a~$\C^{\infty}$ function
$\lambda \colon H \to \R$ for which
\begin{equation*}
\abs{\varphi(x) - \lambda(x)} < \frac{\varepsilon}{4} \ \textrm{for all}
\ x \in H.
\end{equation*}
Since $\restr{\varphi}{D} = 0$, the set $U \coloneqq \{ x \in H \colon
\abs{\lambda(x)} < \frac{\varepsilon}{4} \}$ is an open neighborhood of
$D$ in $H$. If $\alpha \colon H \to \R$ is a $\C^{\infty}$ function with
$\restr{\alpha}{D} = 1$ and support contained in $U$, then the function
$\psi \coloneqq (1 - \alpha)\lambda$ has the required properties.

Denote by $\C^{\infty}(H)$ the ring of $\C^{\infty}$ real-valued
functions on $H$. One readily sees that the ideal $I \subseteq
\C^{\infty}(H)$ of all $\C^{\infty}$ functions vanishing on $D$ is
generated by polynomial functions, say, $q_1, \ldots, q_r$
(alternatively, one can invoke a much more general result \cite[p.~52,
Proposition~1]{bib78}). Consequently, $\psi$ can be written in the form
\begin{equation*}
\psi = \psi_1 q_1 + \cdots + \psi_r q_r,
\end{equation*}
where the $\psi_k$ are $\C^{\infty}$ functions on $H$. Let
\begin{equation*}
M \coloneqq \sup \{ \abs{q_k(x)} \colon x \in \Delta, \ k=1,\ldots,r \}.
\end{equation*}
By the Weierstrass approximation theorem, there exists a polynomial
function $p_k \colon H \to \R$ satisfying
\begin{equation*}
\abs{\psi_k(x) - p_k(x)} < \frac{\varepsilon}{2rM} \ \textrm{for all} \
x \in \Delta.
\end{equation*}
For $p \coloneqq p_1 q_1 + \cdots + p_r q_r$, we have
\begin{equation*}
\abs{\varphi(x) - p(x)} \leq \abs{\varphi(x) - \psi(x)} + \abs{\psi(x) -
p(x)} < \frac{\varepsilon}{2} + \frac{\varepsilon}{2} = \varepsilon \
\textrm{for all} \ x \in \Delta
\end{equation*}
and $\restr{p}{D} = 0$. We complete the proof setting $g \coloneqq
\restr{p}{\Delta}$.
\end{proof}

\section{Piecewise-regular maps into Grassmannians}\label{sec-4}

The role of Subsections~\ref{sec-4a} and~\ref{sec-4b} is to
review some notation and terminology.

\subsection{Inner product and matrices}\label{sec-4a}

As in Section~\ref{sec-1}, we let $\F$ denote $\R$, $\CB$ or
$\HB$. The $\F$-vector space $\F^n$ is endowed with the standard
inner product
\begin{equation*}
\langle -,- \rangle \colon \F^n \times \F^n \to \F
\end{equation*}
given by
\begin{equation*}
\langle (x_1, \ldots, x_n), (y_1, \ldots, y_n) \rangle =
\sum_{i=1}^n x_i \overline{y_i},
\end{equation*}
where $\overline{y_i}$ stands for the conjugate of $y_i$ in $\F$.

Let $\Mat_{m,n}(\F)$, or simply $\Mat_n(\F)$ if $m=n$, denote the
set of all $m$-by-$n$ matrices with entries in $\F$. For any
matrix $A = [a_{ij}] \in \Mat_{m,n}(\F)$, the corresponding
$\F$-linear transformation $L_A \colon \F^n \to \F^m$ is given by
\begin{equation*}
(x_1, \ldots, x_n) \mapsto (y_1, \ldots, y_m), \ \textrm{where} \
y_i = \sum_{j=1}^n x_j a_{ij} \ \textrm{for} \ i=1,\ldots, m
\end{equation*}
(recall that we always consider left $\F$-vector spaces). We will
identify $A$ with $L_A$ and write 
%---display only for better look
\begin{equation*}
A(v) = L_A(v) \ \textrm{for} \ v \in \F^n.
\end{equation*}
If $B = [b_{jk}] \in \Mat_{n,r}(\F)$, then we define the
product $AB = [c_{ik}]$ by
\begin{equation*}
c_{ik} = \sum_{j=1}^n b_{jk}a_{ij}.
\end{equation*}
This convention implies that $L_{AB} = L_A \circ L_B$.

We regard $\Mat_{n,r}(\F)$ as a real algebraic variety. If $1
\leq r \leq n$, then the subset 
%---display only for better look
\begin{equation*}
\Mat_{n,r}^0(\F) \subseteq \Mat_{n,r}(\F)
\end{equation*}
of all matrices with linearly independent
columns is Zariski open. Furthermore, the map
\begin{equation*}
\Mat_{n,r}^0(\F) \to \G_r(\F^n), \quad A \mapsto A(\F^r)
\end{equation*}
is a regular map, as is immediately seen by using the standard
charts on $\G_r(\F^n)$.

\subsection{Vector bundles}\label{sec-4b}

For any topological $\F$-vector bundle $\xi$ on a topological
space $X$, we denote by $E(\xi)$ its total space and by $p(\xi)
\colon E(\xi) \to X$ the bundle projection. The fiber of $\xi$
over a point $x \in X$ is $E(\xi)_x = p(\xi)^{-1}(x)$.

Given a nonnegative integer $n$, we let $\varepsilon_X^n(\F)$
denote the standard product $\F$-vector bundle on~$X$ with total
space $X \times \F^n$. Any morphism $\varphi \colon
\varepsilon_X^r(\F) \to \varepsilon_X^n(\F)$ of topological
$\F$-vector bundles is of the form
\begin{equation*}
\varphi(x,v) = ( x, A_{\varphi}(x)(v) ) \ \textrm{for all} \
(x,v) \in X \times \F^r,
\end{equation*}
where $A_{\varphi} \colon X \to \Mat_{n,r}(\F)$ is a uniquely
determined map, called the \emph{matrix representation} of
$\varphi$. Obviously, $A_{\varphi}$ is a continuous map.

If $\xi$ is a topological $\F$-vector subbundle of
$\varepsilon_X^n(\F)$, then $\varepsilon_X^n(\F) = \xi \oplus
\xi^{\perp}$, where $\xi^{\perp}$ is the orthogonal complement of
$\xi$ with respect to the standard inner product on $\F^n$. The
orthogonal projection $\rho_{\xi} \colon \varepsilon_X^n(\F) \to
\varepsilon_X^n(\F)$ onto $\xi$ is a topological morphism of
$\F$-vector bundles.

We will also consider algebraic vector bundles on a real
algebraic variety $V$. The product $V \times \F^n$ will be
regarded as a real algebraic variety. By an \emph{algebraic
$\F$-vector bundle on $V$} we mean an algebraic $\F$-vector
subbundle of $\varepsilon_V^n(\F)$ for some $n$ (cf.
\cite[Chapters~12 and~13]{bib6} and \cite{bib34, bib35} for
various characterizations of algebraic $\F$-vector bundles).

If $\varphi \colon \varepsilon_V^r(\F) \to \varepsilon_V^n(\F)$
is an algebraic morphism, then the matrix representation
%---display only for better look
\begin{equation*}
A_{\varphi} \colon V \to \Mat_{n,r}(\F)
\end{equation*}
of $\varphi$ is a regular map.

If $\xi$ is an algebraic $\F$-vector subbundle of
$\varepsilon_V^n(\F)$, then its orthogonal complement
$\xi^{\perp}$ is also an algebraic $\F$-vector subbundle, and the
orthogonal projection $\rho_{\xi} \colon \varepsilon_V^n(\F) \to
\varepsilon_V^n(\F)$ onto $\xi$ is an algebraic morphism.

The tautological $\F$-vector bundle $\gamma_r(\F^n)$ on
$\G_r(\F^n)$ is an algebraic $\F$-vector subbundle of
$\varepsilon_{\G_r(\F)}^n(\F^n)$.

\begin{lemma}\label{lem-4-1}
Let $V$ be a real algebraic variety, and $f \colon X \to
\G_r(\F^n)$ a continuous map defined on some subset $X \subseteq
V$. Then the map $P_f \colon X \to \Mat_n(\F)$, where $P_f(x)
\colon \F^n \to \F^n$ is the orthogonal projection onto $f(x)
\subseteq \F^n$ for all $x \in X$, is continuous. Furthermore, if
the map $f$ is regular, then so is the map $P_f$.
\end{lemma}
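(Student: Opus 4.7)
The plan is to factor $P_f$ through a universal orthogonal projection on the Grassmannian and reduce everything to the structural facts about algebraic vector bundles recalled in Subsection~\ref{sec-4b}. Concretely, I would define
\[
P \colon \G_r(\F^n) \to \Mat_n(\F)
\]
by sending each $r$-plane $L \subseteq \F^n$ to the matrix of the orthogonal projection $\F^n \to \F^n$ onto $L$. By construction, $P_f = P \circ f$, so the lemma reduces to showing that $P$ itself is a regular map.

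The main step is therefore to establish that $P$ is regular. For this I would invoke the facts just recorded in Subsection~\ref{sec-4b}: the tautological bundle $\gamma_r(\F^n) \subseteq \varepsilon^n_{\G_r(\F^n)}(\F)$ is an algebraic $\F$-vector subbundle, and the orthogonal projection onto an algebraic subbundle is always an algebraic morphism. Applied to $\xi = \gamma_r(\F^n)$, this yields that
\[
\rho_{\gamma_r(\F^n)} \colon \varepsilon^n_{\G_r(\F^n)}(\F) \to \varepsilon^n_{\G_r(\F^n)}(\F)
\]
is an algebraic morphism, whose matrix representation $\G_r(\F^n) \to \Mat_n(\F)$ is, fiber by fiber, the orthogonal projection onto the corresponding $r$-plane. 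Thus this matrix representation coincides with $P$, and matrix representations of algebraic morphisms are regular; hence so is $P$.

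With $P$ regular in hand, both conclusions of the lemma follow immediately. Continuity of $P_f$ is just continuity of the composition of the continuous maps $f$ and $P$. If $f$ is regular, Definition~\ref{def-1-1} supplies a Zariski open neighborhood $Z_0$ of $X$ in the Zariski closure of $X$ in $V$ together with a regular extension $\tilde f \colon Z_0 \to \G_r(\F^n)$; then $P \circ \tilde f \colon Z_0 \to \Mat_n(\F)$ is a regular map extending $P_f$, so $P_f$ is regular. I do not anticipate a real obstacle, since the algebraicity of $\gamma_r(\F^n)$ has already been imported from \cite{bib6}; the only minor point requiring care is to convince oneself that the matrix representation of $\rho_{\gamma_r(\F^n)}$ agrees pointwise with $P$, which is direct from the definitions. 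If one wished to avoid the bundle-theoretic machinery altogether, an alternative would be to verify regularity of $P$ in the standard affine charts on $\G_r(\F^n)$ via the explicit formula $L \mapsto A(A^\ast A)^{-1}A^\ast$ for a regular local frame $A$.
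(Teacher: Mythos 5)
Your proof is correct, and it takes a slightly different (and arguably cleaner) route than the paper. The paper forms the pullback $\xi = f^*\gamma_r(\F^n)$ as a topological subbundle of $\varepsilon_X^n(\F)$, observes that $P_f$ is the matrix representation of the orthogonal projection $\rho_\xi$ onto $\xi$, and derives continuity directly from that; for the regular case it reduces to $X$ Zariski locally closed so that $\xi$ becomes algebraic, and then reruns the same argument. You instead work once and for all on the Grassmannian: you define the universal projection $P \colon \G_r(\F^n) \to \Mat_n(\F)$, identify it as the matrix representation of $\rho_{\gamma_r(\F^n)}$, and conclude $P$ is regular from the very same structural facts in Subsection~\ref{sec-4b}. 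The factorization $P_f = P \circ f$ then disposes of both assertions by composition. The ingredients are identical (algebraicity of $\gamma_r(\F^n)$ and of orthogonal projections onto algebraic subbundles), but your version makes the universality explicit, avoids the pullback-to-$X$ step entirely, and makes both conclusions one-line corollaries of a single regularity claim.
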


\begin{proof}
We regard the pullback $\xi \coloneqq f^*\gamma_r(\F^n)$ as a
topological $\F$-vector subbundle of $\varepsilon_X^n(\F)$, hence
\begin{equation*}
E(\xi)_x = \{x\} \times f(x) \ \textrm{for all} \ x \in X.
\end{equation*}
It follows that $P_f$ is the matrix representation of the
orthogonal projection $\rho_{\xi} \colon \varepsilon_X^n(\F) \to
\varepsilon_X^n(\F)$ onto $\xi$. Consequently, $P_f$ is a
continuous map.

Now, suppose that $f$ is a regular map. It suffices to consider
the case where $X$ is a Zariski locally closed subvariety of $V$.
Then $\xi$ is an algebraic $\F$-vector subbundle of
$\varepsilon_X^n(\F)$, hence the argument above shows that $P_f$
is a regular map.
\end{proof}

\subsection{Maps into Grassmannians}\label{sec-4c}

We can now prove the following variant of Lemma~\ref{lem-3-7}.

\begin{lemma}\label{lem-4-2}
Let $\Delta \subseteq \R^m$ be a $d$-simplex and let $f \colon \Delta \to
\G_r(\F^n)$ be a continuous map such that the restriction
$\restr{f}{\Gamma} \colon \Gamma \to \G_r(\F^n)$ is a regular map for
every $(d-1)$-simplex $\Gamma \in \dot{\Delta}$. Then, for each
neighborhood $\UC \subseteq \C(\Delta, \G_r(\F^n))$ of $f$, there exists a
regular map $g \colon \Delta \to \G_r(\F^n)$ such that $g \in \UC$ and
$\restr{g}{|\dot{\Delta}|} = \restr{f}{|\dot{\Delta}|}$.
\end{lemma}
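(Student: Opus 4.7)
The plan is to reduce the Grassmannian-valued statement to the scalar case, Lemma~\ref{lem-3-7}, by passing from $f$ to a suitable matrix representation relative to a fixed ``reference'' $r$-dimensional $\F$-subspace $L \subseteq \F^n$, applying Lemma~\ref{lem-3-7} to each scalar component, and then composing with the regular parametrization $\Mat_{n,r}^0(\F) \to \G_r(\F^n)$ from Subsection~\ref{sec-4a}.

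Concretely, I first use Lemma~\ref{lem-4-1} to form the orthogonal projection map $P_f \colon \Delta \to \Mat_n(\F)$. It is continuous on $\Delta$, and applying Lemma~\ref{lem-4-1} to $\restr{f}{\Gamma}$ for every $(d-1)$-face $\Gamma \in \dot{\Delta}$ shows $\restr{P_f}{\Gamma}$ is regular. Next I choose an $r$-dimensional $\F$-subspace $L \subseteq \F^n$ with $L \cap f(x)^{\perp} = \{0\}$ for every $x \in \Delta$, so that $\restr{P_f(x)}{L} \colon L \to f(x)$ is an $\F$-linear isomorphism for every $x$. For a single $x$ the admissible $L$ form a nonempty Zariski open subset of $\G_r(\F^n)$; because $\Delta$ is compact and $f$ continuous, the locus of $L$ failing the transversality condition for some $x$ is closed with proper image in $\G_r(\F^n)$, so a common $L$ exists. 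Fixing a basis $e_1, \ldots, e_r$ of $L$, the map $A_f \colon \Delta \to \Mat_{n,r}(\F)$ whose $i$-th column is $P_f(-)(e_i)$ then takes values in $\Mat_{n,r}^0(\F)$, is continuous, is regular on each $(d-1)$-face of $\Delta$, and satisfies $A_f(x)(\F^r) = P_f(x)(L) = f(x)$ for every $x \in \Delta$.

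Viewing $\F$ as $\R^{d(\F)}$, the matrix-valued function $A_f$ has $nr \cdot d(\F)$ real-valued components, each continuous on $\Delta$ and regular on every $(d-1)$-face. I apply Lemma~\ref{lem-3-7} to each such component with a uniform error bound, producing a regular map $B \colon \Delta \to \Mat_{n,r}(\F)$ with $\restr{B}{|\dot{\Delta}|} = \restr{A_f}{|\dot{\Delta}|}$ and $B$ as close to $A_f$ in the supremum norm as desired. Since $\Mat_{n,r}^0(\F) \subseteq \Mat_{n,r}(\F)$ is open and $A_f(\Delta)$ is a compact subset of it, a sufficiently fine approximation yields $B(\Delta) \subseteq \Mat_{n,r}^0(\F)$. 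Setting $g(x) \coloneqq B(x)(\F^r)$ then defines a regular map $\Delta \to \G_r(\F^n)$ by composition with the regular parametrization $A \mapsto A(\F^r)$; it agrees with $f$ on $|\dot{\Delta}|$, and lies in $\UC$ provided the approximation was chosen fine enough, by continuity of $A \mapsto A(\F^r)$ and compactness of $\Delta$.

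The key obstacle is the reduction step itself: constructing a matrix representation of $f$ that inherits regularity on faces from $P_f$, which hinges on choosing $L$ in sufficiently general position relative to $f(\Delta)$. Once $A_f$ is available, everything else is a routine entrywise application of Lemma~\ref{lem-3-7} followed by composition with the regular map $A \mapsto A(\F^r)$ of Subsection~\ref{sec-4a}.
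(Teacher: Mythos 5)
Your overall strategy mirrors the paper's: reduce to $\Mat_{n,r}(\F)$-valued maps, apply Lemma~\ref{lem-3-7} entrywise, and compose with the regular parametrization $\Mat_{n,r}^0(\F) \to \G_r(\F^n)$. The difference, and the gap, is in how you produce a continuous matrix map $A \colon \Delta \to \Mat_{n,r}^0(\F)$ with $A(x)(\F^r) = f(x)$ that is regular on each $(d-1)$-face.

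You propose to take $A(x) = \restr{P_f(x)}{L}$ for a single fixed $r$-plane $L$ transverse to $f(x)^\perp$ for every $x \in \Delta$. Such a common $L$ need not exist. Already for $n=2$, $r=1$, $\F=\R$, $d=1$: let $\Delta = [0,1]$ and $f(t) = [\cos \pi t : \sin \pi t] \in \G_1(\R^2)$. This $f$ is continuous, its restriction to each $0$-face is trivially regular, and $f$ is surjective onto $\G_1(\R^2)$; consequently $t \mapsto f(t)^\perp$ is also surjective, so for every line $L$ there is a $t$ with $L = f(t)^\perp$, and no admissible $L$ exists. Your argument that the bad locus of $L$ is ``closed with proper image'' fails because the bad locus for a single $x$ is a hypersurface in $\G_r(\F^n)$, and the union of these hypersurfaces as $x$ ranges over a positive-dimensional family can (and in this example does) exhaust $\G_r(\F^n)$.

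The paper sidesteps this by using contractibility of $\Delta$: the pullback $\xi = f^*\gamma_r(\F^n) \subseteq \varepsilon_\Delta^n(\F)$ is topologically trivial, so there is a continuous injective bundle morphism $\varphi \colon \varepsilon_\Delta^r(\F) \to \varepsilon_\Delta^n(\F)$ with image $\xi$; its matrix representation $A$ is a \emph{continuous} (not constant-template) map with $A(x)(\F^r) = f(x)$. Since this $A$ is only continuous, the paper first Weierstrass-approximates it by a regular $B$, then sets $C(x) = P_f(x)B(x)$ to restore regularity on the faces while keeping $C(x)(\F^r) = f(x)$, and only then invokes Lemma~\ref{lem-3-7}. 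Replacing your fixed $L$ by this continuous trivialization (and adding the $C = P_f B$ intermediate step, since the trivialization itself is not regular on faces) repairs the argument.
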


\begin{proof}
Consider the map $P = P_f \colon \Delta \to \Mat_n(\F)$, where $P(x)
\colon \F^n \to \F^n$ is the orthogonal projection onto $f(x) \subseteq
\F^n$ for all $x \in \Delta$. By Lemma~\ref{lem-4-1}, $P$ is a
continuous map and the restriction $\restr{P}{\Gamma} \colon \Gamma \to
\Mat_n(\F)$ is a regular map for every $(d-1)$-simplex $\Gamma \in
\dot{\Delta}$.

We regard the pullback $\xi \coloneqq f^*\gamma_r(\F^n)$ as a
topological $\F$-vector subbundle of $\varepsilon_{\Delta}^n(\F)$. Since
$\xi$~is topologically trivial, there exists an injective topological
morphism $\varphi \colon \varepsilon_{\Delta}^r(\F) \to
\varepsilon_{\Delta}^n(\F)$ whose image is equal to $\xi$. Let $A =
A_{\varphi} \colon \Delta \to \Mat_{n,r}(\F)$ be the matrix
representation of $\varphi$. Then $A$ is a continuous map and
\begin{equation*}
P(x)A(x) = A(x) \ \textrm{for all} \ x \in \Delta.
\end{equation*}
By the Weierstrass approximation theorem, there exists a regular map $B
\colon \Delta \to \Mat_{n,r}(\F)$ arbitrarily close to $A$. Define $C
\colon \Delta \to \Mat_{n,r}(\F)$ by
\begin{equation*}
C(x) = P(x)B(x) \ \textrm{for all} \ x \in \Delta.
\end{equation*}
Then $C$ is a continuous map, close to $A$, such that the restriction
$\restr{C}{\Gamma} \colon \Gamma \to \Mat_{n,r}(\F)$ is a regular map
for every $(d-1)$-simplex $\Gamma \in \dot{\Delta}$. Hence, according to
Lemma~\ref{lem-3-7}, there exists a regular map $\Phi \colon \Delta \to
\Mat_{n,r}(\F)$, arbitrarily close to $C$, with
$\restr{\Phi}{|\dot{\Delta}|} = \restr{C}{|\dot{\Delta}|}$. In
particular, the $\F$-linear transformation $\Phi(x) \colon \F^r \to
\F^n$ is injective for all $x \in \Delta$. In other words, $\Phi(\Delta)
\subseteq \Mat_{n,r}^0(\F)$. Thus
\begin{equation*}
g \colon \Delta \to \G_r(\F^n), \quad g(x) = \Phi(x)(\F^r)
\end{equation*}
is a well-defined regular map, close to $f$. We may assume that $g \in
\UC$. Furthermore, $\restr{g}{|\dot{\Delta}|} =
\restr{f}{|\dot{\Delta}|}$ since $\restr{\Phi}{|\dot{\Delta}|} =
\restr{C}{|\dot{\Delta}|}$.
\end{proof}

An important consequence of Lemma~\ref{lem-4-2} is the following.

\begin{proposition}\label{prop-4-3}
Let $K$ be a finite simplicial complex in $\R^m$ and let $f \colon |K|
\to \G(\F^n)$ be a continuous map. Then, for each open neighborhood $\UC
\subseteq \C(|K|, \G(\F^n))$ of $f$, there exists a continuous map
$\varphi \colon |K| \to \G(\F^n)$ such that $\varphi \in \UC$ and the
restriction $\restr{\varphi}{\Delta} \colon \Delta \to \G(\F^n)$ is a
regular map for every simplex $\Delta \in K$.
\end{proposition}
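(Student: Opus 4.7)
The plan is to construct $\varphi$ by induction on the skeleta of $K$, upgrading the map on one additional simplex at a time via Lemma~\ref{lem-4-2}. Fix a metric $\rho$ on the compact space $\G(\F^n)$ compatible with its topology, and choose $\varepsilon > 0$ such that every continuous map $g \colon |K| \to \G(\F^n)$ with $\sup_{x \in |K|} \rho(f(x), g(x)) < \varepsilon$ lies in $\UC$; such an $\varepsilon$ exists because $|K|$ is compact and the compact-open topology on $\C(|K|, \G(\F^n))$ coincides with the topology of uniform convergence. Set $d \coloneqq \dim K$.

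I will produce continuous maps $\varphi^{(0)} \coloneqq f, \varphi^{(1)}, \ldots, \varphi^{(d)}$ from $|K|$ to $\G(\F^n)$ such that, for each $j \geq 1$, (i) $\varphi^{(j)}|_\Delta$ is a regular map for every simplex $\Delta \in K$ with $\dim \Delta \leq j$, (ii) $\varphi^{(j)}$ and $\varphi^{(j-1)}$ agree on $|K^{(j-1)}|$, and (iii) $\sup_{x \in |K|} \rho(\varphi^{(j)}(x), \varphi^{(j-1)}(x)) < \varepsilon / d$. Condition (i) holds trivially for $j = 0$ since a map from a single point is automatically regular; summing (iii) via the triangle inequality will then force $\varphi \coloneqq \varphi^{(d)}$ into $\UC$.

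For the inductive step, assume $\varphi^{(j-1)}$ has been constructed and let $\Delta \in K$ be a $j$-simplex. Because $\Delta$ is connected and $\G(\F^n)$ is the disjoint union of the Grassmannians $\G_r(\F^n)$, the restriction $\varphi^{(j-1)}|_\Delta$ takes values in a single $\G_r(\F^n)$; by the inductive hypothesis it is regular on each $(j-1)$-face of $\Delta$. Lemma~\ref{lem-4-2} then furnishes a regular map $g_\Delta \colon \Delta \to \G_r(\F^n)$ arbitrarily close to $\varphi^{(j-1)}|_\Delta$ and satisfying $g_\Delta|_{|\dot{\Delta}|} = \varphi^{(j-1)}|_{|\dot{\Delta}|}$. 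Since $K$ contains only finitely many $j$-simplices, the $g_\Delta$ may be chosen so that (iii) is achieved. Set $\varphi^{(j)} \coloneqq g_\Delta$ on each $j$-simplex $\Delta$ and $\varphi^{(j)} \coloneqq \varphi^{(j-1)}$ elsewhere on $|K|$.

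The only bookkeeping point is that this piecewise definition yields a well-defined continuous map. Well-definedness on the intersection of two distinct $j$-simplices is automatic, since each $g_\Delta$ equals $\varphi^{(j-1)}$ on $|\dot{\Delta}|$; and continuity at boundary points of $j$-simplices, as well as at points lying in simplices of dimension exceeding $j$, reduces to the same boundary agreement together with the continuity of $\varphi^{(j-1)}$. Simplices of dimension strictly less than $j$ are untouched at step $j$, so condition (i) propagates correctly. I do not expect any real obstacle here: Lemma~\ref{lem-4-2} is exactly the boundary-preserving approximation result tailored for this induction, and the only mild subtlety is verifying the continuity of the gluing, which is immediate from the prescribed boundary match.
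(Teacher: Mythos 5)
Your overall strategy—induct over skeleta and upgrade each simplex via Lemma~\ref{lem-4-2}, using a uniform metric to control the total drift—is the same as the paper's, but the explicit formula you give for $\varphi^{(j)}$ does not produce a continuous map when $j < d$, and the argument you offer to dismiss this is incorrect.

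Here is the problem. You set $\varphi^{(j)} \coloneqq g_\Delta$ on each $j$-simplex $\Delta$ and $\varphi^{(j)} \coloneqq \varphi^{(j-1)}$ elsewhere on $|K|$. Now suppose $j < d$ and $\Delta$ is a $j$-simplex which is a face of some $(j+1)$-simplex $\Delta'$. Take a point $x$ in the relative interior of $\Delta$; then $\varphi^{(j)}(x) = g_\Delta(x)$. But $g_\Delta$ agrees with $\varphi^{(j-1)}$ only on $|\dot\Delta|$, not on the interior of $\Delta$, so in general $g_\Delta(x) \neq \varphi^{(j-1)}(x)$. If $x_\nu \to x$ with $x_\nu$ in the relative interior of $\Delta'$ (hence $x_\nu \notin |K^{(j)}|$), then $\varphi^{(j)}(x_\nu) = \varphi^{(j-1)}(x_\nu) \to \varphi^{(j-1)}(x)$, which differs from $\varphi^{(j)}(x)$. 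So $\varphi^{(j)}$ is discontinuous at $x$. Your remark that ``continuity \dots\ at points lying in simplices of dimension exceeding $j$ reduces to the same boundary agreement'' is exactly where the gap sits: the boundary agreement is on $|\dot\Delta|$, while the mismatch is on the interior of $\Delta$, which is a boundary stratum of $\Delta'$ but not of $\Delta$ itself.

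The missing step is a re-extension: once you have defined a continuous map on $|K^{(j)}|$ (by $g_\Delta$ on $j$-simplices, $\varphi^{(j-1)}$ on lower strata—these do glue continuously on the closed set $|K^{(j)}|$), you must extend it to a continuous map on all of $|K|$, still close to $\varphi^{(j-1)}$, before the next iteration. This is precisely what the paper does: it phrases the argument as an induction on $d = \dim K$ rather than a finite iteration inside a fixed $K$, applies the inductive hypothesis to the subcomplex $K^{(d-1)}$ to get $\psi$ on $|K^{(d-1)}|$, extends $\psi$ to a continuous $\tilde\psi$ on $|K|$ lying in $\UC$, and only then invokes Lemma~\ref{lem-4-2} on the top-dimensional simplices, where there is nothing of higher dimension to break continuity. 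Inserting the analogous extension step into your iteration (and noting that a small continuous extension from a closed subpolyhedron exists, e.g.\ via a retraction of a neighborhood) repairs the proof and makes it essentially identical to the paper's.
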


\begin{proof}
We use induction on $d=\dim K$. The case $d=0$ is obvious. Suppose now
that $d \geq 1$. By the induction hypothesis, there exists a continuous
map $\psi \colon |K^{(d-1)}| \to \G(\F^n)$, arbitrarily close to
$\restr{f}{|K^{(d-1)}|}$, such that the restriction
$\restr{\psi}{\Gamma} \colon \Gamma \to \G(\F^n)$ is a regular map for
every simplex $\Gamma \in K^{(d-1)}$. It readily follows that $\psi$ has
a continuous extension $\tilde{\psi} \colon |K| \to \G(\F^n)$ that
belongs to~$\UC$. By Lemma~\ref{lem-4-2}, for every $d$-simplex $\Delta
\in K$, there exists a regular map $g_{\Delta} \colon \Delta \to
\G(\F)^n$ close to $\restr{\tilde{\psi}}{\Delta}$ and such that
$\restr{g_{\Delta}}{|\dot{\Delta}|} =
\restr{\tilde{\psi}}{|\dot{\Delta}|}$. Then the map $\varphi \colon |K|
\to \G(\F^n)$, defined by $\restr{\varphi}{|K^{(d-1)}|} = \psi$ and
$\restr{\varphi}{\Delta} = g_{\Delta}$ for every $d$-simplex $\Delta
\in K$, has all the required properties.
\end{proof}

In what follows, we will use stratifications constructed in a fairly
simple way. Any finite simplicial complex $K$ in $\R^m$ gives rise to a
filtration
\begin{equation*}
\FC_K = ( Z_0(K), Z_1(K), \ldots, Z_{m+1}(K) )
\end{equation*}
of $\R^m$, where $Z_0(K) = \R^m$, $Z_{m+1}(K) = \varnothing$, and
$Z_d(K)$ is the union of the $H_{\Delta}$ for all simplices $\Delta \in
K$ of dimension at most $m-d$ with $d = 1, \ldots, m$. Here, as in
Notation~\ref{not-3-5}, $H_{\Delta} \subseteq \R^m$ stands for the
Zariski closure of $\Delta$. Setting
\begin{equation*}
\SC_K \coloneqq \{Z_i(K) \setminus Z_{i+1}(K) \colon i=0,\ldots,m\},
\end{equation*}
we obtain a stratification of $\R^m$. More generally, if $V \subseteq
\R^m$ is a Zariski closed subvariety, then the collection
\begin{equation*}
\SC_K(V) \coloneqq \{V \cap S \colon S \in \SC_K\}
\end{equation*}
is a stratification of $V$, which is said to be \emph{induced by} $K$.
Obviously,
\begin{equation*}
\SC_K(\R^m) = \SC_K.
\end{equation*}

\begin{lemma}\label{lem-4-4}
Let $K$ be a finite simplicial complex in $\R^m$, $V \subseteq \R^m$ a
Zariski closed subvariety, and $X \subseteq |K| \cap V$ an arbitrary
subset. Then, for every stratum $T \in \SC_K(V)$, each connected
component of $X \cap T$ is contained in some simplex $\Delta \in K$.
\end{lemma}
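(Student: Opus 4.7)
The plan is to reduce to the case $V = \R^m$, $X = |K|$ (since a connected component of $X \cap T$ is contained in a connected component of $|K| \cap T$), and then exhibit an open partition of $|K| \cap T$ whose pieces each lie inside a single simplex. Writing $\Delta^\circ$ for the relative interior of $\Delta \in K$, we have the standard disjoint decomposition $|K| = \bigsqcup_{\Delta \in K} \Delta^\circ$. The goal becomes showing that, for each $\Delta \in K$, the set $|K| \cap T \cap \Delta^\circ$ is open in $|K| \cap T$; once established, the disjoint open cover $|K| \cap T = \bigsqcup_{\Delta}(|K| \cap T \cap \Delta^\circ)$ forces each connected component to lie inside a single $\Delta^\circ \subseteq \Delta$.

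To verify openness, set $T = T_i := Z_i(K) \setminus Z_{i+1}(K)$. A preliminary observation is that for any $y \in |K| \cap T_i$ the carrier $\sigma(y)$ satisfies $\dim \sigma(y) \geq m - i$, since $y \in H_{\sigma(y)} \subseteq Z_{m - \dim \sigma(y)}$ while $y \notin Z_{i+1}$. Now fix $x \in |K| \cap T_i \cap \Delta^\circ$; the open star $\mathrm{st}(\Delta) = \bigsqcup_{\tau \geq \Delta} \tau^\circ$ is a Euclidean-open neighborhood of $x$ in $|K|$ on which $\sigma(\cdot) \geq \Delta$. For $y \in \mathrm{st}(\Delta) \cap T_i$ close enough to $x$, one wants $\sigma(y) = \Delta$, whence $y \in \Delta^\circ$. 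In the opposing case $\sigma(y) \supsetneq \Delta$, one has $y \in \sigma(y)^\circ$ with $\dim \sigma(y) > \dim \Delta \geq m - i$, so membership in $T_i$ forces $y \in H_{\Delta'}$ for some $\Delta' \in K$ of dimension at most $m - i$. A short barycentric-coordinate calculation shows that $\sigma(y)^\circ$ is disjoint from the affine span of every proper face of $\sigma(y)$, so $\Delta'$ cannot be a face of $\sigma(y)$.

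The hard part of the proof is ruling out the remaining scenario: that a non-face $\Delta'$ satisfies $y \in H_{\Delta'} \cap \sigma(y)^\circ$ for some $y$ in every neighborhood of $x$. By local finiteness of $K$, only finitely many affine spans $H_{\Delta'}$ pass arbitrarily close to $x$, and one must use the combinatorial structure of the star of $\Delta$ in $K$—together with the condition $y \in T_i$—to conclude that any such relevant $\Delta'$ is in fact a face of $\sigma(y)$, contradicting the previous paragraph. This delicate interaction between the simplicial structure of $K$ and the affine-span stratification $\SC_K$ is the technical heart of the argument; once cleared, the openness claim and hence the lemma follow.
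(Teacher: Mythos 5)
Your proposal takes essentially the same route as the paper's one-line argument: reduce to $V = \R^m$ and $X = |K|$, then show that each connected component of $|K|\cap T$ lies inside a single simplex (the paper asserts this "by construction of $\SC_K$" without detail). You try to get it by showing that the relative-interior pieces $|K|\cap T\cap\Delta^\circ$ form an open partition of $|K|\cap T$, and you correctly isolate the real content: ruling out that some affine span $H_{\Delta'}$, with $\Delta'$ not a face of $\sigma(y)$, passes through $\sigma(y)^\circ$ arbitrarily near $x$. But you then stop, call that step the "technical heart," and assert that "one must conclude" the claim. That is a gap, not an argument.

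Moreover, the step cannot be carried out as stated, because it is false without extra hypotheses on $K$. Take $K$ in $\R^2$ consisting of two triangles $\Delta_1$, $\Delta_2$ sharing an edge $[a,b]$, together with a far-away, disjoint $1$-simplex $\Delta'$ (and all faces) whose line $H_{\Delta'}$ crosses $[a,b]$ transversally at a non-vertex point $p$. Then $p$ lies on $H_{[a,b]}$ and $H_{\Delta'}$ but is not a vertex, so $p\in T_1 = Z_1(K)\setminus Z_2(K)$; the two short segments of $H_{\Delta'}$ entering $\Delta_1^\circ$ and $\Delta_2^\circ$ from $p$ also lie in $T_1$, since they are in $H_{\Delta'}\subseteq Z_1(K)$ and miss $Z_2(K)$ (the finite set of vertices). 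Hence the connected component of $p$ in $|K|\cap T_1$ meets both $\Delta_1\setminus\Delta_2$ and $\Delta_2\setminus\Delta_1$, so is not contained in any one simplex of $K$, and your proposed partition is not open at $p$. The difficulty you flagged is therefore genuine: completing the proof requires an additional genericity assumption on the affine spans $H_\Delta$ (one that the paper itself leaves implicit), not just the combinatorics of the star of $\Delta$ that your sketch invokes.
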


\begin{proof}
It suffices to consider the case $V = \R^m$. If $S \in \SC_K(\R^m) =
\SC_K$, then each connected component of $X \cap S$ is contained in a
connected component of $|K| \cap S$, which in turn is contained in some
simplex $\Delta \in K$ by construction of $\SC_K$.
\end{proof}

We are now ready to prove the first two theorems announced in
Section~\ref{sec-1}.

\begin{proof}[Proof of Theorem~\ref{th-1-3}]
Let $n$ be a positive integer. We may assume that $V \subseteq \R^m$ and
$\G(\F^n) \subseteq \R^N$ are Zariski closed subvarieties.

Consider a continuous map $f \colon X \to \G(\F^n) \subseteq \R^N$. By
Tietze's extension theorem, there exists a continuous map $F \colon \R^m
\to \R^N$ with $\restr{F}{X} = f$. Let $\rho \colon T \to \G(\F^n)$ be a
tubular neighborhood of $\G(\F^n)$ in $\R^N$. Then $U \coloneqq
F^{-1}(T) \subseteq \R^m$ is a neighborhood of $X$, and $\tilde f \colon
U \to \G(\F^n)$, given by $\tilde f(x) = \rho(F(x))$ for $x \in U$, is
a continuous extension of $f$. Since $X$ is a compact set, we can find
a finite simplicial complex $K$ in $\R^m$ with $X \subseteq |K|
\subseteq U$. In view of Proposition~\ref{prop-4-3}, there exists a
continuous map $\varphi \colon |K| \to \G(F^n)$, arbitrarily close to
$\restr{\tilde f}{|K|}$, such that the restriction
$\restr{\varphi}{\Delta} \colon \Delta \to \G(\F^n)$ is a regular map
for every simplex $\Delta \in K$. According to Lemma~\ref{lem-4-4}, the
restriction $g \coloneqq \restr{\varphi}{X} \colon X \to \G(\F^n)$ is a
piecewise $\SC_K(V)$-regular map. The proof is complete since $g$ is
close to $f$.
\end{proof}

\begin{proof}[Proof of Theorem~\ref{th-1-4}]
It can be assumed that $V \subseteq \R^m$ is a Zariski closed
subvariety, hence ${X \subseteq \R^m}$. By Borsuk's theorem
\cite[p.~537]{bib26}, $X$ is a retract of some neighborhood $U \subseteq
\R^m$. We can find a finite simplicial complex $K$ in $\R^m$ with $X
\subseteq |K| \subseteq U$. Thus there exists a retraction $r \colon |K|
\to X$.

We claim that the induced stratification $\SC \coloneqq \SC_K(V)$ of $V$
has all the required properties. Indeed, let $n$ be a positive integer
and let $f \colon X \to \G(\F^n)$ be a continuous map. Then ${f \circ r \colon
|K| \to \G(\F^n)}$ is a continuous extension of $f$. By
Proposition~\ref{prop-4-3}, there exists a continuous map ${\varphi
\colon |K| \to \G(\F^n)}$, arbitrarily close to $f \circ r$, such that
the restriction $\restr{\varphi}{\Delta} \colon \Delta \to \G(\F^n)$ is
a regular map for every simplex $\Delta \in K$. In view of
Lemma~\ref{lem-4-4}, the restriction $g \coloneqq \restr{\varphi}{X} \colon
X \to \G(\F^n)$ is a piecewise $\SC$-regular map. This completes the
proof since $g$ is close to $f$.
\end{proof}

We also have the following variant of Theorem~\ref{th-1-4}.

\begin{theorem}\label{th-4-5}
Let $X_0 \subseteq \R^m$ be a compact subset, $U \subseteq \R^m$ a
neighborhood of $X_0$, $\beta \colon U \to U$ a~homeomorphism, and $K$
is a finite simplicial complex in $\R^m$. Assume that $X_0$ is a retract
of $U$ and
\begin{equation*}
X \coloneqq \beta(X_0) \subseteq |K| \subseteq U.
\end{equation*}
Then, for each positive integer $n$, every continuous map from $X$ into
$\G(\F^n)$ can be approximated by piecewise $\SC_K$-regular maps.
\end{theorem}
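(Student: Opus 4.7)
The plan is to mimic the proof of Theorem~\ref{th-1-4}, the only new ingredient being that a retraction onto $X$ (rather than $X_0$) must be produced from the given data. Since $X_0$ is a retract of $U$, fix a retraction $r_0\colon U\to X_0$. Because $\beta\colon U\to U$ is a homeomorphism carrying $X_0$ onto $X$, the composition
\begin{equation*}
r \coloneqq \beta\circ r_0\circ \beta^{-1}\colon U\to X
\end{equation*}
is well-defined and continuous, and for $y\in X$ we have $\beta^{-1}(y)\in X_0$, so $r_0(\beta^{-1}(y))=\beta^{-1}(y)$ and hence $r(y)=y$. Thus $r$ is a continuous retraction of $U$ onto $X$. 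In particular, since $|K|\subseteq U$, the restriction $r_K\coloneqq r|_{|K|}\colon |K|\to X$ is a continuous retraction as well.

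Now fix a positive integer $n$, a continuous map $f\colon X\to\G(\F^n)$, and a neighborhood of $f$ in $\C(X,\G(\F^n))$. The composition $f\circ r_K\colon |K|\to\G(\F^n)$ is a continuous extension of $f$ to the polyhedron $|K|$. By Proposition~\ref{prop-4-3} applied to this extension, there exists a continuous map $\varphi\colon |K|\to\G(\F^n)$, arbitrarily close to $f\circ r_K$, such that $\restr{\varphi}{\Delta}\colon\Delta\to\G(\F^n)$ is regular for every simplex $\Delta\in K$. In particular, by choosing $\varphi$ sufficiently close to $f\circ r_K$ (and noting that $f\circ r_K$ restricts to $f$ on $X$), the map $g\coloneqq\restr{\varphi}{X}\colon X\to\G(\F^n)$ lies in the prescribed neighborhood of $f$.

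It remains to verify that $g$ is piecewise $\SC_K$-regular. Apply Lemma~\ref{lem-4-4} with $V=\R^m$: for every stratum $T\in\SC_K=\SC_K(\R^m)$, each connected component of $X\cap T$ is contained in some simplex $\Delta\in K$. Since $\restr{\varphi}{\Delta}$ is regular, the restriction of $g$ to such a component is regular as well. Thus $g$ is piecewise $\SC_K$-regular, which completes the proof. The only real subtlety is the construction of $r$ out of $r_0$ and $\beta$; once that is in place, everything reduces to Proposition~\ref{prop-4-3} together with Lemma~\ref{lem-4-4}, exactly as in Theorem~\ref{th-1-4}.
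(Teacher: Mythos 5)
Your proof is correct and follows essentially the same route as the paper: construct the retraction $r = \beta \circ r_0 \circ \beta^{-1}$ onto $X$, extend $f$ to $|K|$ by $f\circ r$, apply Proposition~\ref{prop-4-3}, and invoke Lemma~\ref{lem-4-4} with $V=\R^m$ to conclude piecewise $\SC_K$-regularity of the restriction to $X$.
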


\begin{proof}
Let $r_0 \colon U \to X_0$ be a retraction. Then $r_X \colon U \to X$,
given by
\begin{equation*}
r_X(x) = \beta( r_0( \beta^{-1}(x) ) ) \ \textrm{for all} \ x \in U,
\end{equation*}
is a well-defined retraction.

Let $n$ be a positive integer and let $f \colon X \to \G(\F^n)$ be a
continuous map. Since $f \circ r_X \colon U \to \G(\F^n)$ is a
continuous extension of $f$, we complete the proof as in the case of
Theorem~\ref{th-1-4}.
\end{proof}

Theorem~\ref{th-4-5} can be illustrated by revisiting
Example~\ref{ex-1-7}.

\begin{example}\label{ex-4-6}
Fix an integer $m \geq d(\F) + 1$, and consider $\T^m$ as a subset of
$\R^{2m} \times \R = \R^{2m+1}$. Note that $\T^m$ is a retract of $U
\coloneqq (\R \setminus \{0\})^{2m} \times \R$. Let $K$ be a finite
simplicial complex in $\R^{2m+1}$ satisfying
\begin{equation*}
\T^m \subseteq \Int |K| \subseteq |K| \subseteq U.
\end{equation*}
According to Example~\ref{ex-1-7}, we can find a nonsingular Zariski
closed subvariety $V \subseteq \R^{2m+1}$, a $\C^{\infty}$
diffeomorphism $\beta\colon U \to U$, and a $\C^{\infty}$ map
\begin{equation*}
h \colon V \to \G_1(\F^2) = \SB^{d(\F)}
\end{equation*}
such that $V = \beta(\T^m) \subseteq |K|$ and $h$ is not homotopic to
any stratified-regular map.

By Theorem~\ref{th-4-5}, for each positive integer $n$, every continuous
map from $V$ into $\G(\F^n)$ can be approximated by piecewise
$\SC_K$-regular maps. In particular, this is the case for the map $h$.
\end{example}

\section{Piecewise-algebraic vector bundles}\label{sec-5}

Comparison of algebraic and topological vector bundles on a given real
algebraic variety $V$ is a challenging problem, cf. \citeleft
\citen{bib2}\citedash\citen{bib6}\citepunct \citen{bib11}\citepunct
\citen{bib12}\citepunct \citen{bib14}\citepunct \citen{bib21}\citepunct
\citen{bib22}\citepunct \citen{bib25}\citepunct \citen{bib27}\citepunct
\citen{bib31}\citepunct \citen{bib39}\citepunct
\citen{bib44}\citedash\citen{bib46}\citepunct \citen{bib74}\citepunct
\citen{bib76}\citepunct \citen{bib77}\citeright. Unless the variety $V$
is quite exceptional, one can find a topological vector bundle on $V$
that is not topologically isomorphic to any algebraic vector bundle. On
the other hand, in some sense, algebraization of topological vector
bundles is possible, cf. \cite{bib2, bib46, bib66, bib67, bib76, bib77}.
In this section we look at the algebraization problem from a new
perspective. The main results are Theorems~\ref{th-5-10}
and~\ref{th-5-11}, derived from Theorems~\ref{th-1-3} and~\ref{th-1-4},
respectively.

We will use freely notation introduced in Section~\ref{sec-4}. Moreover,
given a topological space $X$, a subspace $A \subseteq X$, and a
topological morphism $\psi \colon \theta \to \omega$ of topological
$\F$-vector bundles on $X$, we let $\psi_A \colon \restr{\theta}{A} \to
\restr{\omega}{A}$ denote the restriction morphism defined by $\psi_A(v)
= \psi(v)$ for all $v \in E(\restr{\theta}{A})$.

We first generalize the definition of algebraic vector bundle.

\begin{definition}\label{def-5-1}
Let $V$ be a real algebraic variety, $X \subseteq V$ some subset, and
$Z$ the Zariski closure of $X$ in $V$.

An \emph{algebraic $\F$-vector bundle} $\xi$ on $X$ is a topological
$\F$-vector subbundle of $\varepsilon_X^n(\F)$, for some~$n$, for which
there exist a Zariski open neighborhood $Z_0 \subseteq Z$ of $X$ and an
algebraic $\F$-vector subbundle~$\tilde{\xi}$ of
$\varepsilon_{Z_0}^n(\F)$ with $\restr{\tilde{\xi}}{X} = \xi$. In that
case, $\xi$ is also said to be an \emph{algebraic $\F$-vector subbundle}
of $\varepsilon_X^n(\F)$. The pair $(Z_0,\tilde{\xi})$ is called an
\emph{algebraic extension} of $\xi$. In particular,
$\varepsilon_X^n(\F)$ is an algebraic $\F$-vector bundle on $X$.

If $\xi$ and $\eta$ are algebraic $\F$-vector bundles on $X$, then an
\emph{algebraic morphism} $\varphi \colon \xi \to \eta$ is a topological
morphism such that there exist algebraic extensions $(Z_0,
\tilde{\xi})$, $(Z_0, \tilde{\eta})$ of $\xi$, $\eta$, respectively,
and an algebraic morphism $\tilde{\varphi} \colon \tilde{\xi} \to
\tilde{\eta}$ with $\tilde{\varphi}_X = \varphi$.
\end{definition}

Algebraic $\F$-vector bundles on $X$, together with algebraic morphisms,
form a category.

\begin{lemma}\label{lem-5-2}
Let $V$ be a real algebraic variety, $X \subseteq V$ some subset, and
$\varphi \colon \xi \to \eta$ a bijective algebraic morphism of
algebraic $\F$-vector bundles on $X$. Then $\varphi$ is an algebraic
isomorphism.
\end{lemma}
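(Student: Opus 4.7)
The plan is to exhibit $\varphi^{-1}$ as an algebraic morphism by writing down an explicit formula, and then recognize this formula as algebraic via Cramer's rule. First I would fix algebraic extensions $(Z_0,\tilde\xi)$ and $(Z_0,\tilde\eta)$ of $\xi$ and $\eta$, together with an algebraic morphism $\tilde\varphi\colon\tilde\xi\to\tilde\eta$ satisfying $\tilde\varphi_X=\varphi$; here $\tilde\xi\subseteq\varepsilon_{Z_0}^{n_1}(\F)$ and $\tilde\eta\subseteq\varepsilon_{Z_0}^{n_2}(\F)$. The subset $Z_1\subseteq Z_0$ on which $\tilde\varphi_z$ is a fiberwise $\F$-linear isomorphism is Zariski open (being cut out by the nonvanishing of appropriate minors in a local matrix representation) and contains $X$ because $\varphi$ is bijective. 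Replacing $Z_0$ by $Z_1$, I may assume $\tilde\varphi$ is fiberwise bijective throughout.

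Next, I would extend $\tilde\varphi$ to an algebraic morphism of trivial bundles by
\[
\bar\varphi\coloneqq \iota_{\tilde\eta}\circ\tilde\varphi\circ\rho_{\tilde\xi}\colon \varepsilon_{Z_1}^{n_1}(\F)\to\varepsilon_{Z_1}^{n_2}(\F),
\]
which is algebraic since orthogonal projections and inclusions are (Section~\ref{sec-4b}), with matrix representation $B\colon Z_1\to\Mat_{n_2,n_1}(\F)$ regular. The adjoint $\bar\varphi^*$ has matrix representation the conjugate transpose of $B$, still regular, so $\bar\varphi^*$ is algebraic. The key object is
\[
M\coloneqq \bar\varphi^*\bar\varphi+(\mathrm{id}-\rho_{\tilde\xi})\colon \varepsilon_{Z_1}^{n_1}(\F)\to\varepsilon_{Z_1}^{n_1}(\F),
\]
an algebraic endomorphism which preserves the decomposition $\tilde\xi\oplus\tilde\xi^\perp$ and acts fiberwise as $\tilde\varphi^*\tilde\varphi$ (positive definite on $Z_1$) on $\tilde\xi$ and as the identity on $\tilde\xi^\perp$. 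Thus $M_z$ is invertible for every $z\in Z_1$, and the real determinant of its matrix representation is a regular function on $Z_1$ that vanishes nowhere.

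Finally, Cramer's rule writes each entry of $M^{-1}$ as a polynomial in the entries of $M$ divided by $\det M$, and since $\det M$ vanishes nowhere on $Z_1$ these entries are regular functions by Lemma~\ref{lem-2-2}. Hence $M^{-1}$ is an algebraic endomorphism of $\varepsilon_{Z_1}^{n_1}(\F)$, and the formula
\[
\tilde\psi(w)\coloneqq M^{-1}\bar\varphi^*(w)\qquad\bigl(w\in E(\tilde\eta)\bigr)
\]
defines an algebraic morphism $\tilde\psi\colon\tilde\eta\to\tilde\xi$: a direct fiberwise check, using that $\bar\varphi^*(w)=\tilde\varphi^*(w)\in\tilde\xi$ for $w\in\tilde\eta$ (since orthogonal projections are self-adjoint) and that $M^{-1}$ restricts on $\tilde\xi$ to $(\tilde\varphi^*\tilde\varphi)^{-1}$, shows that $\tilde\psi$ lands in $\tilde\xi$ and satisfies $\tilde\psi\circ\tilde\varphi=\mathrm{id}_{\tilde\xi}$ and $\tilde\varphi\circ\tilde\psi=\mathrm{id}_{\tilde\eta}$ on $Z_1$. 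Restricting to $X$ produces the required algebraic inverse $\psi=\tilde\psi_X$ of $\varphi$. The main obstacle—ensuring that $M^{-1}$ is admissible as an algebraic morphism—is precisely what Lemma~\ref{lem-2-2} is designed to resolve, since the paper's notion of regular function allows denominators that vanish nowhere.
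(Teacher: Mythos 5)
Your proof is correct, but it takes a genuinely different route from the paper's. Both arguments begin the same way: shrink $Z_0$ to the Zariski open set where $\tilde\varphi$ is a fiberwise isomorphism, thereby reducing to the case where $X$ is Zariski locally closed. At that point the paper invokes Zariski-local triviality of algebraic vector bundles, observes that regularity of $\varphi^{-1}$ is a Zariski-local condition on $X$, and so reduces to the single case $\xi=\eta=\varepsilon_X^n(\F)$, where the inverse is $A(x)^{-1}$ with $A$ the regular matrix representation; Cramer's rule over $\R$ finishes. You instead avoid localization entirely and build a \emph{global} algebraic inverse: you pass to the adjoint $\bar\varphi^*$ and the auxiliary operator $M=\bar\varphi^*\bar\varphi+(\mathrm{id}-\rho_{\tilde\xi})$ on the ambient trivial bundle, check that $M$ is block-diagonal (positive definite on $\tilde\xi$, identity on $\tilde\xi^\perp$), hence invertible with a nowhere-vanishing real determinant, and then apply Cramer's rule to $M$ to realize $\tilde\psi=M^{-1}\bar\varphi^*$ as the required algebraic inverse on $\tilde\eta$. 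This is the Moore--Penrose formula $(\tilde\varphi^*\tilde\varphi)^{-1}\tilde\varphi^*$ made algebraic by stabilizing with the identity on $\tilde\xi^\perp$. The trade-off: the paper's argument is shorter but relies on local triviality and the Zariski-local nature of regularity (both standard but left implicit); yours is longer but fully explicit, requires no patching, and makes it transparent that the inverse is given by a single ratio of regular functions with a denominator ($\det M$) that is regular and nowhere zero, which is exactly what Lemma~\ref{lem-2-2} is designed to handle. Your verifications (that $\bar\varphi^*$ carries $\tilde\eta$ into $\tilde\xi$, that $M$ preserves the orthogonal decomposition, that $\tilde\psi\circ\tilde\varphi=\mathrm{id}_{\tilde\xi}$ and hence also $\tilde\varphi\circ\tilde\psi=\mathrm{id}_{\tilde\eta}$ by fiberwise bijectivity) are all sound, including the observation that entrywise $\F$-conjugation is $\R$-linear so $\bar\varphi^*$ has a regular matrix representation for $\F=\CB,\HB$ as well.
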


\begin{proof}
Let $(Z_0, \tilde{\xi})$, $(Z_0, \tilde{\eta})$, and $\tilde{\varphi}
\colon \tilde{\xi} \to \tilde{\eta}$ be as in Definition~\ref{def-5-1}.
Shrinking $Z_0$ if necessary, we may assume that the algebraic morphism
$\tilde{\varphi}$ is bijective. Thus the proof is reduced to the case
where $X$~is a Zariski locally closed subvariety of $V$.

Our goal is to prove that the inverse map $\varphi^{-1} \colon E(\eta)
\to E(\xi)$ is a regular map. The problem is local for the Zariski
topology on $X$, so it suffices to consider $\xi = \eta =
\varepsilon_X^n(\F)$. Let ${A = A_{\varphi} \colon X \to \Mat_n(\F)}$ be
the matrix representation of $\varphi$. Then $A$ is a regular map and
\begin{equation*}
\varphi^{-1}(x,w) = (x, A(x)^{-1}(w)) \ \textrm{for} \ (x,w) \in X
\times \F^n.
\end{equation*}
Consequently, $\varphi^{-1}$ is a regular map, as required.
\end{proof}

\begin{lemma}\label{lem-5-3}
Let $V$ be a real algebraic variety, $X \subseteq V$ some subset, and
$\xi$ an algebraic $\F$-vector subbundle of $\xi_X^n(\F)$. Then the
orthogonal complement $\xi^{\perp}$ of $\xi$ is also an algebraic
$\F$-vector subbundle of $\varepsilon_X^n(\F)$, and the orthogonal
projection $\rho_{\xi} \colon \varepsilon_X^n(\F) \to
\varepsilon_X^n(\F)$ is an algebraic morphism.
\end{lemma}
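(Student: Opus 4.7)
The plan is to reduce the claim to the corresponding statement for algebraic vector bundles on a real algebraic variety (recalled at the end of Subsection~\ref{sec-4b}), which is already available to us, by using that orthogonal complements and orthogonal projections are defined fiberwise and therefore commute with restriction to a subset.

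First, by Definition~\ref{def-5-1}, the bundle $\xi$ admits an algebraic extension $(Z_0,\tilde\xi)$, where $Z_0 \subseteq Z$ is a Zariski open neighborhood of $X$ in the Zariski closure $Z$ of $X$ in $V$, and $\tilde\xi$ is an algebraic $\F$-vector subbundle of $\varepsilon_{Z_0}^n(\F)$ with $\restr{\tilde\xi}{X}=\xi$. Since $Z_0$ is a real algebraic variety and $\tilde\xi \subseteq \varepsilon_{Z_0}^n(\F)$ is algebraic, the result recalled in Subsection~\ref{sec-4b} yields that $\tilde\xi^{\perp}$ is an algebraic $\F$-vector subbundle of $\varepsilon_{Z_0}^n(\F)$ and that the orthogonal projection $\rho_{\tilde\xi}\colon \varepsilon_{Z_0}^n(\F) \to \varepsilon_{Z_0}^n(\F)$ is an algebraic morphism.

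Next, I would observe that the orthogonal complement with respect to the standard inner product on $\F^n$ is computed fiber by fiber, so
\begin{equation*}
\restr{(\tilde\xi^{\perp})}{X}=(\restr{\tilde\xi}{X})^{\perp}=\xi^{\perp}.
\end{equation*}
Thus $(Z_0,\tilde\xi^{\perp})$ is an algebraic extension of $\xi^{\perp}$, which shows that $\xi^{\perp}$ is an algebraic $\F$-vector subbundle of $\varepsilon_X^n(\F)$. For the second assertion, the orthogonal projection onto a subspace depends only on the subspace itself, so $(\rho_{\tilde\xi})_X = \rho_{\xi}$. By the morphism part of Definition~\ref{def-5-1}, with the algebraic extensions $(Z_0,\varepsilon_{Z_0}^n(\F))$ of $\varepsilon_X^n(\F)$ on both sides and the algebraic morphism $\tilde\varphi \coloneqq \rho_{\tilde\xi}$, this shows that $\rho_{\xi}$ is an algebraic morphism.

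There is essentially no obstacle beyond confirming that restriction is compatible with the two constructions in question; once that is noted, the proof is a direct application of the $Z_0$-level statement. No shrinking of $Z_0$ is required, and the argument works uniformly in $\F\in\{\R,\CB,\HB\}$ since the standard Hermitian/quaternionic inner product is polynomial in the real coordinates.
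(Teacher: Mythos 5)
Your proof is correct and is essentially the paper's own argument made explicit: the paper states that the lemma is a standard fact when $X$ is a Zariski locally closed subvariety and that the general case follows immediately, and you have supplied precisely the intermediate step — passing to the algebraic extension $(Z_0,\tilde\xi)$, applying the known result on the Zariski locally closed subvariety $Z_0$, and noting that orthogonal complement and orthogonal projection are computed fiberwise and therefore commute with restriction to $X$.
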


\begin{proof}
This is a standard fact if $X$ is a Zariski locally closed subvariety of
$V$. The general case follows immediately.
\end{proof}

We denote by $\gamma(\F^n)$ the algebraic $\F$-vector subbundle of
$\varepsilon_{\G(\F^n)}^n(\F)$ whose restriction to $\G_r(\F^n)$ is
$\gamma_r(\F^n)$ for $0 \leq r \leq n$.

Given a topological space $X$ and a continuous map $f \colon X \to
\G(\F^n)$, we regard the pullback $f^*\gamma(\F^n)$ as a topological
$\F$-vector subbundle of $\varepsilon_X^n(\F)$; thus
\begin{equation*}
E(f^*\gamma(\F^n))_x = \{x\} \times f(x) \ \textrm{for all} \ x \in X.
\end{equation*}
Conversely, if $\xi$ is a topological $\F$-vector subbundle of
$\varepsilon_X^n(\F)$, then the map $f_{\xi}\colon X \to \G(\F^n)$,
defined by
\begin{equation*}
E(\xi)_x = \{x\} \times f_{\xi}(x) \ \textrm{for all} \ x \in X,
\end{equation*}
is continuous and $\xi = f^*_{\xi}\gamma(\F^n)$. We call $f_{\xi}$ the
\emph{classifying map for} $\xi$. Note that the classifying map for
$f^*\gamma(\F^n)$ is $f$.

\begin{lemma}\label{lem-5-4}
Let $V$ be a real algebraic variety, $X \subseteq V$ some subset, and
$\xi$ a topological $\F$-vector subbundle of $\varepsilon_X^n(\F)$. Then
the following conditions are equivalent:
\begin{conditions}
\item\label{lem-5-4-a} $\xi$ is an algebraic $\F$-vector subbundle of
$\varepsilon_X^n(\F)$.

\item\label{lem-5-4-b} The classifying map $f_{\xi} \colon X \to \G(\F)$
for $\xi$ is regular.
\end{conditions}
\end{lemma}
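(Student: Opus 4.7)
The plan is to prove both implications by reducing to the case where the base is a Zariski locally closed subvariety, via an algebraic extension on a Zariski open neighborhood $Z_0$ of $X$ in its Zariski closure $Z$, and then using the familiar correspondence between algebraic subbundles of $\varepsilon_{Z_0}^n(\F)$ and regular maps $Z_0 \to \G(\F^n)$.

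For the implication (\ref{lem-5-4-a}) $\Rightarrow$ (\ref{lem-5-4-b}), I would first take an algebraic extension $(Z_0,\tilde{\xi})$ of $\xi$ as in Definition~\ref{def-5-1} and show that the classifying map $f_{\tilde{\xi}}\colon Z_0 \to \G(\F^n)$ is regular as a morphism of real algebraic varieties. The rank of $\tilde{\xi}$ is locally constant on $Z_0$, so it suffices to work on each connected component where the rank is some fixed $r$ and to verify regularity Zariski-locally. At each point of $Z_0$ one can find a Zariski open neighborhood $U$ and regular sections $s_1,\ldots,s_r\colon U \to \varepsilon_U^n(\F)$ forming a frame of $\tilde{\xi}|_U$; writing $s_i(z)=(z,v_i(z))$, the map $z\mapsto [v_1(z),\ldots,v_r(z)]$ is a regular map $U \to \Mat_{n,r}^0(\F)$, and composing with the regular quotient map $\Mat_{n,r}^0(\F)\to \G_r(\F^n)$, $A \mapsto A(\F^r)$, from Subsection~\ref{sec-4a} recovers $f_{\tilde{\xi}}|_U$. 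Since $f_\xi = f_{\tilde{\xi}}|_X$, Definition~\ref{def-1-1} then gives (\ref{lem-5-4-b}).

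For the implication (\ref{lem-5-4-b}) $\Rightarrow$ (\ref{lem-5-4-a}), Definition~\ref{def-1-1} supplies a Zariski open neighborhood $Z_0 \subseteq Z$ of $X$ and a regular extension $\tilde{f}\colon Z_0 \to \G(\F^n)$ of $f_\xi$. I would then set $\tilde{\xi} \coloneqq \tilde{f}^*\gamma(\F^n)$, viewed as a topological subbundle of $\varepsilon_{Z_0}^n(\F)$. Because $\gamma(\F^n)$ is an algebraic subbundle of $\varepsilon_{\G(\F^n)}^n(\F)$ and the pullback by a regular map preserves the algebraic structure of subbundles of trivial bundles, $\tilde{\xi}$ is an algebraic $\F$-vector subbundle of $\varepsilon_{Z_0}^n(\F)$. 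Finally, $\restr{\tilde{\xi}}{X} = \xi$ since both subbundles have classifying map $f_\xi$, and a subbundle of $\varepsilon_X^n(\F)$ is uniquely determined by its classifying map via $E(\eta)_x = \{x\}\times f_\eta(x)$. Hence $(Z_0,\tilde{\xi})$ is an algebraic extension of $\xi$, proving~(\ref{lem-5-4-a}).

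The only non-formal point is the existence of local regular frames in the first direction; this is standard for algebraic subbundles of trivial bundles over Zariski locally closed subvarieties and is implicit in the characterizations cited after Definition~\ref{def-5-1}. All other steps are routine bookkeeping with Definition~\ref{def-1-1}, the functoriality of pullback, and the regularity of the tautological quotient $\Mat_{n,r}^0(\F)\to\G_r(\F^n)$ noted in Subsection~\ref{sec-4a}.
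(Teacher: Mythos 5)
Your proposal is correct and follows the same route as the paper: the paper's proof simply observes that the result is well-known when $X$ is a Zariski locally closed subvariety and that the general case then follows immediately from Definitions~\ref{def-1-1} and~\ref{def-5-1} via algebraic extensions on a Zariski open neighborhood $Z_0$ of $X$ in $Z$. You have merely spelled out both the reduction and the ``well-known'' locally closed case (local regular frames and the regular map $\Mat^0_{n,r}(\F)\to\G_r(\F^n)$ in one direction, pullback $\tilde f^{*}\gamma(\F^n)$ in the other), which is exactly what the paper leaves implicit.
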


\begin{proof}
This is well-known if $X$ is a Zariski locally closed subvariety of $V$.
The general case follows immediately.
\end{proof}

Next we introduce the central notion of this section.

\begin{definition}\label{def-5-5}
Let $V$ be a real algebraic variety, $X \subseteq V$ some subset, and
$\SC$ a stratification of~$V$.

An \emph{$\SC$-algebraic} (resp.\ a \emph{piecewise $\SC$-algebraic})
\emph{$\F$-vector bundle} $\xi$ on $X$ is a topological $\F$-vector
subbundle of $\varepsilon_X^n(\F)$, for some $n$, such that for every
stratum $S \in \SC$ the restriction $\restr{\xi}{X \cap S}$ is an
algebraic $\F$-vector subbundle of $\varepsilon_{X \cap S}^n(\F)$ (resp.\
for every stratum $S \in \SC$ and each connected component $\Sigma$ of
$X \cap S$ the restriction $\restr{\xi}{\Sigma}$ is an algebraic
$\F$-vector subbundle of $\varepsilon_{\Sigma}^n(\F)$). In that case,
$\xi$ is also said to be an \emph{$\SC$-algebraic} (resp.\ a
\emph{piecewise $\SC$-algebraic}) \emph{$\F$-vector subbundle} of
$\varepsilon_X^n(\F)$.

If $\xi$ and $\eta$ are $\SC$-algebraic (resp.\ piecewise
$\SC$-algebraic) $\F$-vector bundles on $X$, then an
\emph{$\SC$-algebraic} (resp.\ a \emph{piecewise $\SC$-algebraic})
\emph{morphism} $\varphi \colon \xi \to \eta$ is a topological morphism
such that for every stratum $S \in \SC$ the restriction $\varphi_{X \cap
S} \colon \restr{\xi}{X \cap S} \to \restr{\eta}{X \cap S}$ is an
algebraic morphism (resp.\ for every stratum $S \in \SC$ and each
connected component $\Sigma$ of $X \cap S$ the restriction
$\varphi_{\Sigma} \colon \restr{\xi}{\Sigma} \to \restr{\eta}{\Sigma}$
is an algebraic morphism).

A \emph{stratified-algebraic} (resp.\ a \emph{piecewise-algebraic})
$\F$-vector bundle on $X$ is a $\TC$-algebraic (resp.\ a piecewise
$\TC$-algebraic) $\F$-vector bundle on $X$ for some stratification $\TC$
of $V$.

If $\xi$ and $\eta$ are stratified-algebraic (resp.\ piecewise-algebraic)
$\F$-vector bundles on $X$, then a \emph{stratified-algebraic} (resp.\ a
\emph{piecewise-algebraic}) morphism $\varphi \colon \xi \to \eta$ is a
$\TC$-algebraic (resp.\ a piecewise $\TC$-algebraic) morphism for some
stratification $\TC$ of $V$ such that both $\xi$ and $\eta$ are
$\TC$-algebraic (resp.\ piecewise $\TC$-algebraic) $\F$-vector bundles on
$X$.
\end{definition}

It is clear that $\SC$-algebraic (resp.\ piecewise $\SC$-algebraic)
$\F$-vector bundles on $X$, together with $\SC$-algebraic (resp.\
piecewise $\SC$-algebraic) morphisms, form a category. Similarly,
stratified-algebraic (resp.\ piecewise-algebraic) $\F$-vector bundles on
$X$, together with stratified-algebraic (resp.\ piecewise-algebraic)
morphisms, form a category.

In a somewhat less general context, $\SC$-algebraic and
stratified-algebraic $\F$-vector bundles are thoroughly investigated in
\cite{bib54, bib56, bib58, bib60, bib63}.

\begin{proposition}\label{prop-5-6}
Let $V$ be a real algebraic variety, $X \subseteq V$ some subset, $\SC$
a stratification of~$V$, and $\xi$ a topological $\F$-vector subbundle
of $\varepsilon_X^n(\F)$ for some $n$. Then the following conditions are
equivalent:
\begin{conditions}
\item\label{prop-5-6-a} $\xi$ is an $\SC$-algebraic (resp.\ a piecewise
$\SC$-algebraic) $\F$-vector subbundle of $\varepsilon_X^n(\F)$.

\item\label{prop-5-6-b} The classifying map $f_{\xi} \colon X \to
\G(\F^n)$ for $\xi$ is $\SC$-regular (resp.\ piecewise $\SC$-regular).
\end{conditions}
\end{proposition}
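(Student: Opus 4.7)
The plan is to reduce Proposition~\ref{prop-5-6} to Lemma~\ref{lem-5-4} by working stratum by stratum. The key observation is that the classifying map commutes with restriction: if $\xi$ is a topological $\F$-vector subbundle of $\varepsilon_X^n(\F)$ and $Y \subseteq X$ is any subset, then the classifying map for $\restr{\xi}{Y}$ (regarded as a topological $\F$-vector subbundle of $\varepsilon_Y^n(\F)$) coincides with $\restr{f_\xi}{Y}$, since both fiberwise pick out the same $r$-dimensional subspace of $\F^n$. This lets one translate statements about subbundles of $\varepsilon_{Y}^n(\F)$ into statements about maps $Y \to \G(\F^n)$ for any subset $Y$ of $V$.

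First I would treat the $\SC$-algebraic case. Fix a stratum $S \in \SC$ and apply Lemma~\ref{lem-5-4} to the subset $X \cap S \subseteq V$: the restriction $\restr{\xi}{X \cap S}$ is an algebraic $\F$-vector subbundle of $\varepsilon_{X \cap S}^n(\F)$ if and only if its classifying map, namely $\restr{f_\xi}{X \cap S}$, is regular in the sense of Definition~\ref{def-1-1}. By definition $\xi$ is $\SC$-algebraic precisely when the former holds for every $S \in \SC$, and $f_\xi$ is $\SC$-regular precisely when the latter holds for every $S \in \SC$; taking the conjunction over strata gives (\ref{prop-5-6-a}) $\Leftrightarrow$ (\ref{prop-5-6-b}) in this case.

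For the piecewise $\SC$-algebraic case the argument is identical, except that one now applies Lemma~\ref{lem-5-4} to each connected component $\Sigma$ of $X \cap S$ rather than to $X \cap S$ itself. For such a $\Sigma \subseteq V$, Lemma~\ref{lem-5-4} gives that $\restr{\xi}{\Sigma}$ is an algebraic $\F$-vector subbundle of $\varepsilon_{\Sigma}^n(\F)$ if and only if $\restr{f_\xi}{\Sigma}$ is regular. By Definition~\ref{def-5-5} the former holds for all $\Sigma$ and all $S \in \SC$ precisely when $\xi$ is piecewise $\SC$-algebraic, while by Definition~\ref{def-1-2} the latter holds for all $\Sigma$ and all $S \in \SC$ precisely when $f_\xi$ is piecewise $\SC$-regular.

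There is essentially no obstacle here: the content is entirely concentrated in Lemma~\ref{lem-5-4}, and Proposition~\ref{prop-5-6} just repackages it through the obvious compatibility of the classifying map with restriction, combined with the defining quantifiers over strata (and connected components of their traces on $X$).
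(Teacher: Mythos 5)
Your proof is correct and is exactly what the paper's one-line proof ("This follows from Lemma~\ref{lem-5-4}.") is abbreviating: apply Lemma~\ref{lem-5-4} to each $X\cap S$ (resp.\ to each connected component $\Sigma$ of $X\cap S$), using the compatibility $f_{\restr{\xi}{Y}} = \restr{f_\xi}{Y}$, and compare the quantifiers in Definitions~\ref{def-1-2} and~\ref{def-5-5}. No gaps.
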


\begin{proof}
This follows from Lemma~\ref{lem-5-4}.
\end{proof}

As a direct consequence, we obtain the following.

\begin{corollary}\label{cor-5-7}
Let $V$ be a real algebraic variety, $X \subseteq V$ some subset, and
$\xi$ a topological $\F$-vector subbundle of $\varepsilon_X^n(\F)$ for
some $n$. Then the following conditions are equivalent:
\begin{conditions}
\item\label{cor-5-7-a} $\xi$ is a stratified-algebraic (resp.\ a
piecewise-algebraic) $\F$-vector subbundle of $\varepsilon_X^n(\F)$.

\item\label{cor-5-7-b} The classifying map $f_{\xi} \colon X \to \G(\F^n)$
for $\xi$ is stratified-regular (resp.\ piecewise-regular).
\end{conditions}
\end{corollary}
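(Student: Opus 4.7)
The plan is to deduce this directly from Proposition~\ref{prop-5-6} by noting that both the hypothesis and the conclusion of Corollary~\ref{cor-5-7} are exactly the existentially quantified versions (over all stratifications $\TC$ of $V$) of the corresponding statements in Proposition~\ref{prop-5-6}. No further geometric content is required beyond what Proposition~\ref{prop-5-6} already provides.

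Concretely, for the stratified-algebraic case, by Definition~\ref{def-5-5} condition (\ref{cor-5-7-a}) is equivalent to the existence of a stratification $\TC$ of $V$ such that $\xi$ is a $\TC$-algebraic $\F$-vector subbundle of $\varepsilon_X^n(\F)$. Applying Proposition~\ref{prop-5-6} with $\SC = \TC$, this is in turn equivalent to the existence of a stratification $\TC$ such that the classifying map $f_\xi \colon X \to \G(\F^n)$ is $\TC$-regular. By Definition~\ref{def-1-2}, the latter is exactly the statement that $f_\xi$ is stratified-regular, which is condition~(\ref{cor-5-7-b}). The same argument with ``$\TC$-algebraic'' replaced by ``piecewise $\TC$-algebraic'' and ``$\TC$-regular'' replaced by ``piecewise $\TC$-regular'' throughout establishes the piecewise version.

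The only point that needs a moment's thought is that Proposition~\ref{prop-5-6} yields the equivalence for each fixed stratification $\TC$, so the same $\TC$ witnesses condition~(\ref{cor-5-7-a}) if and only if it witnesses condition~(\ref{cor-5-7-b}); this lets one quantify over $\TC$ on both sides simultaneously. There is no substantive obstacle here -- the whole content of the corollary is already contained in Proposition~\ref{prop-5-6}, and the proof is essentially a bookkeeping step about the order of quantifiers.
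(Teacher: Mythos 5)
Your proof is correct and follows exactly the route the paper intends: the paper states Corollary~\ref{cor-5-7} as a direct consequence of Proposition~\ref{prop-5-6} without further argument, and your observation that the same stratification $\TC$ witnesses both sides is precisely the bookkeeping step that makes the deduction immediate.
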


Vector bundles introduced in Definition~\ref{def-5-5} can be compared
with topological vector bundles.

\begin{proposition}\label{prop-5-8}
Let $V$ be a real algebraic variety, $X \subseteq V$ a compact subset,
and $\SC$ a stratification of $V$. Let $\xi$ and $\eta$ be
$\SC$-algebraic (resp.\ piecewise $\SC$-algebraic) $\F$-vector bundles on
$X$ that are topologically isomorphic. Then $\xi$ and $\eta$ are also
isomorphic in the category of $\SC$-algebraic (resp.\ piecewise
$\SC$-algebraic) $\F$-vector bundles on $X$.
\end{proposition}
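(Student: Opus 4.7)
The plan is to reduce, by stabilization, to the case where $\xi$ and $\eta$ are $\SC$-algebraic (resp.\ piecewise $\SC$-algebraic) subbundles of a common ambient bundle $\varepsilon_X^n(\F)$, and then to construct the desired isomorphism by polynomially approximating the matrix of the given topological isomorphism and sandwiching the approximation between the orthogonal projections onto $\xi$ and $\eta$. If initially $\xi \subseteq \varepsilon_X^{n_1}(\F)$ and $\eta \subseteq \varepsilon_X^{n_2}(\F)$, the maps $v \mapsto (v,0)$ and $w \mapsto (0,w)$ identify $\xi$ and $\eta$ with subbundles of $\varepsilon_X^{n_1+n_2}(\F)$ via isomorphisms that already live in the relevant category, so this reduction is harmless.

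Next I would note, via Proposition~\ref{prop-5-6} and Lemma~\ref{lem-4-1}, that the orthogonal projections $\rho_\xi,\rho_\eta \colon \varepsilon_X^n(\F) \to \varepsilon_X^n(\F)$ are morphisms in the appropriate category: the classifying maps $f_\xi,f_\eta \colon X \to \G(\F^n)$ are $\SC$-regular (resp.\ piecewise $\SC$-regular), so the associated projection-valued maps into $\Mat_n(\F)$ are of the same type stratum by stratum (resp.\ component by component). Given a topological isomorphism $\varphi \colon \xi \to \eta$, form the endomorphism $\iota_\eta \circ \varphi \circ \rho_\xi$ of $\varepsilon_X^n(\F)$ whose matrix representation $A \colon X \to \Mat_n(\F)$ is continuous. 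Assuming $V \subseteq \R^m$, I would extend $A$ to $\R^m$ via Tietze and, using compactness of $X$, approximate it uniformly on $X$ by a polynomial map $B \colon \R^m \to \Mat_n(\F)$ via the Weierstrass theorem. The regular endomorphism $\widetilde B$ of $\varepsilon_X^n(\F)$ induced by $B|_X$ lies in either category, so $\Psi \coloneqq \rho_\eta \circ \widetilde B \circ \rho_\xi$ does as well; moreover $\Psi$ kills $\xi^\perp$ and has image in $\eta$, hence factors as $\iota_\eta \circ \psi \circ \rho_\xi$ for a unique morphism $\psi \colon \xi \to \eta$ in the appropriate category, and $\psi$ is uniformly close to $\varphi$ on $X$.

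To conclude, since fiberwise invertibility is an open condition and $X$ is compact, a sufficiently accurate $\psi$ is automatically bijective. Applying Lemma~\ref{lem-5-2} on each stratum (resp.\ on each connected component of $X \cap S$ for every $S \in \SC$) then provides an inverse in the same category, so $\psi$ is the required isomorphism. The main obstacle is not analytic but categorical bookkeeping: one must verify at each stage that the constructions (orthogonal projection, composition, polynomial approximation, restriction to $\xi$ and corestriction to $\eta$) preserve the $\SC$-algebraic (resp.\ piecewise $\SC$-algebraic) structure, the essential input being Lemma~\ref{lem-4-1} together with Proposition~\ref{prop-5-6}.
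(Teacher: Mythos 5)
Your proposal is correct and follows essentially the same route as the paper: extend the topological isomorphism to a morphism of the ambient trivial bundles, approximate its matrix representation by a regular map via Weierstrass (using compactness of $X$), post-compose with the orthogonal projection $\rho_\eta$ (which lies in the relevant category by Lemma~\ref{lem-5-3}/Proposition~\ref{prop-5-6}), and conclude with Lemma~\ref{lem-5-2}. The only differences are cosmetic: the paper works directly with a morphism $\varepsilon_X^k(\F)\to\varepsilon_X^l(\F)$ rather than first stabilizing to a common ambient $\varepsilon_X^{k+l}(\F)$, and it omits the pre-composition with $\rho_\xi$ since restriction to $\xi$ at the end already handles the source side.
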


\begin{proof}
We consider explicitly only the piecewise $\SC$-algebraic case, the
$\SC$-algebraic one being completely analogous.

Thus, $\xi$ (resp.\ $\eta$) is a piecewise $\SC$-algebraic $\F$-vector
subbundle of $\varepsilon_X^k(\F)$ (resp.\ $\varepsilon_X^l(\F)$) for
some positive integer $k$ (resp.\ $l$).
Since $\varepsilon_X^k(\F) = \xi \oplus \xi^{\perp}$ and
$\varepsilon_X^l(\F) = \eta \oplus \eta^{\perp}$, there exists a
topological morphism $\varphi \colon \varepsilon_X^k(\F) \to
\varepsilon_X^l(\F)$ which transforms $\xi$ onto $\eta$. Let $A =
A_{\varphi} \colon X \to \Mat_{l,k}(\F)$ be the matrix representation of
$\varphi$. By the Weierstrass approximation theorem, we can find a
regular map $B \colon X \to \Mat_{l,k}(\F)$ that is close to $A$. Then
$\psi \colon \varepsilon_X^k(\F) \to \varepsilon_X^l(\F)$, defined by
\begin{equation*}
\psi(x,v) = (x, B(x)(v)) \ \textrm{for} \ (x,v) \in X \times \F^k,
\end{equation*}
is an algebraic morphism.

In view of Lemma~\ref{lem-5-3}, the orthogonal projection $\rho_{\eta}
\colon \varepsilon_X^l(\F) \to \varepsilon_X^l(\F)$ onto $\eta$ is a
piecewise $\SC$-algebraic morphism. Hence $\rho_{\eta} \circ \psi \colon
\varepsilon_X^k(\F) \to \varepsilon_X^l(\F)$ is a piecewise
$\SC$-algebraic morphism which transforms $\xi$ onto $\eta$.
Consequently, the morphism $\sigma \colon \xi \to \eta$ determined by
$\rho_{\eta} \circ \psi$ is bijective and piecewise $\SC$-algebraic. It
follows from Lemma~\ref{lem-5-2} that $\sigma$ is a piecewise
$\SC$-algebraic isomorphism.
\end{proof}

Proposition~\ref{prop-5-8} implies immediately the following.

\begin{corollary}\label{cor-5-9}
Let $V$ be a real algebraic variety, and $X \subseteq V$ a compact
subset. Let $\xi$ and~$\eta$ be stratified-algebraic (resp.\
piecewise-algebraic) $\F$-vector bundles on $X$ that are topologically
isomorphic. Then $\xi$ and $\eta$ are also isomorphic in the category of
stratified-algebraic (resp.\ piecewise-algebraic) $\F$-vector bundles on
$X$.
\end{corollary}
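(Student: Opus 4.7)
The plan is to observe that Corollary~\ref{cor-5-9} reduces directly to Proposition~\ref{prop-5-8} once a common stratification for $\xi$ and $\eta$ is produced. By the definition of stratified-algebraic (resp.\ piecewise-algebraic) $\F$-vector bundle, there exist stratifications $\TC_1$ and $\TC_2$ of $V$ such that $\xi$ is $\TC_1$-algebraic (resp.\ piecewise $\TC_1$-algebraic) and $\eta$ is $\TC_2$-algebraic (resp.\ piecewise $\TC_2$-algebraic).

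First, I would form the common refinement
\begin{equation*}
\SC \coloneqq \{ S_1 \cap S_2 \colon S_1 \in \TC_1,\ S_2 \in \TC_2 \},
\end{equation*}
which is again a stratification of $V$ (discarding empty intersections if desired; empty strata are explicitly allowed in the definition). Each stratum of $\SC$ is a Zariski locally closed subvariety of a stratum of $\TC_1$ and of a stratum of $\TC_2$, so the restrictions of $\xi$ and $\eta$ to $X \cap (S_1 \cap S_2)$ are algebraic subbundles of $\varepsilon_{X \cap S_1 \cap S_2}^n(\F)$ (and similarly for the piecewise version, since connected components of $X \cap S_1 \cap S_2$ are contained in connected components of $X \cap S_1$ and of $X \cap S_2$). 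Hence both $\xi$ and $\eta$ are $\SC$-algebraic (resp.\ piecewise $\SC$-algebraic) $\F$-vector bundles on $X$.

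Next, since $\xi$ and $\eta$ are topologically isomorphic, I would apply Proposition~\ref{prop-5-8} with this common stratification $\SC$ to obtain an $\SC$-algebraic (resp.\ piecewise $\SC$-algebraic) isomorphism $\sigma \colon \xi \to \eta$. By Definition~\ref{def-5-5}, such a $\sigma$ is in particular a stratified-algebraic (resp.\ piecewise-algebraic) isomorphism, completing the proof.

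No real obstacle is expected: the only substantive content is the construction of a common refinement, and this is the standard way to pass from ``for some stratification'' statements to the framework of a single fixed stratification. Everything else is an invocation of Proposition~\ref{prop-5-8}.
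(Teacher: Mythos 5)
Your proof is correct and matches the paper's (unstated) intent: the paper simply declares that Corollary~\ref{cor-5-9} follows immediately from Proposition~\ref{prop-5-8}, and the step you spell out — passing to the common refinement $\SC = \{S_1 \cap S_2\}$ of the two given stratifications and verifying that restriction of an algebraic subbundle to a smaller locally closed stratum remains algebraic (and likewise componentwise in the piecewise case) — is precisely the tacit content of that ``immediately.''
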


As an application of Theorem~\ref{th-1-3}, we obtain the following
result.

\begin{theorem}\label{th-5-10}
Let $V$ be a real algebraic variety and let $X \subseteq V$ be a compact
subset. Then each topological $\F$-vector bundle on $X$ is topologically
isomorphic to a piecewise-algebraic $\F$-vector bundle on $X$, which is
uniquely determined up to piecewise-algebraic isomorphism.
\end{theorem}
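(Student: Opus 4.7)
The plan is to derive Theorem~\ref{th-5-10} from Theorem~\ref{th-1-3} by passing through classifying maps, and to get uniqueness for free from Corollary~\ref{cor-5-9}. Uniqueness is in fact the easy half: if $\xi_1$ and $\xi_2$ are two piecewise-algebraic $\F$-vector bundles on $X$ that both realize the topological type of a given bundle $\xi$, then $\xi_1$ and $\xi_2$ are topologically isomorphic to each other, and Corollary~\ref{cor-5-9} immediately upgrades this to a piecewise-algebraic isomorphism. So essentially the entire proof reduces to the existence statement.

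For existence, let $\xi$ be a topological $\F$-vector bundle on the compact set $X$. Since $X$ is compact, a standard argument produces a positive integer $n$ such that $\xi$ embeds as a topological $\F$-vector subbundle of $\varepsilon_X^n(\F)$. Let $f_\xi \colon X \to \G(\F^n)$ be the classifying map; it is continuous, and $f_\xi^\ast\gamma(\F^n) = \xi$ by construction. Apply Theorem~\ref{th-1-3} to obtain, for any prescribed neighborhood $\UC$ of $f_\xi$ in $\C(X,\G(\F^n))$, a piecewise-regular map $g \colon X \to \G(\F^n)$ with $g \in \UC$. Set $\eta \coloneqq g^\ast\gamma(\F^n)$, viewed as a topological $\F$-vector subbundle of $\varepsilon_X^n(\F)$. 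Since $g$ is piecewise-regular and is by construction the classifying map of $\eta$, Corollary~\ref{cor-5-7} tells us that $\eta$ is a piecewise-algebraic $\F$-vector bundle on $X$.

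It remains to check that, for $\UC$ chosen small enough, $\eta$ is topologically isomorphic to $\xi$. This is a standard fact from topological bundle theory: on a compact (hence paracompact) base, the map sending a continuous map $X \to \G(\F^n)$ to the isomorphism class of its pullback of $\gamma(\F^n)$ is locally constant with respect to the compact-open topology, because sufficiently close maps are linked by a small homotopy (for instance, straight-line homotopy in a tubular neighborhood of $\G(\F^n) \subseteq \R^N$ followed by radial projection), and homotopic maps yield isomorphic pullback bundles on a paracompact base. So choosing $\UC$ within such a locally constant neighborhood of $f_\xi$ guarantees $\eta \cong \xi$ topologically, completing the existence half.

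The main potential obstacle is the topological approximation-implies-isomorphism step just used; it is routine but relies on $X$ being compact so that the classifying space theory for numerable bundles applies. Everything else is direct invocation of results already proved in the excerpt: Theorem~\ref{th-1-3} for the approximation, Corollary~\ref{cor-5-7} for recognizing piecewise-algebraic bundles from their classifying maps, and Corollary~\ref{cor-5-9} for uniqueness.
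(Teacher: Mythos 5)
Your proof is correct and follows essentially the same route as the paper: classify $\xi$ by a continuous map into $\G(\F^n)$, approximate it by a piecewise-regular map via Theorem~\ref{th-1-3}, pull back the tautological bundle, recognize it as piecewise-algebraic via Corollary~\ref{cor-5-7}, and invoke Corollary~\ref{cor-5-9} for uniqueness. The only difference is cosmetic: you spell out the standard ``sufficiently close maps are homotopic, hence have isomorphic pullbacks'' step, which the paper compresses into the single claim that $f$ is homotopic to a piecewise-regular map.
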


\begin{proof}
Let $\xi$ be a topological $\F$-vector bundle on $X$. Since $X$ is
compact, one can find a positive integer $n$ and a continuous map $f
\colon X \to \G(\F^n)$ such that $\xi$ is topologically isomorphic to
the pullback $f^*\gamma(\F^n)$, cf. \cite[Chapter~3,
Proposition~5.8]{bib36}. According to Theorem~\ref{th-1-3}, $f$ is
homotopic to a piecewise-regular map $g \colon X \to \G(\F^n)$, hence
$\xi$ is topologically isomorphic to the pullback $\eta \coloneqq
g^*\gamma(\F^n)$, cf. \cite[Chapter~3, Theorem~4.7]{bib36}. By
Corollary~\ref{cor-5-7}, $\eta$ is a piecewise-algebraic $\F$-vector
bundle on $X$. The proof is complete in view of Corollary~\ref{cor-5-9}.
\end{proof}

In a similar way, we can derive from Theorem~\ref{th-1-4} the next
result.

\begin{theorem}\label{th-5-11}
Let $V$ be a real algebraic variety and let $X \subseteq V$ be a compact
locally contractible subset. Then there exists a stratification $\SC$ of
$V$ such that each topological $\F$-vector bundle on~$X$ is
topologically isomorphic to a piecewise $\SC$-algebraic $\F$-vector
bundle on $X$, which is uniquely determined up to piecewise
$\SC$-algebraic isomorphism.
\end{theorem}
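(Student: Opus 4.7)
The plan is to parallel the proof of Theorem~\ref{th-5-10} step by step, replacing Theorem~\ref{th-1-3} by its stronger counterpart Theorem~\ref{th-1-4} and replacing Corollaries~\ref{cor-5-7} and~\ref{cor-5-9} by their piecewise $\SC$-algebraic analogues Proposition~\ref{prop-5-6} and Proposition~\ref{prop-5-8}. The single key feature that makes this upgrade possible is built into the statement of Theorem~\ref{th-1-4}: the stratification $\SC$ it produces depends only on $V$ and $X$, not on the target Grassmannian, and can therefore be chosen once and for all, uniformly in $n$.

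First, I invoke Theorem~\ref{th-1-4} on the compact locally contractible subset $X \subseteq V$ to obtain a stratification $\SC$ of $V$ with the property that, for every positive integer $n$, every continuous map from $X$ into $\G(\F^n)$ can be approximated by piecewise $\SC$-regular maps. This $\SC$ will be the stratification whose existence is asserted in Theorem~\ref{th-5-11}.

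Next, given an arbitrary topological $\F$-vector bundle $\xi$ on $X$, I use compactness of $X$ together with the standard classification of topological vector bundles (\cite[Chapter~3, Proposition~5.8]{bib36}) to produce a positive integer $n$ and a continuous classifying map $f \colon X \to \G(\F^n)$ with $\xi$ topologically isomorphic to $f^*\gamma(\F^n)$. By the defining property of $\SC$, there is a piecewise $\SC$-regular map $g \colon X \to \G(\F^n)$ arbitrarily close to $f$; any sufficiently close approximation is homotopic to $f$ by \cite[Chapter~3, Theorem~4.7]{bib36}, hence $\eta \coloneqq g^*\gamma(\F^n)$ is topologically isomorphic to $\xi$. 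Proposition~\ref{prop-5-6} then ensures that $\eta$ is a piecewise $\SC$-algebraic $\F$-vector subbundle of $\varepsilon_X^n(\F)$, which gives existence. Uniqueness up to piecewise $\SC$-algebraic isomorphism is exactly the content of Proposition~\ref{prop-5-8} applied to two topologically isomorphic piecewise $\SC$-algebraic bundles.

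There is no real analytic obstacle in this argument; all of the nontrivial work is already encapsulated in Theorem~\ref{th-1-4} and in the structural Propositions~\ref{prop-5-6} and~\ref{prop-5-8}. The only point that requires a moment's thought is that the single stratification $\SC$ delivered by Theorem~\ref{th-1-4} simultaneously serves every bundle $\xi$, regardless of which Grassmannian $\G(\F^n)$ classifies it, but this is precisely what the quantifier structure of Theorem~\ref{th-1-4} guarantees.
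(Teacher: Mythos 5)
Your proposal is correct and follows essentially the same route as the paper's own proof: invoke Theorem~\ref{th-1-4} once to fix the stratification $\SC$, classify a given bundle by a map into $\G(\F^n)$, replace it by a homotopic piecewise $\SC$-regular map, and then apply Propositions~\ref{prop-5-6} and~\ref{prop-5-8}. No discrepancies.
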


\begin{proof}
According to Theorem~\ref{th-1-4}, there exists a stratification $\SC$
of $V$ such that, for each positive integer $n$, every continuous map
from $X$ into $\G(\F^n)$ is homotopic to a piecewise $\SC$-regular map.

Let $\xi$ be a topological $\F$-vector bundle on $X$. One can find a
positive integer $n$ and a continuous map $f \colon X \to \G(\F^n)$ such
that $\xi$ is topologically isomorphic to the pullback
$f^*\gamma(\F^n)$. Hence $\xi$ is topologically isomorphic to the
pullback $\eta \coloneqq g^*\gamma(\F^n)$, where $g \colon X \to
\G(\F^n)$ is a piecewise $\SC$-regular map homotopic to $f$. By
Proposition~\ref{prop-5-6}, $\eta$ is a piecewise $\SC$-algebraic
$\F$-vector bundle on~$X$. The proof is complete in view of
Proposition~\ref{prop-5-8}.
\end{proof}

In Theorems~\ref{th-5-10} and~\ref{th-5-11}, piecewise-algebraic and
piecewise $\SC$-algebraic cannot be replaced by stratified-algebraic and
$\SC$-algebraic, respectively.

\begin{example}\label{ex-5-12}
Fix an integer $m \geq d(\F)+1$, and let $V$ and $h \colon V \to
\G_1(\F^2) = \SB^{d(\F)}$ be as in Example~\ref{ex-1-7}. We claim that
the topological $\F$-vector bundle $\xi \coloneqq h^*\gamma_1(\F^2)$ on
$V$ is not topologically isomorphic to any stratified-algebraic
$\F$-vector bundle. Indeed, write $\xi_{\R}$ and $\gamma_1(\F^2)_{\R}$
to indicate that $\xi$ and $\gamma_1(\F^2)$ are regarded as topological
$\R$-vector bundles. Since $w_{d(\F)}(\gamma_1(\F^2)_{\R}) = s_{d(\F)}$,
we get
\begin{equation*}
w_{d(\F)}(\xi_{\R}) = h^*(s_{d(\F)}) \notin \Halg^{d(\F)}(V; \Z/2),
\end{equation*}
where $w_q(-)$ stands for the $q$th Stiefel--Whitney class. The claim
follows in view of \cite[Propositions~7.3 and~7.7]{bib58}.
\end{example}

\section{Piecewise-regular maps into spheres (approximation)}\label{sec-6}

For maps with values in $\SB^n$, we have the following counterpart of
Proposition~\ref{prop-4-3}.

\begin{proposition}\label{prop-6-1}
Let $K$ be a finite simplicial complex in $\R^m$ and let $f \colon |K|
\to \SB^n$ be a continuous map. Assume that $f(\Delta) \neq \SB^n$ for
every simplex $\Delta \in K$. Then, for each open neighborhood $\UC
\subseteq \C(|K|, \SB^n)$ of $f$, there exists a continuous map $\varphi
\colon |K| \to \SB^n$ such that $\varphi \in \UC$ and the restriction
$\restr{\varphi}{\Delta} \colon \Delta \to \SB^n$ is a regular map for
every simplex $\Delta \in K$.
\end{proposition}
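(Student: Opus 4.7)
The plan is to follow the inductive strategy of the proof of Proposition~\ref{prop-4-3} on $d = \dim K$, the only new ingredient being a sphere-valued analogue of Lemma~\ref{lem-4-2}. The hypothesis $f(\Delta) \neq \SB^n$ is exactly what is needed in order to leave a ``hole'' in $\SB^n$ through which one can project stereographically, thereby reducing each simplex-level problem to the Euclidean approximation result of Lemma~\ref{lem-3-7}.

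First I would establish the following sphere analogue of Lemma~\ref{lem-4-2}: if $\Delta \subseteq \R^m$ is a $d$-simplex and $f \colon \Delta \to \SB^n$ is a continuous map with $f(\Delta) \neq \SB^n$ whose restriction to every $(d-1)$-simplex $\Gamma \in \dot{\Delta}$ is regular, then every neighborhood $\UC$ of $f$ in $\C(\Delta, \SB^n)$ contains a regular map $g$ with $\restr{g}{|\dot{\Delta}|} = \restr{f}{|\dot{\Delta}|}$. To prove this, pick $p \in \SB^n \setminus f(\Delta)$ and let $\sigma_p \colon \SB^n \setminus \{p\} \to \R^n$ be stereographic projection from $p$; this is a biregular isomorphism. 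The map $\sigma_p \circ f \colon \Delta \to \R^n$ is continuous, its restriction to each $(d-1)$-face is regular, so applying Lemma~\ref{lem-3-7} component-wise yields a regular map $h \colon \Delta \to \R^n$ arbitrarily close to $\sigma_p \circ f$ with $\restr{h}{|\dot{\Delta}|} = \restr{(\sigma_p \circ f)}{|\dot{\Delta}|}$. The composite $g \coloneqq \sigma_p^{-1} \circ h \colon \Delta \to \SB^n$ is then regular, coincides with $f$ on $|\dot{\Delta}|$, and belongs to $\UC$ provided $h$ is taken close enough to $\sigma_p \circ f$.

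For the inductive step, given $f$ and $\UC$, apply the induction hypothesis to $\restr{f}{|K^{(d-1)}|}$ (the assumption $f(\Gamma) \neq \SB^n$ is automatically inherited for every $\Gamma \in K^{(d-1)}$) to obtain $\psi \colon |K^{(d-1)}| \to \SB^n$ that is regular on each simplex of $K^{(d-1)}$ and arbitrarily close to $\restr{f}{|K^{(d-1)}|}$. Since $\SB^n$ is a Euclidean neighborhood retract, $\psi$ admits a continuous extension $\tilde{\psi} \colon |K| \to \SB^n$ close to $f$ (for example, glue $\psi$ and $f$ via a suitable collar of $|K^{(d-1)}|$ in $|K|$, or use Tietze into $\R^{n+1}$ followed by the nearest-point retraction onto $\SB^n$). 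Because each $f(\Delta)$ is compact and misses an open ball in $\SB^n$, if $\tilde{\psi}$ is chosen sufficiently close to $f$ then $\tilde{\psi}(\Delta) \neq \SB^n$ for every $d$-simplex $\Delta \in K$. On each such $\Delta$, apply the lemma above to $\restr{\tilde{\psi}}{\Delta}$ to produce a regular map $g_{\Delta} \colon \Delta \to \SB^n$ close to $\restr{\tilde{\psi}}{\Delta}$ with $\restr{g_{\Delta}}{|\dot{\Delta}|} = \restr{\tilde{\psi}}{|\dot{\Delta}|} = \restr{\psi}{|\dot{\Delta}|}$. Defining $\varphi$ by $\restr{\varphi}{|K^{(d-1)}|} = \psi$ and $\restr{\varphi}{\Delta} = g_{\Delta}$ for every $d$-simplex $\Delta \in K$, one obtains a continuous map $\varphi \colon |K| \to \SB^n$ that lies in $\UC$ (if all approximations are tight enough) and is regular on every simplex of $K$.

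The main obstacle is the simplex-level statement, that is, the sphere analogue of Lemma~\ref{lem-4-2}: without the hypothesis $f(\Delta) \neq \SB^n$ the stereographic trick is unavailable, and indeed the examples in Section~\ref{sec-1} show that the conclusion of Proposition~\ref{prop-6-1} genuinely can fail if one drops that assumption. A secondary technical point is to arrange the extension $\tilde{\psi}$ simultaneously close to $f$ on each $d$-simplex (to preserve $\tilde{\psi}(\Delta) \neq \SB^n$) and exactly equal to $\psi$ on $|K^{(d-1)}|$, but this is standard once $\psi$ is taken close enough to $\restr{f}{|K^{(d-1)}|}$.
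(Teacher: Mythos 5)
Your proposal is correct and follows essentially the same route as the paper: induction on $\dim K$, extending the skeleton map, then using the biregular isomorphism $\SB^n \setminus \{p\} \cong \R^n$ to reduce the simplex-level approximation to Lemma~\ref{lem-3-7}. The only cosmetic difference is that you isolate the sphere analogue of Lemma~\ref{lem-4-2} as a standalone statement, whereas the paper carries out the same stereographic-projection argument inline inside the inductive step.
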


\begin{proof}
We use induction on $d = \dim K$. The case $d=0$ is obvious. Suppose now
that $d \geq 1$. By the induction hypothesis, there exists a continuous
map $\psi \colon |K^{(d-1)}| \to \SB^n$, arbitrarily close to
$\restr{f}{|K^{(d-1)}|}$, such that the restriction $\restr{\psi}{\Gamma}
\colon \Gamma \to \SB^n$ is a regular map for every simplex $\Gamma \in
K^{(d-1)}$. It readily follows that $\psi$ can be extended to a
continuous map $\tilde{\psi} \colon |K| \to \SB^n$ that belongs to $\UC$
and satisfies $\tilde{\psi}(\Delta) \neq \SB^n$ for every simplex
$\Delta \in K$. Since $\SB^n$ with one point removed is biregularly
isomorphic to $\R^n$, it follows from Lemma~\ref{lem-3-7} that for every
$d$-simplex $\Delta \in K$, there exists a regular map $g_{\Delta}
\colon \Delta \to \SB^n$ close to $\restr{\tilde{\psi}}{\Delta}$ and
such that $\restr{g_{\Delta}}{|\dot{\Delta}|} =
\restr{\tilde{\psi}}{|\dot{\Delta}|}$. Then the map $\varphi \colon |K|
\to \SB^n$, defined by $\restr{\varphi}{|K^{(d-1)}|} = \psi$ and
$\restr{\varphi}{\Delta} = g_{\Delta}$ for every $d$-simplex
$\Delta \in K$, has all the required properties.
\end{proof}

In what follows we will make use of Lemma~\ref{lem-4-4}.

\begin{proof}[Proof of Theorem~\ref{th-1-5}]
Let $n$ be a positive integer and let $f \colon X \to \SB^n \subseteq
\R^{n+1}$ be a continuous map. We may assume that $V \subseteq \R^m$ is
a Zariski closed subvariety, hence $X \subseteq \R^m$. By Tietze's
extension theorem, there exists a continuous map $F \colon \R^m \to
\R^{n+1}$ with $\restr{F}{X} =f$. Denoting by $\rho \colon \R^{n+1}
\setminus \{0\} \to \SB^n$ the radial projection, we see that $U
\coloneqq F^{-1}(\R^{n+1} \setminus \{0\}) \subseteq \R^m$ is a
neighborhood of $X$, and $\tilde f \colon U \to \SB^n$, given by $\tilde
f(x) = \rho(F(x))$ for $x \in U$, is a continuous extension of $f$.

We can find a finite simplicial complex $K$ in $\R^m$ with $X \subseteq
|K| \subseteq U$. By replacing $K$ with a suitable iterated barycentric
subdivision of $K$, we get $\tilde f(\Delta) \neq \SB^n$ for every
simplex $\Delta \in K$. In view of Proposition~\ref{prop-6-1}, we can
find a continuous map $\varphi \colon |K| \to \SB^n$, arbitrarily close
to $\restr{\tilde f}{|K|}$, such that the restriction $\restr{\varphi}{\Delta}
\colon \Delta \to \SB^n$ is a regular map for every simplex $\Delta \in
K$. According to Lemma~\ref{lem-4-4}, the restriction $g \coloneqq
\restr{\varphi}{X} \colon X \to \SB^n$ is a piecewise $\SC_K(V)$-regular
map. The proof is complete since $g$ is close to $f$.
\end{proof}

It is not clear whether there is a counterpart of Theorem~\ref{th-1-4}
for maps into spheres.

\begin{problem}\label{prob-6-2}
Let $V$ be a real algebraic variety, $X \subseteq V$ a compact locally
contractible subset, and $n$ a positive integer. Does there exist a
stratification $\SC$ of $V$ such that every continuous map from~$X$ into
$\SB^n$ can be approximated by piecewise $\SC$-regular maps?
\end{problem}

In view of Theorem~\ref{th-1-4}, the answer is affirmative if $n=1,2$ or
$4$ since $\SB^{d(\F)} = \G_1(\F^2)$.

\section{Piecewise-regular maps into spheres (homotopy)}\label{sec-7}

The following lemma will be used in the proof of Theorem~\ref{th-1-6}.

\begin{lemma}\label{lem-7-1}
Let $V$ be a compact nonsingular real algebraic variety, $Z \subseteq V$
a nonsingular Zariski closed subvariety with $\codim_V Z \geq 1$, and
$K$ the union of some connected components of $Z$. Then there exists a
closed tubular neighborhood $T \subseteq V$ of $K$ such that $T \cap (Z
\setminus K) = \varnothing$ and the boundary $\partial T$ of $T$ is a
nonsingular Zariski closed subvariety of $V$. Furthermore, for each
nonnegative integer~$l$, there exists a $\C^l$ function $\alpha \colon V
\to \R$ with the following properties:
\begin{iconditions}
\item\label{lem-7-1-i} $Z(\alpha) = K$;
\item\label{lem-7-1-ii} the restrictions $\restr{\alpha}{T}$ and
$\restr{\alpha}{V \setminus \Int T}$ are regular functions.
\end{iconditions}
\end{lemma}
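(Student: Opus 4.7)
The plan is to take $T$ to be a sublevel set of a non-negative regular function $\rho \colon V \to \R$ with $Z(\rho) = K$, and then to take $\alpha \coloneqq \rho$ itself. Under the hypotheses, $K$ is a nonsingular Zariski closed subvariety of $V$ (this is implicit in condition~(i), which forces $K$ to be the zero set of a regular function). Let $g_1,\ldots,g_r \in \RC(V)$ be regular functions generating the ideal $I_V(K)$, and put
\[
\rho \coloneqq g_1^2 + \cdots + g_r^2.
\]
Then $\rho$ is a non-negative regular function on $V$ with $Z(\rho) = K$.

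For the choice of $c$ defining $T$, I first observe that $\rho$ attains a positive minimum $m$ on the compact set $Z \setminus K$, so any $c \in (0,m)$ ensures $\rho^{-1}([0,c]) \cap (Z \setminus K) = \varnothing$. Next, by Sard's theorem applied to the smooth function $\rho$, almost every positive value is a regular value, so I may additionally require $c$ to be such a regular value; then $\rho^{-1}(c) = Z(\rho - c)$ is a nonsingular Zariski closed hypersurface of $V$. Finally, for the tubular neighborhood property, the key point is that at each $x \in K$ the differentials $dg_1(x),\ldots,dg_r(x)$ span the conormal space of $K$ in $V$ at $x$ (because $K$ is nonsingular and the $g_i$ generate $I_V(K)$), so $\rho$ is a Morse--Bott function in a neighborhood of $K$ with critical manifold $K$ and positive definite normal Hessian. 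By the Morse--Bott lemma, for all sufficiently small $c > 0$ the sublevel set $\rho^{-1}([0,c])$ is diffeomorphic to the closed disk bundle of the normal bundle of $K$ in $V$, hence is a closed tubular neighborhood of $K$. I choose $c$ satisfying all three requirements simultaneously and set $T \coloneqq \rho^{-1}([0,c])$.

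Finally, I take $\alpha \coloneqq \rho$. Since $\rho$ is regular on all of $V$, it is of class $\C^\infty$ and in particular $\C^l$ for every $l \geq 0$; it satisfies $Z(\alpha) = K$ by construction, verifying~(i); and the restrictions $\restr{\alpha}{T}$ and $\restr{\alpha}{V \setminus \Int T}$ are regular in the sense of Definition~\ref{def-1-1}, being restrictions of a regular function defined on the ambient variety $V$, which is~(ii). The main technical obstacle is the tubular neighborhood clause: one must arrange $c$ simultaneously to be a regular value of $\rho$, less than $m$, and small enough for the Morse--Bott description of $\rho^{-1}([0,c])$ to apply, which is possible because each constraint excludes only a negligible set of values near~$0$.
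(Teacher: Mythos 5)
Your proposal has a fatal gap at the very first step: you assert that ``$K$ is a nonsingular Zariski closed subvariety of $V$,'' but this is not given and is false in general. The hypothesis only says $K$ is a union of (Euclidean) connected components of $Z$, and such a union need not be an algebraic subset. For instance, take $V = \PB^2(\R)$ and $Z$ an irreducible nonsingular cubic with two Euclidean connected components; if $K$ is one of those components, its Zariski closure is all of $Z$, so $K$ is not Zariski closed. Your parenthetical justification is also circular: condition~(i) is part of the \emph{conclusion}, not the hypothesis, and it only requires $\alpha$ to be $\C^l$, not regular. In fact, the whole architecture of the lemma --- asking for a $\C^l$ function whose restrictions to $T$ and to $V \setminus \Int T$ are \emph{separately} regular --- exists precisely because $K$ need not be Zariski closed, so no single globally regular $\alpha$ with $Z(\alpha)=K$ can exist. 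Once you replace $K$ by $Z$ in your construction of $\rho = g_1^2 + \cdots + g_r^2$ (which is what generating $I_V(K)$ actually gives, since $V(I_V(K))$ is the Zariski closure of $K$, not $K$), you get $Z(\rho) \supseteq Z \supsetneq K$, breaking~(i).

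The paper's proof is built around this obstruction. It first takes a $\C^\infty$ closed tubular neighborhood $T$ of $K$ disjoint from $Z \setminus K$ and then, using that $[\partial T] = 0$ in $H_*(V;\Z/2)$ together with the algebraic approximation theorem \cite[Theorem~12.4.11]{bib6}, isotopes $T$ (fixing $Z$ pointwise) so that $\partial T$ becomes a nonsingular Zariski closed hypersurface. It then chooses regular $f,g$ with $Z(f)=Z$, $Z(g)=\partial T$ and defines $\alpha$ piecewise: $\alpha = f^2$ on $T$ and $\alpha = f^2 + g^{2l}$ on $V\setminus T$. This is $\C^l$ across $\partial T$ because $g^{2l}$ vanishes there to high order, has $Z(\alpha) = Z\cap T = K$, and each restriction in~(ii) is the restriction of a regular function. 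Your sublevel-set idea is the right first instinct for the easy case $K = Z$, but the lemma is genuinely about the harder case, and that requires the piecewise construction and the approximation theorem for making $\partial T$ algebraic.
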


\begin{proof}
Let $T \subseteq V$ be a closed tubular neighborhood of $K$ with $T
\cap (Z \setminus K) = \varnothing$. Note that the homology class in
$H_*(V; \Z/2)$ represented by the $\C^{\infty}$ hypersurface $\partial T
\subseteq V$ is equal to $0$. Hence there exists a $\C^{\infty}$
diffeomorphism $\sigma \colon V \to V$, arbitrarily close in the
$\C^{\infty}$ topology to the identity map of $V$, such that
$\sigma(\partial T)$ is a nonsingular Zariski closed subvariety of $V$
and $\sigma(x) = x$ for all~$x \in Z$, cf. \cite[Theorem~12.4.11]{bib6}.
Replacing $T$ by $\sigma(T)$, we may assume that $\partial T$ is a
nonsingular Zariski closed subvariety of $V$.

Let $f$ and $g$ be real-valued regular functions on $V$ with $Z(f) = Z$
and $Z(g) = \partial T$. If $l$ is a nonnegative integer, then
\begin{equation*}
\alpha(x)=
\begin{cases}
f(x)^2 & \textrm{for} \ x \in T\\
f(x)^2 + g(x)^{2l} & \textrm{for} \ x \in V \setminus T
\end{cases}
\end{equation*}
is a $\C^l$ function that satisfies (\ref{lem-7-1-i}) and
(\ref{lem-7-1-ii}).
\end{proof}

Our proof of Theorem~\ref{th-1-6} depends on the Pontryagin--Thom
construction. Unless explicitly stated otherwise, all $\C^{\infty}$
manifolds will be without boundary. Submanifolds will be closed subsets
of the ambient manifold. The unit $n$-sphere $\SB^n$ will be oriented as
the boundary of the unit $(n+1)$-disc. For any compact $\C^{\infty}$
manifold $X$ there is a canonical one-to-one correspondence
\begin{equation*}
\pi^n(X) \to F^n(X),
\end{equation*}
where $\pi^n(X)$ is the set of all homotopy classes of continuous maps
from $X$ into $\SB^n$, and $F^n(X)$ is the set of framed cobordism
classes of framed submanifolds of $X$ of codimension $n$, cf.
\cite{bib43, bib70} for details. Given a continuous map $f \colon X \to
\SB^n$, we denote by $\PT (f)$ the element of $F^n(X)$ corresponding to
the homotopy class of $f$.

It is convenient to introduce some notation related to this
construction. A framed submanifold of $X$ of codimension $n$ is a pair
$(M,F)$, where $M \subseteq X$ is a $\C^{\infty}$ codimension $n$
submanifold, and $F = (v_1, \ldots, v_n)$ is a $\C^{\infty}$ framing of
the normal bundle to $M$ in $X$. The normal space to $M$ in~$X$ at $x
\in M$ is the quotient $N_xM \coloneqq T_xX /T_xM$ of the tangent
spaces; thus $(v_1(x), \ldots, v_n(x))$ is a basis for $N_xM$.

Given a continuous map $f \colon X \to \SB^n$ and a point $y \in \SB^n$,
suppose that for some open neighborhood $U \subseteq \SB^n$ of $y$ the
restriction map $\restr{f}{f^{-1}(U)} \colon f^{-1}(U) \to U$ is of
class $\C^{\infty}$ and transverse to $y$. Choose a positively oriented
basis $B = (w_1, \ldots, w_n)$ for $T_y\SB^n$. Then $\PT(f) \in F^n(X)$
is represented by the framed submanifold $(f^{-1}(y), F(f,B))$, where
$F(f,B) = (v_1, \ldots, v_n)$ and $(v_1(x), \ldots, v_n(x))$ is
transformed onto $(w_1, \ldots, w_n)$ by the isomorphism $N_xM \to
T_y\SB^n$ induced by the derivative $d_x\varphi \colon T_xX \to
T_y\SB^n$ for every $x \in f^{-1}(y)$.

Let $\varphi \colon X \to \R^n$ be a $\C^{\infty}$ map transverse to $0
\in \R^n$ and let $M$ be the union of some connected components of
$\varphi^{-1}(0)$. Then we obtain a framed submanifold $(M, F(\varphi))$
of $X$, where $F(\varphi) = (v_1, \ldots, v_n)$ and $(v_1(x), \ldots,
v_n(x))$ is transformed onto the canonical basis for $\R^n$ by the
isomorphism $N_xM \to \R^n$ induced by the derivative $d_x\varphi \colon
T_xX \to T_0\R^n = \R^n$ for every $x \in M$.

\begin{proof}[Proof of Theorem~\ref{th-1-6}]
The case $\dim V < n$ is obvious since then $f$ is null homotopic.
Suppose that $\dim V \geq n$, and let $(M, F)$ be a framed submanifold
of $V$ which represents $\PT(f)$. By a standard transversality argument,
we obtain a $\C^{\infty}$ map $\varphi \colon V \to \R^n$ such that
$\varphi$ is transverse to $0 \in \R^n$, $M$ is the union of some
connected components of $\varphi^{-1}(0)$, and $(M,F)$ is framed
cobordant to $(M, F(\varphi))$. In view of the Weierstrass approximation
theorem, there exists a regular map $\psi \colon V \to \R^n$ arbitrarily
close to $\varphi$ in the $\C^{\infty}$ topology. Then $\psi$ is
transverse to $0 \in \R^n$ and $Z \coloneqq \psi^{-1}(0)$ is isotopic to
$\varphi^{-1}(0)$, cf. \cite[p.~51]{bib1}. In particular, $(M,
F(\varphi))$ is framed cobordant to $(N, F(\psi))$, where $N$ is the
union of suitable connected components of $Z$. Furthermore, $Z \subseteq
V$ is a nonsingular Zariski closed subvariety. Set $K \coloneqq Z
\setminus N$, and let $T$, $\alpha$ be as in Lemma~\ref{lem-7-1} for
some nonnegative integer $l$. According to the Łojasiewicz inequality
\cite[Corollary~2.6.7]{bib6}, we can find an open neighborhood $U
\subseteq V$ of $K$, a real number $c > 0$, and an integer $q > 0$ such
that
\begin{equation*}
\norm{\psi(x)} \geq c\alpha(x)^{2q}\ \textrm{for all} \ x \in U,
\end{equation*}
where $\norm{-}$ stands for the Euclidean norm on $\R^n$.

Set $a \coloneqq (0, \ldots, 0, 1) \in \SB^n$, $b \coloneqq -a$, and let $\rho
\colon \SB^n \setminus \{a\} \to \R$ be the stereographic projection.
Fix a nonnegative integer $k$. If $l$ and $r$ are sufficiently large
integers, then the map $g \colon V \to \SB^n$ defined by
\begin{equation*}
g(x) =
\begin{cases}
\rho^{-1}(\frac{\psi(x)}{\alpha(x)^{2(q+r)}}) & \textrm{for} \ x \in V
\setminus K\\
a & \textrm{for} \ x \in K
\end{cases}
\end{equation*}
is of class $\C^k$. Since $\rho$ is a biregular isomorphism, the
restrictions $\restr{g}{T \setminus K}$ and $\restr{g}{\Int T}$ are
regular maps. Consequently, $\SC \coloneqq \{Z, \partial T, V \setminus
(Z \cup \partial T)\}$ is a stratification of $V$, and the map $g$ is
piecewise $\SC$-regular.

It remains to prove that $g$ is homotopic to $f$ or, equivalently,
$\PT(g) = \PT(f)$. To this end, let $B$ be the basis for $T_b\SB^n$
which corresponds to the canonical basis for $\R^n$ via the isomorphism
$d_b \rho \colon T_b\SB^n \to \R^n$. Note that the restriction of $g$ to
$g^{-1}(\SB^n \setminus \{a\}) = V \setminus K$ is a $\C^{\infty}$ map
transverse to $b \in \SB^n$ and $g^{-1}(b) = N$. It readily follows that
$(N, F(g,B))$ is framed cobordant to $(N, F(\varphi))$. Hence $(N,
F(g,B))$ is framed cobordant to $(M,F)$, which implies that $\PT(g)
= \PT(f)$, as required.
\end{proof}

\begin{remark}\label{rem-7-2}
As demonstrated above, if $\dim V \geq n$, then the stratification $\SC$
that appears in Theorem~\ref{th-1-6} can be chosen of the form
\begin{equation*}
\SC = \{Z, W, V \setminus (Z \cup W)\},
\end{equation*}
where $Z$ and $W$ are disjoint Zariski closed subvarieties of $V$ with
$\codim_V Z = n$ and $\codim_V W = 1$.
\end{remark}

%\cleardoublepage
\phantomsection
\addcontentsline{toc}{section}{\refname}

\end{document}